\documentclass{article}



\PassOptionsToPackage{numbers}{natbib}
\usepackage[final]{neurips_2019}


\synctex=1

\usepackage[dvipsnames]{xcolor}
\usepackage{amsthm}

\newtheorem*{rep@theorem}{\rep@title}
\newcommand{\newreptheorem}[2]{%
\newenvironment{rep#1}[1]{%
 \def\rep@title{#2 \ref{##1}}%
 \begin{rep@theorem}}%
 {\end{rep@theorem}}}
\newreptheorem{lemma}{Lemma'}
\newreptheorem{theorem}{Theorem'}
\newreptheorem{corollary}{Corollary'}
\newtheorem{theorem}{Theorem}
\newtheorem{lemma}[theorem]{Lemma}
\newtheorem{corollary}[theorem]{Corollary}

\newcommand{\R}{\mathbb{R}}
\newcommand{\trace}{\operatorname{tr}}

\newcommand{\norm}[1]{\Vert #1 \Vert}
\newcommand{\ip}[2]{\big\langle #1, \, #2 \big\rangle}
\newcommand{\prox}[1]{\mathrm{prox}_{#1}}


\usepackage{footmisc}
\usepackage{graphicx}
\usepackage{booktabs}
\usepackage{url}
\usepackage{listings}
\usepackage{hyperref}
\usepackage{bbm}
\usepackage{enumitem}
\usepackage{amssymb}
\usepackage{caption}%
\usepackage{subcaption}
\usepackage{adjustbox}
\usepackage{wrapfig}

\usepackage{algorithm}
\usepackage{algorithmic}

\usepackage{amsmath}

\def\bbE{{\mathbb{E}}}

\title{Stochastic Frank-Wolfe for \\Composite Convex Minimization}

%

\author{
Francesco Locatello$^\star$$\qquad$
Alp Yurtsever$^{\dagger}$$\qquad$
Olivier Fercoq$^\ddagger$$\qquad$
Volkan Cevher$^{\dagger}$\\[1em]
\texttt{francesco.locatello}@\texttt{inf.ethz.ch} \\
\texttt{\{alp.yurtsever,volkan.cevher\}}@\texttt{epfl.ch} \\
\texttt{olivier.fercoq}@\texttt{telecom-paristech.fr} \\[1em]
$^\star$Department of Computer Science, ETH Zurich, Switzerland \\
$^\dagger$LIONS, Ecole Polytechnique F\'ed\'erale de Lausanne, Switzerland \\
$^\ddagger$LTCI, T\'el\'ecom Paris, Universit\'e Paris-Saclay, France 
}

\begin{document}

\maketitle

\begin{abstract}
A broad class of convex optimization problems can be formulated as a semidefinite program (SDP), minimization of a convex function over the positive-semidefinite cone subject to some affine constraints. The majority of classical SDP solvers are designed for the deterministic setting where problem data is readily available. In this setting, generalized conditional gradient methods (aka Frank-Wolfe-type methods) provide scalable solutions by leveraging the so-called linear minimization oracle instead of the projection onto the semidefinite cone. Most problems in machine learning and modern engineering applications, however, contain some degree of stochasticity. In this work, we propose the first conditional-gradient-type method for solving stochastic optimization problems under affine constraints. Our method guarantees $\mathcal{O}(k^{-1/3})$ convergence rate in expectation on the objective residual and $\mathcal{O}(k^{-5/12})$ on the feasibility gap. 
\end{abstract}

\section{Introduction}
We focus on the following stochastic convex composite optimization template, which covers finite sum and online learning problems:
\begin{equation}
\label{eqn:main-template}
\tag{P}
\underset{x\in\mathcal{X}}{\text{minimize}} \quad \bbE_\Omega  f(x,\omega) +g(Ax) := F(x).
\end{equation}
In this optimization template, we consider the following setting: \\
$\triangleright~~\mathcal{X} \subset \R^n$ is a convex and compact set, \\
$\triangleright~~\omega$ is a realization of the random variable $\Omega$ drawn from the distribution $\mathcal{P}$, \\
$\triangleright~~\bbE_\Omega  f(\, \cdot \, ,\omega):\mathcal{X} \to \R$ is a smooth (see Section~\ref{sec:prelim} for the definition) convex function, \\
$\triangleright~~A \in \R^n \to R^d$ is a given linear map, \\
$\triangleright~~g:\R^d \to \R\cup\{+\infty\}$ is a convex function (possibly non-smooth). 

We consider two distinct specific cases for $g$: \\
(i) $g$ is a Lipschitz-continuous function, for which the proximal-operator is easy to compute:
\begin{equation}
\prox{g}(y) = \arg \min_{z \in \R^d} ~ g(z) + \frac{1}{2} \norm{z - y}^2
\end{equation}
(ii) $g$ is the indicator function of a convex set $\mathcal{K} \subset \R^d$:
\begin{equation}
g(z) = \begin{cases} 
0 \quad \text{if}~~z \in \mathcal{K}, \\
+\infty \quad \text{otherwise.} 
\end{cases}
\end{equation}
The former covers the regularized optimization problems. This type of regularization is common in machine learning applications to promote a desired structure to the solution. The latter handles affine constraints of the form $Ax \in \mathcal{K}$. We can also attack the combination of both: the minimization of a regularized loss-function subject to some affine constraints. 

In this paper, we propose a conditional-gradient-type method (\textit{aka} Frank-Wolfe-type) for \eqref{eqn:main-template}. 
In summary, our main contributions are as follows:
\begin{itemize}[leftmargin=*,itemsep=-0.1em,topsep=0pt]
\item[$\triangleright$] We propose the first CGM variant for solving \eqref{eqn:main-template}. By CGM variant, we mean that our method avoids projection onto $\mathcal{X}$ and uses the \ref{eqn:lmo} of $\mathcal{X}$ instead. The majority of the known methods for \eqref{eqn:main-template} require projections onto $\mathcal{X}$.
\item[$\triangleright$] We prove $\mathcal{O}(k^{-1/3})$ convergence rate on objective residual when $g$ is Lipschitz-continuous.
\item[$\triangleright$] We prove $\mathcal{O}(k^{-1/3})$ convergence rate on objective residual, and $\mathcal{O}(k^{-5/12})$ on feasibility gap when $g$ is an indicator function. Surprisingly, affine constraints that make the \ref{eqn:lmo} challenging for existing CGM variants can be easily incorporated in this framework by using smoothing. 
\item[$\triangleright$] We provide empirical evidence that validates our theoretical findings. Our results highlight the benefits of our framework against the projection-based algorithms.\\
\end{itemize}

\subsection{Motivation: Stochastic Semidefinite Programming} 
\label{sec:motiv}

Consider the following stochastic semidefinite programming template, minimization of a convex function over the positive-semidefinite cone subject to some affine constraints:
\begin{equation}\label{eqn:stochastic-sdp}
\underset{{X \in \mathbb{S}_+^n, ~ \trace(X) \leq \beta}}{\text{minimize}} \quad \bbE_\Omega  f(X,\omega) \quad \text{subject to} \quad AX \in \mathcal{K}.
\end{equation}
Here,  $\mathbb{S}_+^n $ denotes the positive-semidefinite cone. We are interested in solving \eqref{eqn:stochastic-sdp} rather than the classical SDP since it does not require access to the whole data at one time. This creates a new vein of SDP applications in machine learning. Examples span online variants of clustering \cite{Peng2007}, streaming PCA \cite{dAspremont2004}, kernel learning \cite{Lanckriet2004}, community detection \cite{Abbe2018}, optimal power-flow \cite{Madani2015}, etc. 

\paragraph{Example: Clustering.} Consider the SDP formulation of the k-means clustering problem \cite{Peng2007}:
\begin{align}\label{eqn:clustering}
\underset{{X \in \mathbb{S}_+^n, ~ \trace(X) = k}}{\text{minimize}} \quad \ip{D}{X} \quad \text{subject to} \quad X1_n = 1_n, \quad X \geq 0.
\end{align}
Here, $1_n$ denotes the vector of ones, $X \geq 0$ enforces entrywise non-negativity, and $D$ is the Euclidean distance matrix. Classical SDP solvers assume that we can access to the whole data matrix $D$ at each time instance. By considering \eqref{eqn:stochastic-sdp}, we can solve this problem using only a subset of entries of $D$ at each iteration. Remark that a subset of entries of $D$ can be computed form a subset of the datapoints, since $D$ is the Euclidean distance matrix.

We can attack \eqref{eqn:main-template}, and \eqref{eqn:stochastic-sdp} as a special case, by using operator splitting methods, assuming that we can efficiently project a point onto $\mathcal{X}$ (see \cite{Cevher2018sfdr} and the references therein). However, projection onto semidefinite cone might require a full eigendecomposition, which imposes a computational bottleneck (with its cubic cost) even for medium scaled problems with a few thousand dimensions.

When affine constraints are absent from the formulation \eqref{eqn:stochastic-sdp}, we can use stochastic CGM variants from the literature. The main workhorse of these methods is the so-called linear minimization oracle:
\begin{equation}\tag{\textit{lmo}}\label{eqn:lmo}
S = \arg\min_{Y} 
~ \left \{ \ip{\nabla f (X,\omega)}{Y} :  \quad Y \in \mathbb{S}_+^n,~ \trace(Y) \leq \beta \right \}
\end{equation}
We can compute $S$ if we can find an eigenvector that corresponds to the smallest eigenvalue of $\nabla f (X,\omega)$. We can compute these eigenvectors efficiently by using shifted power methods or the randomized subspace iterations \cite{HMT11:Finding-Structure}. When we also consider affine constraints in our problem template, however, \ref{eqn:lmo} becomes an SDP instance in the canonical form. In this setting, neither projection nor \ref{eqn:lmo} is easy to compute. 
To our knowledge, no existent CGM variant is effective for solving \eqref{eqn:stochastic-sdp} (and \eqref{eqn:main-template}). We specifically bridge this gap. 

\clearpage

\subsection{Notation and Preliminaries}
\label{sec:prelim}

We denote the expectation with respect to the random variable $\Omega$ by $\bbE_\Omega$, and the expectation wrt the sources of randomness in the optimization simply by $\bbE$. Furthermore we denote $f^\star := \bbE_\Omega f(x^\star, \omega)$ where $x^\star$ is the solution of~\eqref{eqn:main-template}. Throughout the paper, $y^\star$ represents the solution of the dual problem of~\eqref{eqn:main-template}. We assume that strong duality holds. Slater's condition is a common sufficient condition for strong duality that implies existence of a solution of the dual problem with finite norm.

\textbf{Solution.} We denote a solution to \eqref{eqn:main-template} and the optimal value by $x^\star$ and $F^\star$ respectively:
\begin{equation}
F^\star = F(x^\star) \leq F(x), \qquad \forall x \in \mathcal{X}.
\end{equation}
We say $x^\star_\epsilon \in \mathcal{X}$ is an $\epsilon$-suboptimal solution (or simply an $\epsilon$-solution) if and only if
\begin{equation}
F(x^\star_\epsilon) - F^\star \leq \epsilon.
\end{equation}

\textbf{Stochastic first-order oracle (sfo).} 
For the stochastic function $\bbE_\Omega f(x, \omega)$, suppose that we have access to a stochastic first-order oracle that returns a pair $(f(x,\omega),\nabla f(x,\omega))$ given $x$, where $\omega$ is an \textit{iid} sample from distribution $\mathcal{P}$.

\textbf{Lipschitz continuity \& Smoothness.} A function $g:\R^d \to \R$ is $L$-Lipschitz continuous if 
\begin{equation}
| g(z^1) - g(z^2) | \leq L \norm{z^1- z^2}, \qquad  \forall z^1, z^2 \in \R^d.
\end{equation}
A differentiable function $f$ is said to be $L$-smooth if the gradient $\nabla f$ is $L$-Lipschitz continuous. 

\section{Stochastic Homotopy CGM}
\label{sec:algorithm}
\begin{wrapfigure}{r}{0.5\textwidth} 
\vspace{-2.25em}
\hfill
\begin{minipage}{0.48\textwidth}
\begin{algorithm}[H]
   \caption{SHCGM}
   \label{alg:stochastic_HFW}
\begin{algorithmic}
   \STATE {\bfseries Input:} $x_1 \in \mathcal{X}, ~ \beta_0 > 0$, $d_0 = 0$
   \FOR{$k=1,2, \ldots, $}
   \STATE $\eta_k = 9/(k+8)$
   \STATE $\beta_k =\beta_0 / (k+8)^{\frac{1}{2}}$
   \STATE $\rho_k = 4/(k+7)^{\frac{2}{3}}$
   \STATE $d_k = (1-\rho_k)d_{k-1} + \rho_k\nabla_x f(x_k,\omega_k)$
   \STATE $v_k = d_k + \beta_k^{-1} A^\top \big(A x_k -  \prox{\beta_k g}(Ax_k)\big)$
	\STATE $s_k = \arg\min_{x \in \mathcal{X}}\ip{v_k}{x}$
   	\STATE $x_{k+1} = x_k + \eta_k(s_k - x_k)$
   \ENDFOR
\end{algorithmic}
\end{algorithm}
\end{minipage}
\vspace{-1em}
\end{wrapfigure}

Most stochastic CGM variants require minibatch size to increase, in order to reduce the variance of the gradient estimator. 
However, Mokhtari et al., \cite{mokhtari2018stochastic} have recently shown that the following (biased) estimator (that can be implemented with a single sample) can be incorporated with the CGM analysis:
\begin{equation}\label{eqn:grad-est}
d_k = (1-\rho_k)d_{k-1} + \rho_k\nabla_x f(x_k,\omega_k)
\end{equation}
The resulting method guarantees $\mathcal{O}({1}/{k^{\frac{1}{3}}})$ convergence rate for convex smooth minimization, but it does not apply to our composite problem template \eqref{eqn:main-template}. 

On the other hand, we introduced a CGM variant for composite problems (also covers affine constraints) in the deterministic setting in our prior work \cite{yurtsever2018conditional}. Our framework combines Nesterov smoothing \cite{Nesterov2005} (and the quadratic penalty for affine constraints) with the CGM analysis. Unfortunately, this method does not work for stochastic problems. 

In this paper, we propose the Stochastic Homotopy Conditional Gradient Method (SHCGM) for solving \eqref{eqn:main-template}. The proposed method combines the stochastic CGM of \cite{mokhtari2018stochastic} with our (deterministic) CGM for composite problems \cite{yurtsever2018conditional} in a non-trivial way.

Remark that the following formulation uniformly covers the Nesterov smoothing (with the Euclidean prox-function $\frac{1}{2}\norm{\cdot}^2$) and the quadratic penalty (but the analyses for these two cases differ):
\begin{equation}
g_\beta (z) = \max_{y \in \R^d} \ip{z}{y} - g^*(y) - \frac{\beta}{2} \norm{y}^2, \quad \text{where} \quad g^\ast (x) = \max_{v \in \R^d} \ip{x}{v} - g(v).
\end{equation}
We call $g_\beta$ as the smooth approximation of $g$, parametrized by the penalty (or smoothing) parameter $\beta > 0$. It is easy to show that $g_\beta$ is $1/\beta$-smooth. Remark that the gradient of $g_\beta$ can be computed by the following formula:
\begin{equation}
\nabla_x g_{\beta}(Ax) = A^\top \prox{\beta^{-1}g^*}(\beta^{-1} Ax) = \beta^{-1} A^\top \left( Ax - \prox{\beta g}(Ax) \right),
\end{equation}
where the second equality follows from the Moreau decomposition. 

The main idea is to replace the non-smooth component $g$ by the smooth approximation $g_\beta$ in \eqref{eqn:main-template}. Clearly the solutions for \eqref{eqn:main-template} with $g(Ax)$ and $g_\beta(Ax)$ do not coincide for any value of $\beta$. However, $g_\beta \to g$ as $\beta \to 0$. Hence, we adopt a homotopy technique: We decrease $\beta$ at a controlled rate as we progress in the optimization procedure, so that the decision variable converges to a solution of the original problem.  

SHCGM is characterized by the following iterative steps:\\[0.1em]
$\triangleright~$ Decrease the step-size, smoothing and gradient averaging parameters $\eta_k,~\beta_k$ and $\rho_k$.\\[0.1em]
$\triangleright~$ Call the stochastic first-order oracle and compute the gradient estimator $d_k$ in \eqref{eqn:grad-est}.\\[0.1em]
$\triangleright~$ Compute the gradient estimator $v_k$ for the smooth approximation of the composite objective,
\begin{equation}
\begin{aligned}
F_{\beta_k}(x) = \bbE_\Omega  f(x,\omega) +g_{\beta_k}(Ax)  \quad \implies \quad
v_k & = d_k + \nabla_x g_{\beta_k}(Ax).
\end{aligned}
\end{equation}
$\triangleright~$ Compute the \ref{eqn:lmo} with respect to $v_k$. \\[0.1em]
$\triangleright~$ Perform a CGM step to find the next iterate.

The roles of $\rho_k$ and $\beta_k$ are coupled. The former controls the variance of the gradient estimator, and the latter decides how fast we reduce the smoothing parameter to approach to the original problem. A carefully tuned interaction between these two parameters allows us to prove the following convergence rates.

\textbf{Assumption (Bounded variance).} We assume the following bounded variance condition holds:
\begin{equation}
\bbE\left[\norm{ \nabla_x f(x,\omega) - \nabla_x \bbE_\Omega f(x, \omega)}^2 \right]\leq \sigma^2 < +\infty.
\end{equation}

\begin{theorem}[Lipschitz-continuous regularizer]\label{cor:g_lip}
Assume that $g: \R^d \to \R$ is $L_g$-Lipschitz continuous. 
Then, the sequence $x_k$ generated by Algorithm~\ref{alg:stochastic_HFW} satisfies the following convergence bound: 
\begin{equation}
\bbE F(x_{k+1}) - F^\star  
\leq 9^\frac{1}{3} \frac{C}{(k+8)^\frac{1}{3}} + \frac{\beta_0 L_g^2}{2\sqrt{k+8}},
\end{equation}
where $C := \frac{81}{2}D_{\mathcal{X}}^2(L_f + \beta_0\norm{A}^2) + 36\sigma D_\mathcal{X} + 27\sqrt{3}L_fD^2_\mathcal{X}$.
\end{theorem}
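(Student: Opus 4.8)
The plan is to run the conditional-gradient descent estimate on the \emph{smoothed} objective $F_{\beta_k}$, to control the bias $e_k := \nabla_x\bbE_\Omega f(x_k,\omega) - d_k$ of the averaged gradient by a separate recursion, and to let $\beta_k \downarrow 0$ along the iterations (the homotopy). Since $g$ is $L_g$-Lipschitz, $\operatorname{dom} g^\star \subseteq \{y : \norm{y} \le L_g\}$; evaluating the max's that define $g_{\beta}$ and $g_{\beta'}$ at each other's maximizers gives $g_\beta(z) \le g(z) \le g_\beta(z) + \tfrac{\beta}{2}L_g^2$ and $g_{\beta_{k+1}}(z) \le g_{\beta_k}(z) + \tfrac{\beta_k - \beta_{k+1}}{2}L_g^2$. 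As $g_\beta$ is $1/\beta$-smooth, $F_{\beta_k}$ is $(L_f + \norm{A}^2/\beta_k)$-smooth and $F_{\beta_k}(x^\star) \le F^\star$. Combining the descent lemma for $F_{\beta_k}$ at the step $x_{k+1} = x_k + \eta_k(s_k - x_k)$ with $\norm{s_k - x_k} \le D_\mathcal{X}$, the identity $\nabla F_{\beta_k}(x_k) = v_k + e_k$ (the $g_{\beta_k}$-part of $\nabla F_{\beta_k}$ is exactly what is added to $d_k$ to form $v_k$), optimality of $s_k$ for the \ref{eqn:lmo} tested against $x^\star$, convexity of $F_{\beta_k}$, and finally the homotopy inequality above to pass from $\beta_k$ to $\beta_{k+1}$ on the left, one obtains, with $h_k := F_{\beta_k}(x_k) - F^\star$,
\[ \bbE h_{k+1} \le (1-\eta_k)\,\bbE h_k + \eta_k D_\mathcal{X}\,\bbE\norm{e_k} + \tfrac{\eta_k^2}{2}\big(L_f + \tfrac{\norm{A}^2}{\beta_k}\big)D_\mathcal{X}^2 + \tfrac{\beta_k - \beta_{k+1}}{2}L_g^2 . \]

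Next I bound $\bbE\norm{e_k}$ by the momentum-estimator argument of \cite{mokhtari2018stochastic}. Expanding $d_k - \nabla\bbE_\Omega f(x_k)$, using that $\nabla f(x_k,\omega_k) - \nabla\bbE_\Omega f(x_k)$ is mean-zero given the past with second moment $\le \sigma^2$, the $L_f$-smoothness of $\bbE_\Omega f$ together with $\norm{x_k - x_{k-1}} \le \eta_{k-1}D_\mathcal{X}$, and Young's inequality, gives $\bbE\norm{e_k}^2 \le (1-\rho_k)\,\bbE\norm{e_{k-1}}^2 + \rho_k^{-1}L_f^2 D_\mathcal{X}^2\eta_{k-1}^2 + \rho_k^2\sigma^2$. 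With $\rho_1 = 1$ (so $\bbE\norm{e_1}^2 \le \sigma^2$), $\rho_k = 4(k+7)^{-2/3}$ and $\eta_{k-1} = 9(k+7)^{-1}$, both added terms are $\mathcal{O}((k+7)^{-4/3})$, and a short induction yields $\bbE\norm{e_k}^2 \le Q(k+8)^{-2/3}$ for an explicit $Q = \mathcal{O}(\sigma^2 + L_f^2 D_\mathcal{X}^2)$; Jensen then gives $\bbE\norm{e_k} \le \sqrt{Q}\,(k+8)^{-1/3}$.

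Substituting $\eta_k = 9(k+8)^{-1}$, $\beta_k = \beta_0(k+8)^{-1/2}$ and this variance bound, every perturbation term on the right is $\mathcal{O}((k+8)^{-4/3})$ for $k \ge 1$ (the $\eta_k^2 L_f$, $\eta_k^2\norm{A}^2/\beta_k$ and $(\beta_k - \beta_{k+1})L_g^2$ pieces are in fact $\mathcal{O}((k+8)^{-3/2})$ or faster), so the recursion collapses to $\bbE h_{k+1} \le (1 - \tfrac{9}{k+8})\,\bbE h_k + \widetilde C(k+8)^{-4/3}$ with $\widetilde C$ explicit. Since $\eta_1 = 1$ annihilates $\bbE h_1$, the recursion is effectively seeded at $k = 2$ by the size of the $k=1$ perturbation terms; an induction using $(k+8)^{-1/3} - (k+9)^{-1/3} \le \tfrac13(k+8)^{-4/3}$ then gives $\bbE h_{k+1} \le 9^{1/3}C(k+8)^{-1/3}$ with $C$ as in the statement. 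Finally $F(x_{k+1}) \le F_{\beta_{k+1}}(x_{k+1}) + \tfrac{\beta_{k+1}}{2}L_g^2$ and $\beta_{k+1} \le \beta_0(k+8)^{-1/2}$ produce the claimed two-term bound.

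The main obstacle is matching the schedules $\rho_k$ and $\beta_k$ to $\eta_k$. The estimator error must decay like $(k+8)^{-2/3}$, which pins $\rho_k$ to $\Theta(k^{-2/3})$: decaying faster re-inflates the variance through the $\rho_k^{-1}\eta_{k-1}^2$ term, decaying slower weakens the $(1-\rho_k)$ contraction; this forces $\eta_k D_\mathcal{X}\bbE\norm{e_k} = \Theta(k^{-4/3})$, precisely the largest perturbation a $(1 - \Theta(k^{-1}))$-contraction tolerates while still producing a $k^{-1/3}$ rate. Simultaneously $\beta_k$ must sit in the window $\beta_0 k^{-\alpha}$, $\alpha \in [\tfrac13,\tfrac23]$, so that the smoothing bias $\tfrac{\beta_k}{2}L_g^2$ is no worse than $k^{-1/3}$ while the extra smoothness $\eta_k^2\norm{A}^2/\beta_k$ stays below $k^{-4/3}$; the choice $\alpha = \tfrac12$ lands in the middle. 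Everything after that is bookkeeping of constants.
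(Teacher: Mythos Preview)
Your argument is correct and follows the same overall strategy as the paper (descent lemma on $F_{\beta_k}$, control of $\bbE\norm{e_k}^2$ via the momentum recursion of \cite{mokhtari2018stochastic}, solve the resulting one-step recursion, then un-smooth via $g\le g_\beta+\tfrac{\beta}{2}L_g^2$). The one substantive difference is in how the homotopy step is absorbed. You pass from $F_{\beta_k}(x_{k+1})$ to $F_{\beta_{k+1}}(x_{k+1})$ by the crude bound $g_{\beta_{k+1}}(z)\le g_{\beta_k}(z)+\tfrac{\beta_k-\beta_{k+1}}{2}L_g^2$, which adds a harmless $\mathcal{O}((k+8)^{-3/2})$ term but puts an $L_g^2$-dependent piece into the recursion constant (so you do not recover the stated $C$ exactly). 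The paper instead uses the sharpened convexity inequality $g(z_1)\ge g_\beta(z_2)+\langle\nabla g_\beta(z_2),z_1-z_2\rangle+\tfrac{\beta}{2}\norm{y^\ast_\beta(z_2)}^2$ to produce a \emph{negative} $\tfrac{\eta_k\beta_k}{2}\norm{y^\ast_{\beta_k}(Ax_k)}^2$ term, and the step sizes are chosen so that $(1-\eta_k)(\beta_{k-1}-\beta_k)-\eta_k\beta_k<0$, which exactly cancels the positive $\norm{y^\ast_{\beta_k}}^2$ contribution from the homotopy shift $F_{\beta_k}(x_k)\le F_{\beta_{k-1}}(x_k)+\tfrac{\beta_{k-1}-\beta_k}{2}\norm{y^\ast_{\beta_k}}^2$. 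What this buys the paper is a bound on the smoothed gap $\bbE F_{\beta_k}(x_{k+1})-F^\star$ that is \emph{independent of} $L_g$ and therefore holds verbatim when $g$ is an indicator function (where $\norm{y^\ast_\beta}$ is unbounded); this is exactly what is reused for Theorem~\ref{cor:indicator-exact}. Your route is a bit more elementary but is specific to the Lipschitz case.
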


\textit{Proof sketch.}
The proof follows the following steps: \\
(i) Relate the stochastic gradient to the full gradient (Lemma~\ref{lemma:linear_exact_additive}). \\
(ii) Show convergence of the gradient estimator to the full gradient (Lemma~\ref{lemma:stoch_gradient_convergence}). \\
(iii) Show $\mathcal{O}(1/k^{\frac{1}{3}})$ convergence rate on the smooth gap $\bbE F_{\beta_k}(x_{k+1}) - F^\star$ (Theorem~\ref{thm:composite}). \\
(iv) Translate this bound to the actual sub-optimality $\bbE F(x_{k+1}) - F^\star$ by using the envelope property for Nesterov smoothing, see Equation (2.7) in \cite{Nesterov2005}. 
\hfill$\square$

Convergence rate guarantees for stochastic CGM with Lipschitz continuous $g$  (also based on Nesterov smoothing) are already known in the literature, see \cite{Hazan2012,lan2017conditional,Lan2016} for examples. Our rate is not faster than the ones in \cite{lan2017conditional,Lan2016}, but we obtain $\mathcal{O}(\frac{1}{\epsilon^3})$ sample complexity in the statistical setting as opposed to $\mathcal{O}(\frac{1}{\epsilon^4})$. 

In contrast with the existing stochastic CGM variants, our algorithm can also handle affine constraints. Remark that the indicator functions are not Lipschitz continuous, hence the Nesterov smoothing technique does not work for affine constraints. 

\textbf{Assumption (Strong duality).} For problems with affine constraints, we further assume that the strong duality holds. 
Slater's condition is a common sufficient condition for strong duality. 
By Slater's condition, we mean
\begin{equation}
\mathrm{relint} (\mathcal{X} \times \mathcal{K})  ~\cap~ \left\{ (x,r) \in \R^n \times \R^d : Ax = r \right\} \neq \emptyset.
\end{equation}
Recall that the strong duality ensures the existence of a finite dual solution.

\begin{theorem}[Affine constraints]\label{cor:indicator-exact}
Suppose that $g: \R^d \to \R$ is the indicator function of a simple convex set $\mathcal{K}$. 
Assuming that the strong duality holds, the sequence $x_k$ generated by SHCGM satisfies\\
\begin{equation}
\begin{aligned}
\bbE \bbE_\Omega f(x_{k+1}, \omega) -  f^\star  & \geq -\norm{y^\star} ~\bbE  \mathrm{dist}(Ax_{k+1},\mathcal{K}) \\
\bbE \bbE_\Omega f(x_{k+1}, \omega) -  f^\star & \leq  9^\frac{1}{3} \frac{C}{(k+8)^\frac{1}{3}} \\
\bbE \mathrm{dist}(Ax_{k+1}, \mathcal{K}) & \leq \frac{2 \beta_0 \norm{y^\star} }{\sqrt{k+8}} +\frac{2\sqrt{2\cdot9^\frac13 C\beta_0}}{(k+8)^{\frac{5}{12}}}
\end{aligned}
\end{equation}
\end{theorem}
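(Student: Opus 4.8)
\textit{Proof plan.} Since $g$ is the indicator of $\mathcal{K}$, its smooth approximation is the squared-distance penalty $g_{\beta}(Ax) = \tfrac{1}{2\beta}\dist(Ax,\mathcal{K})^2$ (the conjugate of $\sigma_{\mathcal{K}} + \tfrac{\beta}{2}\norm{\cdot}^2$, by the infimal-convolution/Moreau identity), and moreover $F^\star = f^\star$ because $Ax^\star \in \mathcal{K}$. Thus Theorem~\ref{thm:composite}, invoked with the schedule of Algorithm~\ref{alg:stochastic_HFW}, already delivers a single master inequality: writing $S_k := 9^{1/3}C/(k+8)^{1/3}$ for the smooth-gap bound of that theorem,
\begin{equation}
\bbE\, \bbE_\Omega f(x_{k+1},\omega) - f^\star + \frac{1}{2\beta_k}\, \bbE\, \dist(Ax_{k+1},\mathcal{K})^2 \;\le\; S_k .
\end{equation}
All three claims follow from this inequality plus one duality estimate. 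The objective upper bound is immediate, since the penalty term is nonnegative and may simply be dropped: $\bbE\, \bbE_\Omega f(x_{k+1},\omega) - f^\star \le S_k$.

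For the objective lower bound I would use Lagrangian duality for the reformulation $\min_{x\in\mathcal{X},\, r\in\mathcal{K}}\{\bbE_\Omega f(x,\omega) : Ax = r\}$, whose dual function is $d(y) = \min_{x\in\mathcal{X}}\{\bbE_\Omega f(x,\omega) + \ip{y}{Ax}\} - \sigma_{\mathcal{K}}(y)$. Strong duality together with finiteness of $y^\star$ (both guaranteed by the Slater assumption) gives, for any $x_{k+1}\in\mathcal{X}$,
\begin{equation}
\begin{aligned}
f^\star = d(y^\star) &\le \bbE_\Omega f(x_{k+1},\omega) + \ip{y^\star}{Ax_{k+1}} - \sigma_{\mathcal{K}}(y^\star) \\
&\le \bbE_\Omega f(x_{k+1},\omega) + \ip{y^\star}{Ax_{k+1} - \proj{\mathcal{K}}(Ax_{k+1})},
\end{aligned}
\end{equation}
where the last step uses $\sigma_{\mathcal{K}}(y^\star) = \max_{v\in\mathcal{K}}\ip{y^\star}{v} \ge \ip{y^\star}{\proj{\mathcal{K}}(Ax_{k+1})}$. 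Applying Cauchy--Schwarz and then taking expectations yields $\bbE\, \bbE_\Omega f(x_{k+1},\omega) - f^\star \ge -\norm{y^\star}\, \bbE\, \dist(Ax_{k+1},\mathcal{K})$, the first claim.

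For the feasibility gap, write $a := \bbE\, \dist(Ax_{k+1},\mathcal{K}) \ge 0$. Substituting the lower bound into the master inequality and using Jensen's inequality $a^2 \le \bbE\, \dist(Ax_{k+1},\mathcal{K})^2$ produces the scalar quadratic inequality $\tfrac{1}{2\beta_k}a^2 - \norm{y^\star}\,a - S_k \le 0$. Solving for $a$ and using $\sqrt{u+v}\le \sqrt{u}+\sqrt{v}$ gives $a \le 2\beta_k\norm{y^\star} + \sqrt{2\beta_k S_k}$; plugging in $\beta_k = \beta_0(k+8)^{-1/2}$ and $S_k = 9^{1/3}C(k+8)^{-1/3}$ then yields the claimed $O((k+8)^{-1/2})$ and $O((k+8)^{-5/12})$ terms, the exponent $5/12 = \tfrac12\big(\tfrac12 + \tfrac13\big)$ arising from the product $\beta_k S_k$ (with the numerical constants obtained by direct substitution).

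The one genuinely delicate point is the duality estimate: the Lagrangian must be set up so that strong duality applies and $y^\star$ has finite norm, which is exactly the role of the Slater/strong-duality assumption; everything else is elementary once Theorem~\ref{thm:composite} is available. The structural reason the argument works is that the penalty enters the master inequality with weight $1/\beta_k$, so a decay $\beta_k \sim (k+8)^{-1/2}$ — slow enough relative to $S_k$ to extract feasibility from the quadratic inequality, yet contributing only an $O((k+8)^{-1/2})$ term itself — converts control of the smoothed gap into control of the feasibility gap. This is precisely the coupling between $\rho_k$ and $\beta_k$ flagged in the algorithm discussion.
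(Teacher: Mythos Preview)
Your proposal is correct and follows essentially the same route as the paper: invoke Theorem~\ref{thm:composite} to get the master inequality on the smoothed gap, drop the nonnegative penalty for the objective upper bound, derive the lower bound from the Lagrangian saddle-point inequality (your support-function formulation with $r = \proj{\mathcal{K}}(Ax_{k+1})$ is exactly the paper's choice of $r\in\mathcal{K}$), and combine the two into a scalar quadratic in $\bbE\,\dist(Ax_{k+1},\mathcal{K})$ via Jensen. Your explicit appeal to Jensen actually fills in a step the paper leaves implicit, and your quadratic solution $a \le 2\beta_k\norm{y^\star} + \sqrt{2\beta_k S_k}$ yields a constant in the second term that is a factor of $2$ tighter than the stated bound, so the claimed inequality follows a fortiori.
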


\textit{Proof sketch.}
We re-use the ingredients of the proof of Theorem~\ref{cor:g_lip}, except that at step (iv) we translate the bound on the smooth gap (penalized objective) to the actual convergence measures (objective residual and feasibility gap) by using the Lagrange saddle point formulations and the strong duality. See Corollaries~\ref{cor:g_lip} and~\ref{cor:indicator-exact}.  
\hfill$\square$

\textbf{Remark (Comparison to baseline).} SHCGM combines ideas from \cite{mokhtari2018stochastic} and \cite{yurtsever2018conditional}. Surprisingly, \\
$\triangleright$~$\mathcal{O}(1/k^{\frac{1}{3}})$ rate in objective residual matches the rate in~\cite{mokhtari2018stochastic} for smooth minimization. \\
$\triangleright$~$\mathcal{O}(1/k^{\frac{5}{12}})$ rate in feasibility gap is only an order of $k^{\frac{1}{12}}$ worse than the deterministic variant in~\cite{yurtsever2018conditional}. 

\textbf{Remark (Inexact oracles).}
We assume to use the exact solutions of \ref{eqn:lmo} in SHCGM in Theorems~\ref{cor:g_lip} and \ref{cor:indicator-exact}. 
In many applications, however, it is much easier to find an approximate solution of~\ref{eqn:lmo}. 
For instance, this is the case for the SDP problems in Section~\ref{sec:motiv}. 
To this end, we extend our results for inexact \ref{eqn:lmo} calls with additive and multiplicative error in the supplements. 

\textbf{Remark (Splitting).}
An important use-case of affine constraints in \eqref{eqn:main-template} is splitting (see Section~5.6 in \cite{yurtsever2018conditional}). Suppose that $\mathcal{X}$ can be written as the intersection of two (or more) simpler (in terms of computational cost of \ref{eqn:lmo} or projection) sets $\mathcal{A} \cap \mathcal{B}$. By using the standard product space technique, we can reformulate this problem in the extended space $(x,y) \in \mathcal{A} \times \mathcal{B}$ with the constraint $x=y$: 
\begin{equation}
\underset{(x,y) \in \mathcal{A} \times \mathcal{B}}{\text{minimize}} \quad \bbE_\Omega  f(x,\omega) \quad \text{subject to} \quad x = y.
\end{equation}
This allows us to decompose the difficult optimization domain $\mathcal{X}$ into simpler pieces. 
SHCGM requires \ref{eqn:lmo} of $\mathcal{A}$ and \ref{eqn:lmo} $\mathcal{B}$ separately. Alternatively, we can also use the projection onto one of the component sets (say $\mathcal{B}$) by reformulating the problem in domain $\mathcal{A}$ with an affine constraint $x \in \mathcal{B}$:
\begin{equation}
\underset{x \in \mathcal{A}}{\text{minimize}} \quad \bbE_\Omega  f(x,\omega) \quad \text{subject to} \quad x\in \mathcal{B}.
\end{equation}
An important example is the completely positive cone (intersection of the positive-semidefinite cone and the first orthant). Remark that the Clustering SDP example in Section~\ref{sec:motiv} is also defined on this cone. While the \ref{eqn:lmo} of this intersection can only be evaluated in $\mathcal{O}(n^3)$ computetion by using the Hungarian method, we can compute the \ref{eqn:lmo} for the semidefinite cone and the projection onto the first orthant much more efficiently. 

\section{Related Works}\label{sec:related-work}

CGM dates back to the 1956 paper of Frank and Wolfe \cite{FrankWolfe1956}. It did not acquire much interest in machine learning until the last decade because of its slower convergence rate in comparison with the (projected) accelerated gradient methods. However, there has been a resurgence of interest in CGM and its variants, following the seminal papers of Hazan \cite{Hazan2008} and Jaggi \cite{Jaggi2013}. They demonstrate that CGM might offer superior computational complexity than state-of-the-art methods in many large-scale optimization problems (that arise in machine learning) despite its slower convergence rate, thanks to its lower per-iteration cost.

The original method by Frank and Wolfe \cite{FrankWolfe1956} was proposed for smooth convex minimization on polytopes. 
The analysis is extended for smooth convex minimization on simplex by Clarkson \cite{Clarkson2010}, spactrahedron by Hazan \cite{Hazan2008}, and finally for arbitrary compact convex sets by Jaggi \cite{Jaggi2013}. All these methods are restricted for smooth problems. 

Lan \cite{Lan2014} proposed a variant for non-smooth minimization based on the Nesterov smoothing technique. 
Lan and Zhou \cite{Lan2016} also introduced the conditional gradient sliding method and extended it for the non-smooth minimization in a similar way. 
These methods, however, are not suitable for solving \eqref{eqn:main-template} because we let $g$ to be an indicator function which is not smoothing friendly. 

In a prior work \cite{yurtsever2018conditional}, we introduced homotopy CGM (HCGM) for composite problems (also with affine constraints). HCGM combines the Nesterov smoothing and quadratic penalty techniques under the CGM framework. It has $\mathcal{O}(1/\varepsilon^2)$ iteration complexity. In a follow-up work \cite{yurtsever2019cgal}, we extended this method from quadratic penalty to an augmented Lagrangian formulation for empirical benefits. Gidel et al., \cite{Gidel2018} also proposed an augmented Lagrangian CGM but the analysis and guarantees differ. We refer to the references in \cite{yurtsever2019cgal, yurtsever2018conditional} for other variants in this direction. 

So far, we have focused on deterministic variants of CGM. The literature on stochastic variants are much younger. We can trace it back to the Hazan and Kale's projection-free methods for online learning \cite{Hazan2012}. 
When $g$ is a non-smooth but Lipschitz continuous function, their method returns an $\varepsilon$-solution in $\mathcal{O}(1/\varepsilon^4)$ iterations. 

The standard extension of CGM to the stochastic setting gets $\mathcal{O}(1/\varepsilon)$ iteration complexity for smooth minimization, but with an increasing minibatch size. 
Overall, this method requires $\mathcal{O}(1/\varepsilon^3)$ sample complexity, see \cite{Hazan2016} for the details. More recently, Mokhtari et al., \cite{mokhtari2018stochastic} proposed a new variant with $\mathcal{O}(1/\varepsilon^3)$ convergence rate, but the proposed method can work with a single sample at each iteration. Hazan and Luo \cite{Hazan2016} and Yurtsever et al., \cite{yurtsever2019spiderfw} incorporated various variance for further improvements. Goldfarb et al., \cite{pmlr-v54-goldfarb17a} introduced two stochastic CGM variants, with away-steps and pairwise-steps. These methods enjoy linear convergence rate (however, the batchsize increases exponentially) but for strongly convex objectives and only in polytope domains. None of these stochastic CGM variants work for non-smooth (or composite) problems. 

Non-smooth conditional gradient sliding by Lan and Zhou \cite{Lan2016} also have extensions to the stochastic setting. 
There is also a lazy variant with further improvements by Lan et al., \cite{lan2017conditional}. 
Note however, similar to their deterministic variants, these methods are based on the Nesterov smoothing and are not suitable for problems with affine constraints.   

Garber and Kaplan \cite{garber2018fast} considers problem \eqref{eqn:main-template}. 
They also propose a variance reduced algorithm, but this method indeed solves the smooth relaxation of \eqref{eqn:main-template} (see Definition~1 Section~4.1). 
Contrary to SHCGM, this method might not asymptotically converge to a solution of the original problem. 

Lu and Freund \cite{lu2018generalized} also studied a similar problem template. 
However, their method incorporates the non-smooth term into the linear minimization oracle. 
This is restrictive in practice because the non-smooth term can increase the cost of linear minimization. 
In particular, this is the case when $g$ is an indicator function, such as in SDP problems. 
This is orthogonal to our scenario in which the affine constraints are processed by smoothing, not directly through \ref{eqn:lmo}.

In recent years, CGM has also been extended for non-convex problems. These extensions are beyond the scope of this paper. We refer to Yu et al., \cite{yu2014generalizedcgm} and Julien-Lacoste \cite{julienlacoste2016nonconvexfw} for the non-convex extensions in the deterministic setting, and to Reddi et al., \cite{reddi2016stochastic}, Yurtsever et al., \cite{yurtsever2019spiderfw}, and Shen et al. \cite{Shen2019} in the stochastic setting. 

To the best of our knowledge, SHCGM is the first CGM-type algorithm for solving \eqref{eqn:main-template} with cheap linear minimization oracles. 
Another popular approach for solving large-scale instances of \eqref{eqn:main-template} is the operator splitting. 
See \cite{Cevher2018sfdr} and the references therein for stochastic operator splitting methods. 
Unfortunately, these methods still require projection onto $\mathcal{X}$ at each iteration. 
This projection is arguably more expensive than the linear minimization. 
For instance, for solving \eqref{eqn:stochastic-sdp}, the projection has cubic cost (with respect to the problem dimension $n$) while the linear minimization can be efficiently solved using subspace iterations, as depicted in Table~\ref{table:complexity}. 

\begin{table}[ht!]
\begin{center}
\begin{tabular}{ccccc}
\toprule
Algorithm & Iteration complexity & Sample complexity  & Solves~\eqref{eqn:stochastic-sdp} & Per-iteration cost (for~\eqref{eqn:stochastic-sdp})\\
\midrule
\cite{yurtsever2018conditional} & $\mathcal{O}({1}/{\varepsilon^{2}})$ &  $N$ & Yes & $\Theta(N_{\nabla}/\delta)$ \\
\cite{garber2018fast} & $\mathcal{O}({1}/{\varepsilon^2})$ &  $\mathcal{O}({1}/{\varepsilon^4})$ & No & $\Theta(N_{\nabla}/\delta)$ \\
\cite{Hazan2016}  & $\mathcal{O}(1/\varepsilon)$ & $\mathcal{O}(1/\varepsilon^3)$ & No & $\Theta(N_{\nabla}/\delta)$ \\
\cite{lu2018generalized} & $\mathcal{O}(1/\varepsilon)$ & $\mathcal{O}(1/\varepsilon^2)$ & No & SDP \\
\cite{hazan2008sparse} &  $\mathcal{O}(1/\varepsilon)$ & $N$ & No & $\Theta(N_{\nabla}/\delta)$\\
\phantom{$^*$}\cite{Cevher2018sfdr}$^*$ & $-$ &  $-$ & Yes & $\Theta(n^3)$ \\
\textbf{SHCGM} & $\mathcal{O}({1}/{\varepsilon^{3}})$ &  $\mathcal{O}({1}/{\varepsilon^{3}})$ & Yes & $\Theta(N_{\nabla}/\delta)$ \\
\bottomrule\\
\end{tabular}
 \caption{Existing algorithms to tackle~\eqref{eqn:stochastic-sdp}. $N$ is the size of the dataset. $n$ is the dimension of each datapoint. $N_{\nabla}$ is the number of non-zeros of the gradient. $\delta$ is the accuracy of the approximate \ref{eqn:lmo}. The per-iteration cost of \cite{lu2018generalized} is the cost of solving a SDP in the canonical form. \\ 
{\footnotesize $^*$\cite{Cevher2018sfdr} has $\mathcal{O}({1}/{\varepsilon^2})$ iteration and sample complexity when the objective function is strongly convex. This is not the case in our model problem, and \cite{Cevher2018sfdr} only has an asymptotic convergence guarantee. }
}\label{table:complexity}
\end{center}
\end{table}
\section{Numerical Evidence}
\label{sec:experiments}
This section presents the empirical performance of the proposed method for the stochastic k-means clustering, covariance matrix estimation, and matrix completion problems. 
We performed the experiments in MATLAB R2018a using a computing system of 4$\times$ Intel Xeon CPU E5-2630 v3\textrm{@}2.40GHz and 16 GB RAM. 
We include the code to reproduce the results in the supplements\footnote{Code also available at: \url{https://github.com/alpyurtsever/SHCGM}}. 

\subsection{Stochastic k-means Clustering}
\label{sec:kmeans}
We consider the SDP formulation \eqref{eqn:clustering} of the k-means clustering problem. 
The same problem is used in numerical experiments by Mixon et al. \cite{Mixon2017}, and we design our experiment based on their problem setup\footnote{D.G. Mixon, S. Villar, R.Ward. | Available at \url{https://github.com/solevillar/kmeans_sdp}} with a sample of $1000$ datapoints from the MNIST data\footnote{Y. LeCun and C. Cortes. | Available at \url{http://yann.lecun.com/exdb/mnist/}}. 
See \cite{Mixon2017} for details on the preprocessing.

We solve this problem with SHCGM and compare it against HCGM \cite{yurtsever2018conditional} as the baseline. 
HCGM is a deterministic algorithm hence it uses the full gradient. 
For SHCGM, we compute a gradient estimator by randomly sampling $100$ datapoints at each iteration. 
Remark that this corresponds to observing approximately $1$ percent of the entries of $D$. 

We use $\beta_0 = 1$ for HCGM and $\beta_0 = 10$ for SHCGM. 
We set these values by tuning both methods by trying $\beta_0 = 0.01, 0.1, ..., 1000$. 
We display the results in Figure~\ref{fig:clustering} where we denote a full pass over the entries of $D$ as an epoch. 
Figure~\ref{fig:clustering} demonstrates that SHCGM performs similar to HCGM although it uses less data. 

\begin{figure}[th!]
\centering
\includegraphics[width=\linewidth]{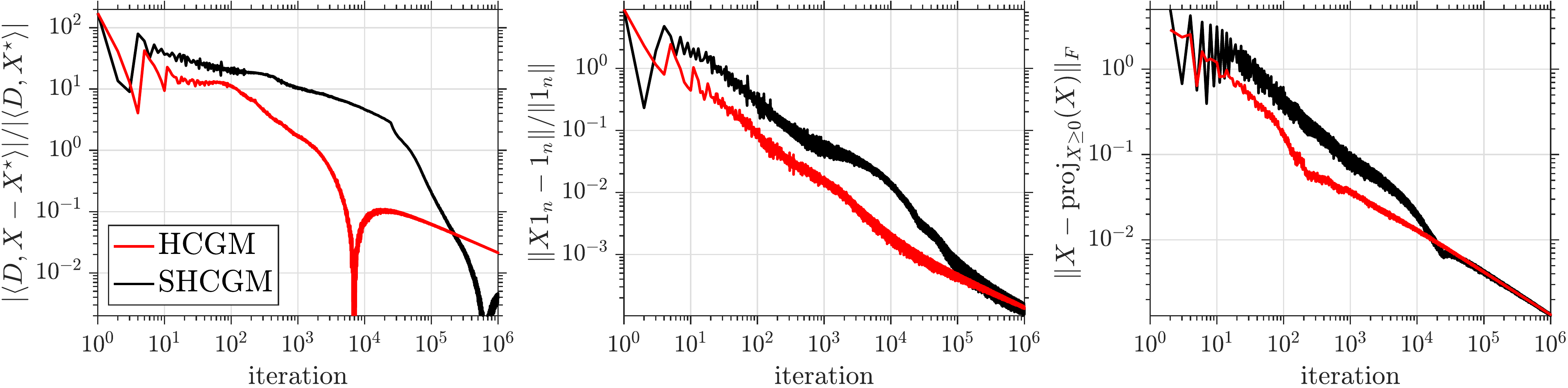}\\[0.5em]
\includegraphics[width=\linewidth]{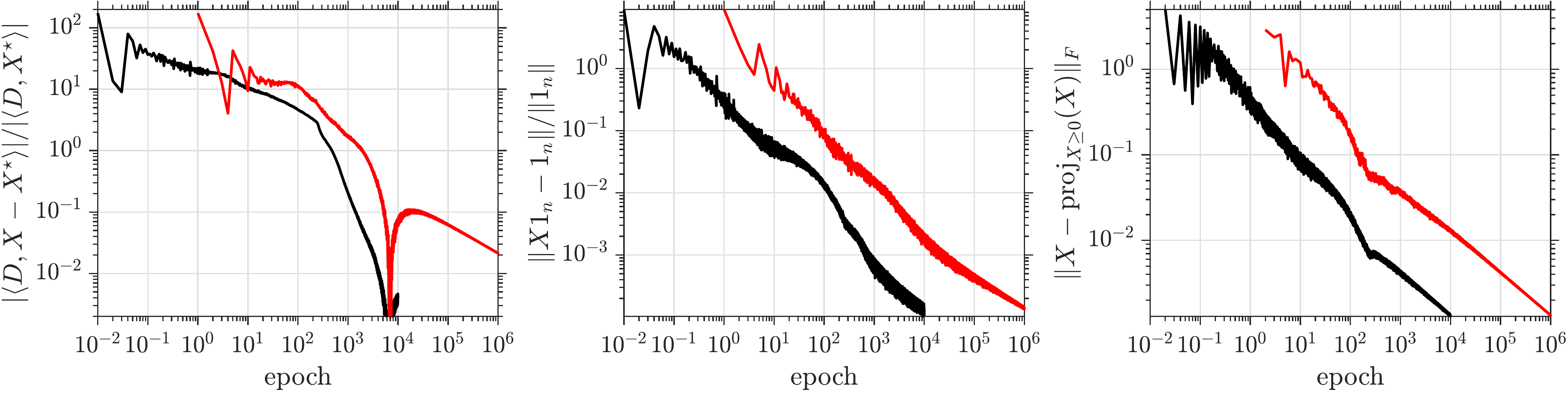}
        \caption{\small Comparison of SHCGM with HCGM for k-means clustering SDP in Section~\ref{sec:kmeans}.}
        \label{fig:clustering}
\end{figure}

\subsection{Online Covariance Matrix Estimation}
Covariance matrix estimation is an important problem in multivariate statistics with applications in many fields including gene microarrays, social network, finance, climate analysis \cite{richard2012estimation,schafer2005shrinkage,fan2016overview,fan2014challenges}, etc.
In the online setting, we suppose that the data is received as a stream of datapoints in time. 

The deterministic approach is to first collect some data, and then to train an empirical risk minimization model using the data collected. 
This has obvious limitations, since it may not be clear a priori how much data is enough to precisely estimate the covariance matrix. 
Furthermore, data can be too large to store or work with as a batch. 
To this end, we consider an online learning setting. 
In this case, we use each datapoint as it arrives and then discard it. 

Let us consider the following sparse covariance matrix estimation template (this template also covers other problems such as graph denoising and link prediction \cite{richard2012estimation}) :
\begin{equation}
\underset{{X \in \mathbb{S}_+^n, ~ \trace(X) \leq \beta_1}}{\text{minimize}} \quad \bbE_\Omega \norm{X - \omega\omega^\top}_F^2  \quad \text{subject to} \quad \norm{X}_1 \leq \beta_2.
\end{equation}
where $\norm{X}_1$ denotes the $\ell_1$ norm (sum of absolute values of the entries). 

\begin{figure}[t!]
\centering
        \includegraphics[width=1\linewidth]{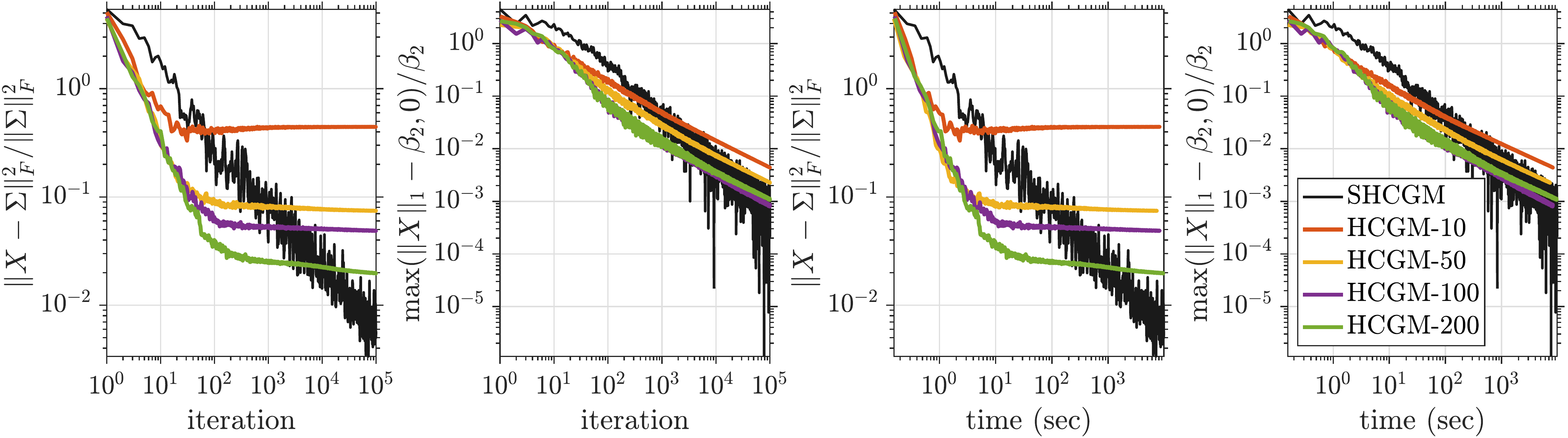}
        \caption{\small SHCGM and HCGM on Online covariance matrix estimation from streaming data.}
        \label{fig:covariance}
\end{figure}

Our test setup is as follows: We first create a block diagonal covariance matrix $\Sigma \in \R^{n\times n}$ using 10 blocks of the form $\phi \phi^\top$, where entries of $\phi$ are drawn uniformly random from $[-1,1]$. This gives us a sparse matrix $\Sigma$ of rank $10$. 
Then, as for datapoints, we stream observations of $\Sigma$ in the form $\omega_i \sim \mathcal{N}(0,\Sigma)$. 
We fix the problem dimension $n=1000$. 

We compare SHCGM with the deterministic method, HCGM. We use $\beta_0 = 1$ for both methods. 
Both methods require the \ref{eqn:lmo} for the positive-semidefinite cone with trace constraint, and the projection oracle for the $\ell_1$ norm constraint at each iteration. 

We study two different setups: 
In Figure~\ref{fig:covariance}, we use SHCGM in the online setting. We sample a new datapoint at each iteration. 
HCGM, on the other hand, does not work in the online setting. Hence, we use the same sample of datapoints for all iterations. 
We consider 4 different cases with different sample sizes for HCGM, with 10, 50, 100 and 200 datapoints. 
Although this approach converges fast up to some accuracy, the objective value gets saturated at some estimation accuracy. 
Naturally, HCGM can achieve higher accuracy as the sample size increases. 

We can also read the empirical convergence rates of SHCGM from Figure~\ref{fig:covariance} as approximately $\mathcal{O}(k^{-1/2})$ for the objective residual and $\mathcal{O}(k^{-1})$ for the feasibility gap, significantly better than the theoretical guarantees . 

\begin{wrapfigure}{r}{0.6\linewidth}
\vspace{-1em}
\centering\includegraphics[width=0.9\linewidth]{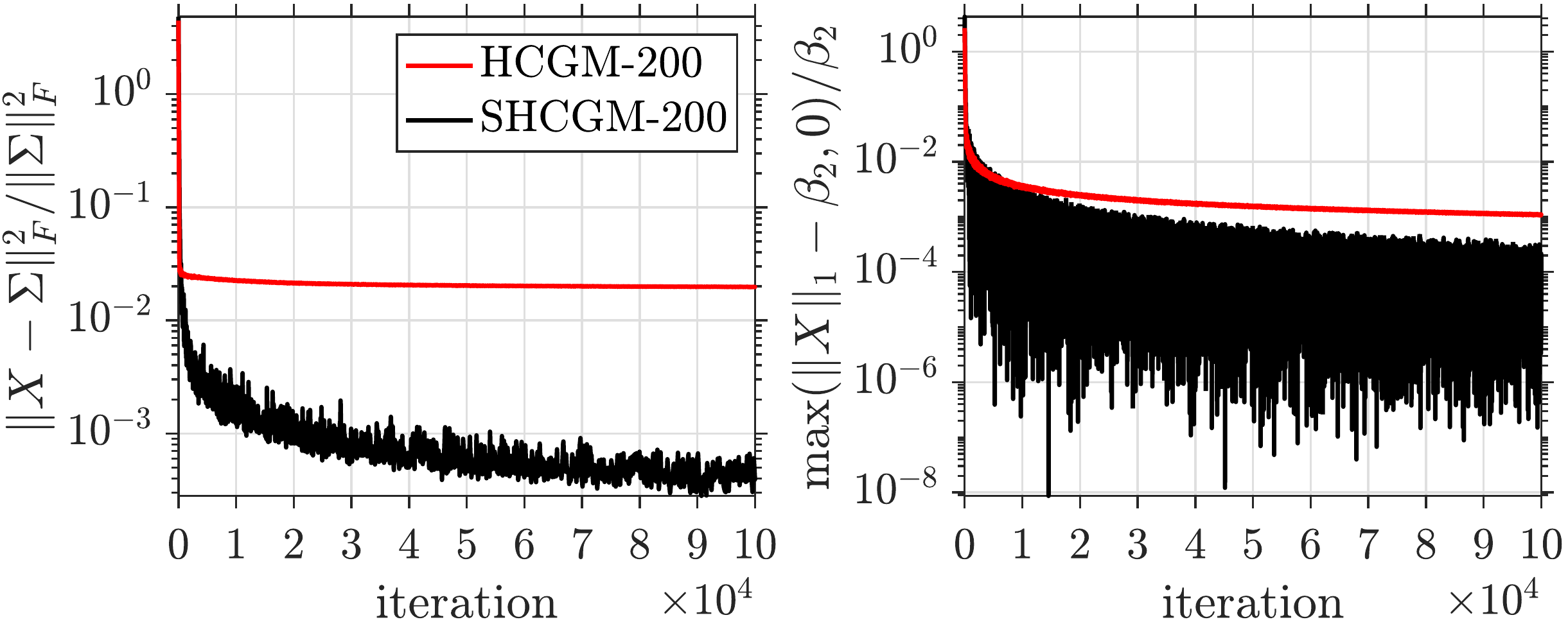}
        \caption{\small Comparison of SHCGM with HCGM batchsize 200 for online covariance matrix estimation.}
        \label{fig:covariance-minibatch}
\vspace{-2em}
\end{wrapfigure}
If we can store larger samples, we can also consider minibatches for the stochastic methods. 
Figure~\ref{fig:covariance-minibatch} compares the deterministic approach with $200$ datapoints with the stochastic approach with minibatch size of $200$. In other words, while the deterministic method uses the same $200$ datapoints for all iterations, we use a new draw of $200$ datapoints at each iteration with SHCGM. 

\subsection{Stochastic Matrix Completion}
We consider the problem of matrix completion with the following mathematical formulation:
\begin{equation}\label{eqn:exp_online_mat_comp}
\underset{\|X\|_* \leq \beta_1}{\text{minimize}} \quad  \sum_{(i,j) \in \Omega} (X_{i,j} - Y_{i,j})^2  \quad \text{subject to} \quad 1\leq X\leq 5,
\end{equation}
where, $\Omega$ is the set of observed ratings (samples of entries from the true matrix $Y$ that we try to recover), and $\|X\|_*$ denotes the nuclear-norm (sum of singular values). The affine constraint $1\leq X\leq 5$ imposes a hard threshold on the estimated ratings (in other words, the entries of $X$).

\begin{figure}[ht!]
\begin{minipage}{0.7\textwidth}
\includegraphics[width=0.95\linewidth]{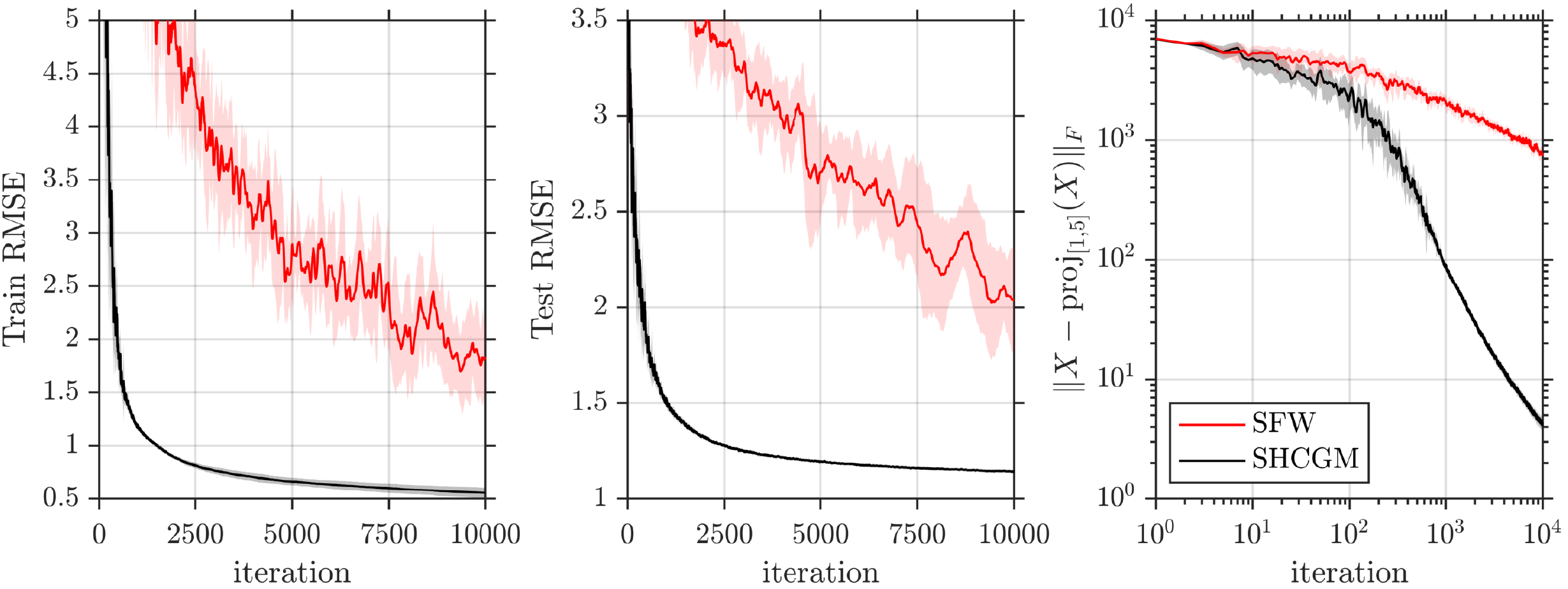}
\end{minipage}
\begin{minipage}{0.29\textwidth}
\center
\begin{tabular}{ l c }
\toprule
   & \small{train RMSE } \\
  \midrule
  \footnotesize
 {SHCGM} & {0.5574 \!$\pm$\! 0.0498} \\
 {SFW} & 1.8360 \!$\pm$\! 0.3266\\
  \bottomrule
\end{tabular}
\vspace{0.5em}
\begin{tabular}{ l c }
\toprule
   & \small{ test RMSE } \\
  \midrule
  \footnotesize
  {SHCGM} & {1.1446 \!$\pm$\! 0.0087} \\
  {SFW} & 2.0416  \!$\pm$\!  0.2739\\
  \bottomrule
\end{tabular}
\end{minipage}
        \caption{\small Training Error, Feasibility gap and Test Error for MovieLens~100k. Table shows the mean values and standard deviation of train and test RMSE over 5 different train/test splits at the end of $10^4$ iterations.}
        \label{fig:movielens100k}
\end{figure}

We first compare SHCGM with the Stochastic Frank-Wolfe (SFW) from \cite{mokhtari2018stochastic}. 
We consider a test setup with the MovieLens100k dataset\footnote{\label{ftnote:foot3}F.M. Harper, J.A. Konstan. | Available at \url{https://grouplens.org/datasets/movielens/}} \cite{harper2016movielens}. 
This dataset contains $\sim$100'000 integer valued ratings between $1$ and $5$, assigned by $1682$ users to $943$ movies. 
The aim of this experiment is to emphasize the flexibility of SHCGM: 
Recall that SFW does not directly apply to \eqref{eqn:exp_online_mat_comp} as it cannot handle the affine constraint $1\leq X\leq 5$. 
Therefore, we apply SFW to a relaxation of  \eqref{eqn:exp_online_mat_comp} that omits this constraint. 
Then, we solve \eqref{eqn:exp_online_mat_comp} with SHCGM and compare the results. 

We use the default \texttt{ub.train} and \texttt{ub.test} partitions provided with the original data. 
We set the model parameter for the nuclear norm constraint $\beta_1 = 7000$, and the initial smoothing parameter $\beta_0 = 10$. 
At each iteration, we compute a gradient estimator from $1000$ \textit{iid} samples. 
We perform the same test independently for $10$ times to compute the average performance and confidence intervals. 
In Figure~\ref{fig:movielens100k}, we report the training and test errors (root mean squared error) as well as the feasibility gap. 
The solid lines display the average performance, and the shaded areas show $\pm$ one standard deviation. 
Note that SHCGM performs uniformly better than SFW, both in terms of the training and test errors. 
The Table shows the values achieved at the end of $10'000$ iterations. 

Finally, we compare SHCGM with the stochastic three-composite convex minimization method (S3CCM) from \cite{Yurtsever2016s3cm}. 
S3CCM is a projection-based method that applies to \eqref{eqn:exp_online_mat_comp}. 
In this experiment, we aim to demonstrate the advantages of the projection-free methods for problems in large-scale. 

We consider a test setup with the MovieLens1m dataset$^{\ref{ftnote:foot3}}$ with $\sim$1 million ratings from $\sim$$6000$ users on $\sim$$4000$ movies. 
We partition the data into training and test samples with a $80/20$ train/test split. 
We use  $10'000$ \textit{iid} samples at each iteration to compute a gradient estimator. 
We set the model parameter $\beta_1 = 20'000$. 
We use $\beta_0 = 10$ for SHCGM, and we set the step-size parameter $\gamma = 1$ for S3CCM. 
We implement the \ref{eqn:lmo} efficiently using the power method. 
We refer to the code in the supplements for details on the implementation. 

\begin{wrapfigure}{r}{0.53\linewidth}
\centering
\vspace{-1.25em}
\includegraphics[width=\linewidth]{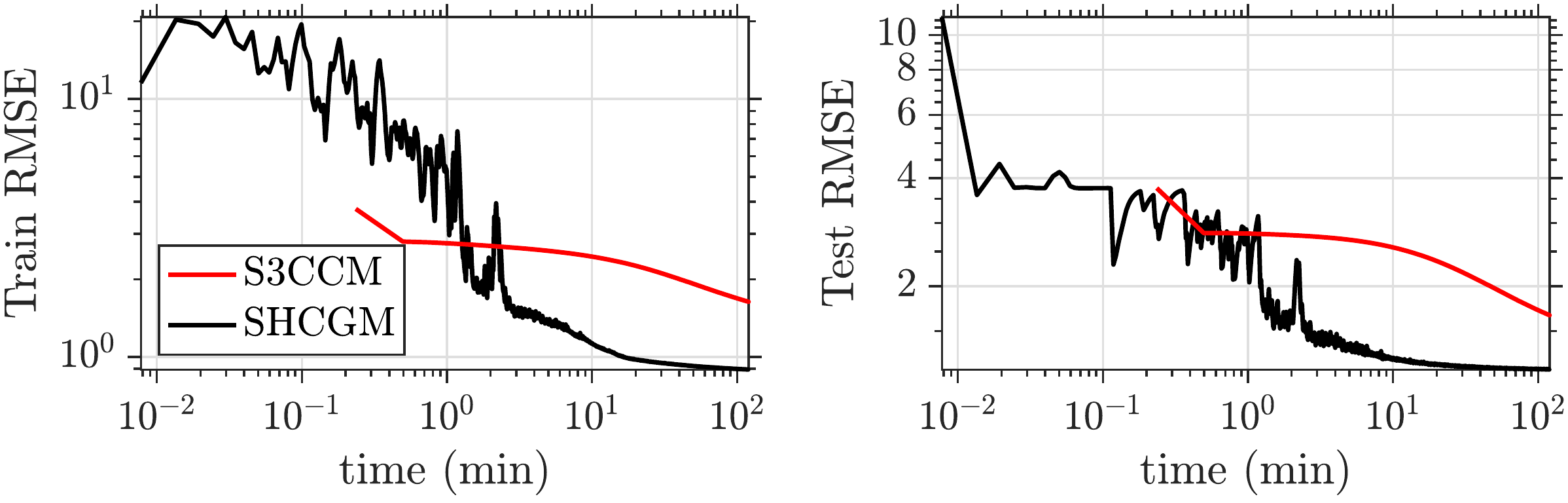}
        \caption{\small SHCGM vs S3CCM with MovieLens-1M.}
        \label{fig:movielens1m}
        \vspace{-2em}
\end{wrapfigure}
Figure~\ref{fig:movielens1m} reports the outcomes of this experiment. 
SHCGM clearly outperforms S3CCM in this test. 
We run both methods for $2$ hours. 
Within this time limit, SHCGM can perform $27'860$ iterations while S3CCM can gets only up to $435$ because of the high computational cost of the projection.

\section{Conclusions}
\label{sec:conclusion}

We introduced a scalable stochastic CGM-type method for solving convex optimization problems with affine constraints and demonstrated empirical superiority of our approach in various numerical experiments. In particular, we consider the case of stochastic optimization of SDPs for which we give the first projection-free algorithm. 
In general, we showed that our algorithm provably converges to an optimal solution of~\eqref{eqn:main-template} with $\mathcal{O}(k^{-1/3})$ and $\mathcal{O}(k^{-5/12})$ rates in the objective residual and feasibility gap respectively, with a sample complexity in the statistical setting of $\mathcal{O}(k^{-1/3})$. 
The possibility of a faster rate with the same (or even better) sample complexity remains an open question as well as an adaptive approach with $\mathcal{O}(k^{-1/2})$ rate when fed with exact gradients. 

\section*{Acknowledgements}
Francesco Locatello has received funding from the Max Planck ETH Center for Learning Systems, by an ETH Core Grant (to Gunnar R\"atsch) and by a Google Ph.D. Fellowship.
Volkan Cevher and Alp Yurtsever have received funding from the Swiss National Science Foundation (SNSF) under grant number $200021\_178865 / 1$, and the European Research Council (ERC) under the European Union's Horizon $2020$ research and innovation program (grant agreement no $725594$ - time-data). 

{
\setlength{\bibsep}{0.25em}
\bibliographystyle{plain}
\bibliography{bibliography.bib}
}
\clearpage
\newpage
\appendix
\section{A Review of Smoothing}
The technique described in \cite{Nesterov2005} consists in a the following smooth approximation of a Lipschitz continuous function $g$ as:
\begin{equation*}
g_\beta (z) = \max_{y \in \R^d} \ip{z}{y} - g^*(y) - \frac{\beta}{2} \norm{y}^2,
\end{equation*}
where $\beta>0$ controls the tightness of smoothing and $g^*$ denotes the Fenchel conjugate of $g$
\begin{equation*}
g^*(x) = \sup_{v \in \text{dom} \ g} \ip{x}{v} - g(v).
\end{equation*}
It is easy to see that $g_\beta$ is convex and $\frac{1}{\beta}$ smooth.
Optimizing $g_\beta (z)$ guarantees progress on $g(z)$ when $g(z)$ is $L_g$-Lipschitz continuous as:
\begin{align*}
g_\beta(z)\leq g(z)\leq g_\beta(z) + \frac{\beta}{2} L_g
\end{align*}
The challenge of smoothing an affine constraint consists in the fact that the indicator function is not Lipschitz. Therefore, $g^*$ does not have bounded support so adding a strongly convex term to it does not guarantee that $g$ and its smoothed version are uniformly close. 

In order to smooth constraints which are not Nesterov smoothable, \cite{yurtsever2018conditional} consider an Homotopy transformation on $\beta$ which can be intuitively understood as follows. If $\beta$ decreases during the optimization, optimizing $g_\beta(z)$ will progressively become similar to optimizing $g(z)$. Therefore, the iterate will converge to the feasibility set.

Using the Homotopy smoothing, the objective of Equation~\eqref{eqn:main-template} is replaced by the following approximation:
\begin{equation}
\label{appeqn:smoothed-template}
\min_{x \in \mathcal{X}} F_\beta(x)  := \bbE_\Omega f(x, \omega) + g_\beta(Ax).
\end{equation}
Let  $y^\ast_{\beta_k}$ be:
\begin{align*}
y^\ast_{\beta_k} (Ax)
& = 
\arg\max_{y \in \R^d} \ip{Ax}{y} - g^*(y) - \frac{\beta_k}{2} \norm{y}^2  = \prox{\beta_k^{-1} g^\ast} (\beta_k^{-1} Ax) = \frac{1}{\beta_k} \big( Ax - \prox{\beta_k g} (Ax) \big), 
\end{align*}
with the last equality  due to the Moreau decomposition.

Note that often $\prox{\beta_k g} (Ax)$ is easy to compute (for example when $g(z)$ is an affine constraint) but the projection on $\mathcal{X}$ is not.
For example, for the $AX = b$ constraint of~\eqref{eqn:stochastic-sdp}, $y^\ast_{\beta_k} (AX) = AX - b$.
Therefore, \cite{yurtsever2018conditional} suggests to follow the same iterative procedure of the CGM which queries a Linear Minimization Oracle $(\mathrm{lmo})$ at each iteration:
\begin{equation}
\label{appeqn:lmo}
\mathrm{lmo}(\nabla F_{\beta_k}) := \arg\min_{x\in\mathcal{X}} \ip{x}{\nabla F_{\beta_k}(x_k) }
\end{equation}

Moreover, we can compute the gradient of $F_{\beta_k}$ as long as $\prox{\beta_k g} (Ax)$ is easy to compute. Indeed:
\begin{align}\label{appeq:full_h_grad}
\nabla F_{\beta_k}(x) 
& = 
\nabla_x \bbE_\Omega f(x, \omega) + A^\top y^\ast_{\beta_k}(Ax).
\end{align}
The solution of the \ref{eqn:lmo} is then combined to the current iterate with a convex combination, so that the next iterate is guaranteed to be a member of $\mathcal{X}$. In the deterministic setting this technique comes with a reduction in the rate from $\mathcal{O}(1/k)$ to $\mathcal{O}(1/\sqrt{k})$.

\section{Inexact Oracles}
\label{sec:inexact}
In practice, finding an exact solution can be expensive, especially when it involves a matrix factorization. Therefore, algorithms which are robust against inexact oracles are crucial in practice. 

Even when the penalty is not present, we are not aware of approximate oracle rates in the framework of~\cite{mokhtari2018stochastic}. Due to the accumulation of the stochastic gradient, the deterministic definitions of inexact oracle are applicable to the stochastic case with the non-smooth penalty~\cite{lacoste2012block,locatello2017unified,locatello2017greedy}. 

\subsection{Additive Error}
At iteration $k$, for the given $v_k$, we assume that the approximate \ref{eqn:lmo} returns an element $\tilde{s}_k\in\mathcal{X}$ such that:
\begin{equation}\label{eqn:inexact_LMO}
\ip{v_k}{\tilde{s}_k} \leq \ip{v_k}{s_k} + \delta \frac{\eta_k}{2}D_{\mathcal{X}}^2\left(L_f + \frac{\|A\|^2}{\beta_k}\right)
\end{equation}
for some $\delta > 0$, where $s_k$ is the exact \ref{eqn:lmo} solution. 

We now present the convergence guarantees of Algorithm~\ref{alg:stochastic_HFW} when the exact \ref{eqn:lmo} is replaced with the approximate oracle with additive error.

\begin{corollary}\label{cor:lipschitz-inexact}
Assume that $g$ is $L_g$-Lipschitz continuous. 
Then, the sequence $x_k$ generated by Algorithm~\ref{alg:stochastic_HFW} for $k\geq 1$ with approximate \ref{eqn:lmo} \eqref{eqn:inexact_LMO} satisfies:
\begin{align*}
\bbE F(x_{k+1}) -  F^\star 
\leq 9^\frac{1}{3} \frac{C_\delta}{(k+8)^\frac{1}{3}} + \frac{\beta_0 L_g^2}{2\sqrt{k+8}},
\end{align*}
where $C_\delta := \frac{81}{2}D_{\mathcal{X}}^2(L_f + \beta_0\norm{A}^2)(1+\delta) + 9D_\mathcal{X}\sqrt{Q}$.
We can optimize $\beta_0$ from this bound if $\delta$ is known. 
\end{corollary}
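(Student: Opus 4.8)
The plan is to re-run the four-step argument behind Theorem~\ref{cor:g_lip} and verify that the inexact \ref{eqn:lmo} contributes only a harmless $(1+\delta)$ rescaling of the curvature constant. Steps (i) and (ii) --- relating $d_k$ to $\nabla_x \bbE_\Omega f(x_k,\omega)$ (Lemma~\ref{lemma:linear_exact_additive}) and bounding $\bbE\norm{d_k - \nabla_x\bbE_\Omega f(x_k,\omega)}^2$ (Lemma~\ref{lemma:stoch_gradient_convergence}) --- concern only the averaging recursion \eqref{eqn:grad-est} and are untouched by the oracle, so I would import them verbatim.

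The work is in step (iii), the recursion for the smoothed gap $\bbE F_{\beta_k}(x_{k+1}) - F^\star$. Because $F_{\beta_k}$ is $(L_f + \norm{A}^2/\beta_k)$-smooth, the CGM update gives the usual descent inequality, now with $\tilde s_k$ in place of $s_k$:
\[
F_{\beta_k}(x_{k+1}) \le F_{\beta_k}(x_k) + \eta_k \ip{\nabla F_{\beta_k}(x_k)}{\tilde s_k - x_k} + \frac{\eta_k^2}{2}\bigl(L_f + \norm{A}^2/\beta_k\bigr) D_{\mathcal{X}}^2 .
\]
I would then feed in \eqref{eqn:inexact_LMO}, i.e. $\ip{v_k}{\tilde s_k} \le \ip{v_k}{s_k} + \delta \frac{\eta_k}{2} D_{\mathcal{X}}^2(L_f + \norm{A}^2/\beta_k) \le \ip{v_k}{x^\star} + \delta \frac{\eta_k}{2} D_{\mathcal{X}}^2(L_f + \norm{A}^2/\beta_k)$, and split $\ip{v_k}{\,\cdot\,}$ into $\ip{\nabla F_{\beta_k}(x_k)}{\,\cdot\,}$, bounded above via convexity by $F_{\beta_k}(x^\star) - F_{\beta_k}(x_k) \le F^\star - F_{\beta_k}(x_k)$ (using $g_{\beta_k} \le g$), plus the gradient-error term $\ip{v_k - \nabla F_{\beta_k}(x_k)}{x^\star - \tilde s_k} = \ip{d_k - \nabla_x\bbE_\Omega f(x_k,\omega)}{x^\star - \tilde s_k}$, which is handled by Cauchy--Schwarz ($\norm{x^\star - \tilde s_k} \le D_{\mathcal{X}}$) together with steps (i)--(ii), exactly as in the exact case. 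Combined with the homotopy estimate $F_{\beta_{k+1}}(x_{k+1}) - F_{\beta_k}(x_{k+1}) \le \frac{\beta_k - \beta_{k+1}}{2} L_g^2$ (valid because $\norm{y^\ast_{\beta_k}} \le L_g$ for Lipschitz $g$), this produces the same recursion as in Theorem~\ref{thm:composite} except that the curvature term $\frac{\eta_k^2}{2}(L_f + \norm{A}^2/\beta_k) D_{\mathcal{X}}^2$ is replaced by $(1+\delta)$ times itself.

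Unrolling the recursion with the schedules $\eta_k = 9/(k+8)$, $\beta_k = \beta_0/(k+8)^{1/2}$, $\rho_k = 4/(k+7)^{2/3}$ is then identical to the exact-oracle proof; the factor $(1+\delta)$ rides along on the $\frac{81}{2}D_{\mathcal{X}}^2(L_f + \beta_0\norm{A}^2)$ part of the constant, while the variance/smoothness contribution $9 D_{\mathcal{X}}\sqrt{Q}$ ($Q$ denoting the quantity collecting the variance and $L_f$ terms from the exact-oracle analysis) stays $\delta$-free, yielding $C_\delta = \frac{81}{2}D_{\mathcal{X}}^2(L_f + \beta_0\norm{A}^2)(1+\delta) + 9 D_{\mathcal{X}}\sqrt{Q}$. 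Step (iv) then converts the smoothed-gap bound to $\bbE F(x_{k+1}) - F^\star$ through the envelope inequality $g(z) \le g_{\beta_k}(z) + \frac{\beta_k}{2}L_g^2$ with $\beta_k = \beta_0/(k+8)^{1/2}$, adding the $\frac{\beta_0 L_g^2}{2\sqrt{k+8}}$ term. Since $\delta$ enters $C_\delta$ linearly and only through the $\beta_0\norm{A}^2$ coefficient, the closed-form minimization over $\beta_0$ from the exact case carries over whenever $\delta$ is known.

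The main obstacle is purely bookkeeping: checking that, after multiplying the descent inequality by the weights used to telescope the recursion, the extra term $\delta\frac{\eta_k^2}{2}D_{\mathcal{X}}^2(L_f + \norm{A}^2/\beta_k)$ aggregates to exactly a $\delta$-multiple of the curvature contribution already present --- so that it does not entangle with the $\beta_k$-decrease (homotopy) term or with the stochastic-gradient error term --- and that the scaling chosen in \eqref{eqn:inexact_LMO} is precisely what makes this alignment clean. Everything else is a line-by-line repetition of the proof of Theorem~\ref{cor:g_lip}.
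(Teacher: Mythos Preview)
Your overall plan is sound and matches the paper on the essentials: the additive-error oracle enters precisely as a $(1+\delta)$ scaling of the curvature term, and steps (i), (ii), (iv) are identical to the paper's. The one place you diverge is the homotopy bookkeeping inside (iii). You control the change of smoothing parameter via $\|y^\ast_{\beta}\| \le L_g$, which injects an extra $\tfrac{\beta_k-\beta_{k+1}}{2}L_g^2$ term into the recursion. The paper instead keeps step~(iii) completely $L_g$-free: in place of plain convexity of $F_{\beta_k}$ it uses the sharper linearization (Lemma~10 of \cite{TranDinh2017})
\[
\ip{\nabla g_{\beta_k}(Ax_k)}{Ax^\star - Ax_k} \le g(Ax^\star) - g_{\beta_k}(Ax_k) - \tfrac{\beta_k}{2}\norm{y^\ast_{\beta_k}(Ax_k)}^2,
\]
and then shifts $F_{\beta_k}(x_k)$ to $F_{\beta_{k-1}}(x_k)$ via $g_{\beta_k}(z)\le g_{\beta_{k-1}}(z)+\tfrac{\beta_{k-1}-\beta_k}{2}\norm{y^\ast_{\beta_k}(z)}^2$. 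With the stated schedules one checks $(1-\eta_k)(\beta_{k-1}-\beta_k)-\eta_k\beta_k<0$, so the two $\norm{y^\ast_{\beta_k}}^2$ contributions cancel \emph{exactly}, with no appeal to Lipschitzness of $g$. This is why Theorem~\ref{thm:composite} holds for arbitrary $g$ (indicator functions included) and why $C_\delta$ contains no $L_g$; the Lipschitz hypothesis enters only at step~(iv) through the envelope inequality.

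Your route still works for Lipschitz $g$ and gives the same $\mathcal{O}(k^{-1/3})$ rate, but the claim that it ``produces the same recursion as in Theorem~\ref{thm:composite}'' is not quite accurate: you carry an additional $O\!\bigl(L_g^2(k+8)^{-3/2}\bigr)$ term through the induction, which would perturb the stated constant (and the coefficient of the $L_g^2/\sqrt{k+8}$ term) unless you swap your plain-convexity bound for the refined inequality above. Also, a minor misattribution: Lemma~\ref{lemma:linear_exact_additive} in the paper is not the step relating $d_k$ to the full gradient---it is already the full bound on $\ip{\nabla F_{\beta_k}(x_k)}{\tilde s_k - x_k}$, incorporating both the inexact oracle and the crucial $-\tfrac{\beta_k}{2}\norm{y^\ast_{\beta_k}}^2$ term you are missing.
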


\begin{corollary}\label{lemma:indicator-inexact}
Assume that $g$ is the indicator function of a simple convex set $\mathcal{K}$. 
Then, the sequence $x_k$ generated by Algorithm~\ref{alg:stochastic_HFW} with the \ref{eqn:lmo} \eqref{eqn:inexact_LMO} satisfies:
\begin{align*}
&\bbE \bbE_\Omega f(x_{k+1}, \omega) -  f^\star   \geq -\norm{y^\star} ~\bbE  \mathrm{dist}(Ax_{k+1},\mathcal{K}) \\
&\bbE \bbE_\Omega f(x_{k+1}, \omega) -  f^\star   \leq  9^\frac{1}{3} \frac{C_\delta}{(k+8)^\frac{1}{3}} \\
&\bbE \mathrm{dist}(Ax_{k+1}, \mathcal{K})  \leq \frac{2 \beta_0 \norm{y^\star} }{\sqrt{k+8}} +\frac{2\sqrt{2\cdot9^\frac13 C_\delta\beta_0}}{(k+8)^{\frac{5}{12}}}
\end{align*}
\end{corollary}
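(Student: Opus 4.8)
\textit{Proof proposal.} The plan is to follow the proof of Theorem~\ref{cor:indicator-exact} essentially line by line, changing only the treatment of the linear minimization step; indeed the same modification that turns Theorem~\ref{cor:g_lip} into Corollary~\ref{cor:lipschitz-inexact} turns Theorem~\ref{cor:indicator-exact} into the present statement. The first task is therefore to re-derive the smooth-gap bound of Theorem~\ref{thm:composite} with the approximate oracle~\eqref{eqn:inexact_LMO}. In the Frank--Wolfe descent inequality for $F_{\beta_k}$ the linear term $\ip{v_k}{s_k - x_k}$ enters with weight $\eta_k$, so replacing $s_k$ by $\tilde s_k$ contributes at most $\eta_k \cdot \delta \tfrac{\eta_k}{2} D_{\mathcal{X}}^2 (L_f + \norm{A}^2/\beta_k) = \delta \cdot \tfrac{\eta_k^2}{2} D_{\mathcal{X}}^2 (L_f + \norm{A}^2/\beta_k)$, which is exactly $\delta$ times the curvature term of $F_{\beta_k}$ already present in the descent lemma (recall $F_{\beta_k}$ is $(L_f + \norm{A}^2/\beta_k)$-smooth). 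The two merge into a $(1+\delta)$ multiple of the curvature term, while everything else is untouched: the Lyapunov potential, the control of the gradient-estimator error through Lemmas~\ref{lemma:linear_exact_additive} and~\ref{lemma:stoch_gradient_convergence} (which only uses $\norm{x_{k+1}-x_k}\le\eta_k D_{\mathcal{X}}$ and is thus insensitive to the inexactness), and the telescoping with the schedules $\eta_k,\beta_k,\rho_k$. This yields $\bbE F_{\beta_k}(x_{k+1}) - F^\star \le 9^{1/3} C_\delta/(k+8)^{1/3}$, with $C_\delta$ carrying the $(1+\delta)$ factor only on the $D_{\mathcal{X}}^2(L_f+\beta_0\norm{A}^2)$ term, as stated.

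Next I would specialize to the case where $g$ is the indicator of $\mathcal{K}$. Then $g^\ast$ is the support function of $\mathcal{K}$, the smooth approximation is the squared-distance Moreau envelope $g_{\beta_k}(Ax) = \tfrac{1}{2\beta_k}\dist(Ax,\mathcal{K})^2$, and $F^\star = f^\star$ since $x^\star$ is feasible. Because $\dist(Ax_{k+1},\mathcal{K})^2 \ge 0$, dropping that term from the smooth-gap bound immediately gives the claimed upper estimate $\bbE \bbE_\Omega f(x_{k+1},\omega) - f^\star \le 9^{1/3} C_\delta/(k+8)^{1/3}$.

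For the lower estimate I would use strong duality: there exists a finite $y^\star$ with $f^\star = \inf_{x\in\mathcal{X}}\big(\bbE_\Omega f(x,\omega) + \ip{y^\star}{Ax}\big) - g^\ast(y^\star)$. Evaluating the infimand at $x = x_{k+1}$ and bounding $g^\ast(y^\star) = \max_{u\in\mathcal{K}}\ip{y^\star}{u} \ge \ip{y^\star}{\proj{\mathcal{K}}(Ax_{k+1})}$, then applying Cauchy--Schwarz, gives $\bbE_\Omega f(x_{k+1},\omega) - f^\star \ge -\ip{y^\star}{Ax_{k+1}-\proj{\mathcal{K}}(Ax_{k+1})} \ge -\norm{y^\star}\,\dist(Ax_{k+1},\mathcal{K})$; taking expectations is the first inequality of the corollary.

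Finally I would combine the two. Writing $t_k := \bbE\,\dist(Ax_{k+1},\mathcal{K})$ and using Jensen's inequality $\bbE\big[\dist(Ax_{k+1},\mathcal{K})^2\big]\ge t_k^2$, the smooth-gap bound reduces to the scalar quadratic inequality $\tfrac{1}{2\beta_k} t_k^2 - \norm{y^\star} t_k \le 9^{1/3}C_\delta/(k+8)^{1/3}$; solving this for $t_k$ (a quadratic-formula / AM--GM step) and substituting $\beta_k = \beta_0/(k+8)^{1/2}$ produces the two terms $\tfrac{2\beta_0\norm{y^\star}}{\sqrt{k+8}}$ and $\tfrac{2\sqrt{2\cdot 9^{1/3}C_\delta\beta_0}}{(k+8)^{5/12}}$ of the stated feasibility bound (the exponent $\tfrac{5}{12} = \tfrac12(\tfrac12+\tfrac13)$ coming from $\beta_k \cdot (k+8)^{-1/3}$). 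The main obstacle is the first paragraph: one must verify that the potential-function argument behind Theorem~\ref{thm:composite}, which delicately couples the decreasing smoothing $\beta_k$ with the gradient-averaging parameter $\rho_k$, still goes through once the inexactness term has been absorbed into the curvature, and that no term other than the curvature acquires a dependence on $\delta$. The remaining steps are the same duality and elementary-inequality manipulations already used for Theorem~\ref{cor:indicator-exact}.
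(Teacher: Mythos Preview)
Your proposal is correct and follows essentially the same route as the paper: the paper proves Theorem~\ref{thm:composite} already in the additive-inexact form (so the $(1+\delta)$ factor on the curvature term is exactly how $C_\delta$ arises there), and then the proof of Corollary~\ref{cor:indicator-exact} in the appendix is written directly with $C_\delta$, so it \emph{is} the proof of Corollary~\ref{lemma:indicator-inexact}. Your treatment of the lower bound via the Fenchel form $f^\star = \inf_{x\in\mathcal{X}}\big(\bbE_\Omega f(x,\omega)+\ip{y^\star}{Ax}\big)-g^\ast(y^\star)$ is equivalent to the paper's Lagrange saddle-point inequality, and you are in fact slightly more careful than the paper in making the Jensen step $\bbE[\dist^2]\ge(\bbE\,\dist)^2$ explicit before solving the quadratic.
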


\subsection{Multiplicative Error}
The additive error requires the accuracy of \ref{eqn:lmo} to increase as the algorithm progresses \cite{Jaggi2013}. This is restrictive in practice as it forces to invest more and more effort in the solution of the \ref{eqn:lmo} problem. 

For this reason, multiplicative error is often preferred, even though it adds the quality of the \ref{eqn:lmo} as a hyperparmeter \cite{locatello2017unified,locatello2018matching}, which we consider: 
\begin{align}\label{eqn:inexact_LMO_mult}
\ip{v_k}{ \tilde{s}_k - x_k} \leq \delta\ip{v_k}{s_k-x_k}
\end{align}
where $\delta\in(0,1]$ and $s_k$ is the exact \ref{eqn:lmo} solution.

We now present the convergence guarantees of Algorithm~\ref{alg:stochastic_HFW} when the exact \ref{eqn:lmo} is replaced with the approximate oracle with multiplicative error~\eqref{eqn:inexact_LMO_mult}

\begin{corollary}\label{cor:lipschitz-inexact-mult}
Assume that $g$ is $L_g$-Lipschitz continuous. 
Then, the sequence $x_k$ generated by Algorithm~\ref{alg:stochastic_HFW} with the \ref{eqn:lmo} \eqref{eqn:inexact_LMO_mult}, and modifying $\eta_k = \frac{9}{\delta (k-1)+9}$, $\beta_k = \frac{\beta_0}{\sqrt{\delta (k-1)+9}}$ and $\rho_k = \frac{4}{(\delta (k-2)+9)^\frac{2}{3}}$ satisfies:\\
\begin{align*}
\bbE F(x_{k+1}) &-  F^\star \leq 9^\frac{1}{3} \frac{\frac{C}{\delta} + \mathcal{E}_{1}}{(\delta (k-1)+9)^\frac{1}{3}}\!+\! \frac{\beta_0 L_g^2}{2 \sqrt{\delta (k-1) \!+\! 9}},
\end{align*}
We can optimize $\beta_0$ from this bound if $\delta$ is known.
\end{corollary}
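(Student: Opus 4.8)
\textit{Proof sketch.}
The plan is to mirror the four-step argument behind Theorem~\ref{cor:g_lip}: reuse the gradient-tracking Lemmas~\ref{lemma:linear_exact_additive} and~\ref{lemma:stoch_gradient_convergence} (their proofs only use $\tilde{s}_k\in\mathcal{X}$, so they survive the inexact oracle once the step-size/averaging schedule is re-indexed), replace the smooth-gap bound (the analogue of Theorem~\ref{thm:composite}) by a version that tolerates the multiplicative error~\eqref{eqn:inexact_LMO_mult}, and finish with the Nesterov envelope inequality. Throughout, write $m_k := \delta(k-1)+9$, so that $\eta_k = 9/m_k$, $\beta_k = \beta_0/\sqrt{m_k}$, $\rho_k = 4/(m_k-\delta)^{2/3}$ and $m_k - m_{k-1}=\delta$; the reparametrization is chosen so that $m_k$ plays exactly the role that $k+8$ plays in the exact-oracle analysis, except that ``time'' now advances by $\delta$ per iteration.

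First I would write the one-step descent inequality. Since $F_{\beta_k}$ is $\big(L_f + \norm{A}^2/\beta_k\big)$-smooth and $x_{k+1}=x_k+\eta_k(\tilde{s}_k-x_k)$ with $\tilde{s}_k,x_k\in\mathcal{X}$,
\begin{equation*}
F_{\beta_k}(x_{k+1}) \leq F_{\beta_k}(x_k) + \eta_k \ip{\nabla F_{\beta_k}(x_k)}{\tilde{s}_k-x_k} + \frac{\eta_k^2}{2}\Big(L_f+\frac{\norm{A}^2}{\beta_k}\Big)D_\mathcal{X}^2 .
\end{equation*}
To bound the linear term I would split $\nabla F_{\beta_k}(x_k)=v_k+\big(\nabla_x\bbE_\Omega f(x_k,\omega)-d_k\big)$ and chain $\ip{v_k}{\tilde{s}_k-x_k}\leq\delta\ip{v_k}{s_k-x_k}\leq\delta\ip{v_k}{x^\star-x_k}$ (the first inequality is~\eqref{eqn:inexact_LMO_mult}, the second holds because $s_k$ minimizes $\ip{v_k}{\cdot}$ over $\mathcal{X}\ni x^\star$ and $\delta>0$). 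Convexity of $F_{\beta_k}$, Cauchy--Schwarz, and $\norm{v_k-\nabla F_{\beta_k}(x_k)}=\norm{d_k-\nabla_x\bbE_\Omega f(x_k,\omega)}$ then give
\begin{equation*}
\ip{\nabla F_{\beta_k}(x_k)}{\tilde{s}_k-x_k} \leq \delta\big(F_{\beta_k}(x^\star)-F_{\beta_k}(x_k)\big) + (1+\delta)\,D_\mathcal{X}\,\norm{d_k-\nabla_x\bbE_\Omega f(x_k,\omega)} .
\end{equation*}
Using $g_{\beta_k}\leq g$ (hence $F_{\beta_k}(x^\star)\leq F^\star$), the homotopy bridge $F_{\beta_k}(x_k)\leq F_{\beta_{k-1}}(x_k)+\tfrac12(\beta_{k-1}-\beta_k)L_g^2$ (valid since $\norm{y^\star_{\beta_k}}\leq L_g$ for Lipschitz $g$), then taking expectations and inserting $\bbE\norm{d_k-\nabla_x\bbE_\Omega f(x_k,\omega)}^2\leq Q/m_k^{2/3}$ from Lemma~\ref{lemma:stoch_gradient_convergence} (re-verified for $\rho_k=4/(m_k-\delta)^{2/3}$, whose variance recursion telescopes exactly as before because $\norm{x_k-x_{k-1}}\leq\eta_{k-1}D_\mathcal{X}$ for any $\tilde{s}_{k-1}\in\mathcal{X}$), I arrive at a scalar recursion of the form
\begin{equation*}
a_k \leq \Big(1-\tfrac{9\delta}{m_k}\Big)a_{k-1} + \frac{c_1 D_\mathcal{X}\sqrt{Q}}{m_k^{4/3}} + \frac{c_2 D_\mathcal{X}^2\big(L_f+\norm{A}^2/\beta_0\big)}{m_k^{3/2}} + \frac{c_3\,\beta_0 L_g^2\,\delta}{m_k^{3/2}}, \qquad a_k := \bbE F_{\beta_k}(x_{k+1})-F^\star ,
\end{equation*}
with numerical constants $c_1,c_2,c_3$.

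The remaining work is to unroll this recursion and translate the result. I would prove $a_k\leq 9^{1/3}(C/\delta+\mathcal{E}_1)/m_k^{1/3}$ by induction on $k$: the contraction $1-9\delta/m_k$ together with $m_k=m_{k-1}+\delta$ opens an induction-step slack of order $\delta/m_k^{4/3}$, which must dominate each additive term. Since the dominant terms ($c_1$ and $c_2$ pieces) carry no $\delta$ factor, covering them forces the induction constant to scale like $1/\delta$, which is precisely the $C/\delta$ in the statement; the $\delta$-carrying homotopy term, once summed against the contraction weights, contributes only an $O(\beta_0 L_g^2/\sqrt{m_k})\leq O(\beta_0 L_g^2/m_k^{1/3})$ amount that — being $\delta$-free — is gathered, together with the re-indexing residual (the mismatch between $(m_k-\delta)^{2/3}$ and $(m_k-1)^{2/3}$ in $\rho_k$, and the base case $k=1$), into the additive constant $\mathcal{E}_1$. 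Finally the envelope inequality $F(x_{k+1})\leq F_{\beta_k}(x_{k+1})+\tfrac{\beta_k}{2}L_g^2$ turns this into $\bbE F(x_{k+1})-F^\star\leq 9^{1/3}(C/\delta+\mathcal{E}_1)/m_k^{1/3}+\tfrac{\beta_0 L_g^2}{2\sqrt{m_k}}$, which is the claim; optimizing over $\beta_0$ is then a one-line computation once $\delta$ is fixed.

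The step I expect to be the main obstacle is the simultaneous bookkeeping in the induction: one must check at once that (i) the re-indexed $\rho_k$ still drives the gradient variance to the $O(m_k^{-2/3})$ rate; (ii) the $\delta$-scaled slack absorbs every additive term even though the dominant ones lack a $\delta$ factor — this is exactly why $C$ must be replaced by $C/\delta+\mathcal{E}_1$; and (iii) the base case $k=1$ closes, where $m_1=9$ independently of $\delta$, so all constants have to be chosen uniformly over $\delta\in(0,1]$. The homotopy-bridging and the envelope steps are unchanged from the exact-oracle proof and introduce no new difficulty.
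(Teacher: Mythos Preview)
Your overall plan---descent via smoothness, bound the linear term using the multiplicative oracle, bridge $\beta_k\to\beta_{k-1}$, control the gradient-tracking error via a re-indexed Lemma~\ref{lemma:stoch_gradient_convergence}, solve the scalar recursion by induction, then apply the Nesterov envelope---is exactly the skeleton of the paper's argument, and would yield the stated rate. The paper packages steps (i)--(iii) into a separate result (Theorem~\ref{thm:composite-inexact-mult}, proved via Lemma~\ref{lemma:linear_exact_mult} and Lemma~\ref{lemma:stoch_gradient_convergence_mult}), so that the corollary itself is a two-line application of the envelope inequality $g\leq g_{\beta_k}+\tfrac{\beta_k}{2}L_g^2$.

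There is one genuine technical difference worth flagging. When you bound the linear term you use plain convexity of $F_{\beta_k}$ to get $\delta\big(F_{\beta_k}(x^\star)-F_{\beta_k}(x_k)\big)$, and then handle the homotopy bridge by invoking $\norm{y^\ast_{\beta_k}}\leq L_g$. The paper instead uses the sharper Moreau-envelope inequality $g(z_1)\geq g_{\beta}(z_2)+\ip{\nabla g_\beta(z_2)}{z_1-z_2}+\tfrac{\beta}{2}\norm{y^\ast_\beta(z_2)}^2$ (Lemma~10 of \cite{TranDinh2017}), which produces an extra $-\tfrac{\delta\eta_k\beta_k}{2}\norm{y^\ast_{\beta_k}(Ax_k)}^2$ in the descent step; combined with the bridge term $\tfrac{(1-\delta\eta_k)(\beta_{k-1}-\beta_k)}{2}\norm{y^\ast_{\beta_k}(Ax_k)}^2$, the coefficient $(1-\delta\eta_k)(\beta_{k-1}-\beta_k)-\delta\eta_k\beta_k$ is nonpositive for the chosen schedule, so the whole term \emph{disappears} without ever using the Lipschitz bound. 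This buys two things: the smooth-gap recursion holds for \emph{any} $g$ (it is reused verbatim for the indicator case in Corollary~\ref{thm:indicator-inexact-mult}), and the only place $L_g$ enters is the final envelope step, giving the exact constant $\tfrac{\beta_0 L_g^2}{2}$ in front of $1/\sqrt{m_k}$.

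Relatedly, your reading of $\mathcal{E}_1$ is imprecise: in the paper $\mathcal{E}_1:=F_{\beta_0/\sqrt{9}}(x_1)-F^\star$ is exactly the initial smooth gap (it is what makes the induction base case $k=1$ close, since $\eta_1=1$ regardless of $\delta$), not a catch-all into which the accumulated homotopy contribution $O(\beta_0 L_g^2/\sqrt{m_k})$ can be folded. With your route you would arrive at the stated bound with a larger numerical constant on the $\beta_0 L_g^2/\sqrt{m_k}$ term rather than $\tfrac12$; the paper's cancellation avoids this altogether.
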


\begin{corollary}\label{thm:indicator-inexact-mult}
Assume that $g$ is the indicator function of a simple convex set $\mathcal{K}$. 
Then, the sequence $x_k$ generated by  Algorithm~\ref{alg:stochastic_HFW} with approximate \ref{eqn:lmo} \eqref{eqn:inexact_LMO_mult}, and modifying $\eta_k = \frac{9}{\delta (k-1)+9}$, $\beta_k = \frac{\beta_0}{\sqrt{\delta (k-1)+9}}$ and $\rho_k = \frac{4}{(\delta (k-2)+9)^\frac{2}{3}}$ satisfies:
\begin{align*}
&\bbE \bbE_\Omega f(x_{k+1}, \omega) -  f^\star   \geq -\norm{y^\star} ~ \bbE \mathrm{dist}(Ax_{k+1},\mathcal{K}) \\
&\bbE \bbE_\Omega f(x_{k+1}, \omega) -  f^\star   \leq  9^\frac{1}{3} \frac{\frac{C}{\delta} + \mathcal{E}_{1}}{(\delta (k-1)+9)^\frac{1}{3}} \\
&\bbE \mathrm{dist}(Ax_{k+1}, \mathcal{K})  \leq \frac{2 \beta_0 \norm{y^\star} }{\sqrt{\delta(k-1)+9}}  +\frac{2\sqrt{2\cdot9^\frac13 (\frac{C}{\delta} + \mathcal{E}_1)\beta_0}}{(\delta(k-1)+9)^{\frac{5}{12}}}\\
\end{align*}
\end{corollary}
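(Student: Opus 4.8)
The plan is to follow the same four-step skeleton used for Theorems~\ref{cor:g_lip} and~\ref{cor:indicator-exact}, but to re-derive the per-iteration descent inequality for the smoothed objective $F_{\beta_k}$ with the multiplicative-error oracle~\eqref{eqn:inexact_LMO_mult} in place of the exact \ref{eqn:lmo}. The single structural change is that, writing $g_k := \ip{v_k}{x_k - s_k}$ for the surrogate Frank--Wolfe gap, the inexact direction $\tilde s_k$ only guarantees $\ip{v_k}{x_k - \tilde s_k} \geq \delta\, g_k$; hence in the descent step the term $-\eta_k g_k$ is replaced by $-\delta\eta_k g_k$. First I would reproduce the smoothness/descent estimate for $F_{\beta_k}$ along $x_{k+1}=x_k+\eta_k(\tilde s_k-x_k)$, substitute this $\delta$-weakened gap, control the discrepancy between $v_k$ and $\nabla F_{\beta_k}(x_k)$ exactly via the analogue of Lemma~\ref{lemma:linear_exact_additive} (using $\ip{v_k}{x_k-s_k}\geq\ip{v_k}{x_k-x^\star}$ together with convexity), and absorb the change of smoothing $g_{\beta_k}\to g_{\beta_{k+1}}$ as in the exact proof. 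This yields a one-step recursion for $a_k := \bbE F_{\beta_k}(x_k) - F^\star$ of the form
\begin{align*}
a_{k+1} \leq {}& (1-\delta\eta_k)\, a_k + \eta_k^2\, D_{\mathcal{X}}^2\Big(L_f + \tfrac{\norm{A}^2}{\beta_k}\Big) \\
& + \eta_k\,D_{\mathcal{X}}\,\bbE\norm{d_k - \nabla_x\bbE_\Omega f(x_k,\omega)} + (\text{smoothing drift}).
\end{align*}

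The role of the modified parameters $\eta_k = \tfrac{9}{\delta(k-1)+9}$, $\beta_k = \tfrac{\beta_0}{\sqrt{\delta(k-1)+9}}$, $\rho_k = \tfrac{4}{(\delta(k-2)+9)^{2/3}}$ is precisely to make $\delta(k-1)+9$ play the role that $k+8$ plays in the exact analysis: for $\delta=1$ they collapse to the parameters of Algorithm~\ref{alg:stochastic_HFW}, and for general $\delta$ the factor $\delta$ sitting in front of $\eta_k$ in $(1-\delta\eta_k)$ is compensated by the slower-growing counter. Next I would re-run the gradient-estimator analysis (the analogue of Lemma~\ref{lemma:stoch_gradient_convergence}) with the rescaled $\rho_k$ to get $\bbE\norm{d_k - \nabla_x\bbE_\Omega f(x_k,\omega)}^2 \lesssim (\delta(k-2)+9)^{-2/3}$; the mismatch among the three $\delta$-shifts (exponent $1$ for $\eta_k$, $1/2$ for $\beta_k$, $2/3$ for $\rho_k$, and the off-by-one shift $k-2$ vs.\ $k-1$) does not telescope perfectly and, after summation, contributes the additive constant $\mathcal{E}_1$, while the $\delta$-weakening of the good term replaces the leading constant $C$ by $C/\delta$. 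Solving the recursion by the usual weighting argument (multiply $a_{k+1}$ by an appropriate increasing weight and telescope) then yields $\bbE F_{\beta_k}(x_{k+1}) - F^\star \leq 9^{1/3}\big(\tfrac{C}{\delta}+\mathcal{E}_1\big)\big(\delta(k-1)+9\big)^{-1/3}$.

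To convert this bound on the smoothed gap into the three displayed inequalities I would repeat the strong-duality argument of Theorem~\ref{cor:indicator-exact} (and Corollary~\ref{lemma:indicator-inexact}). Since $g$ is the indicator of $\mathcal{K}$, its smoothing is the Moreau envelope $g_{\beta_k}(Ax) = \tfrac{1}{2\beta_k}\dist^2(Ax,\mathcal{K})$, so $\bbE F_{\beta_k}(x_{k+1}) - F^\star = \bbE\bbE_\Omega f(x_{k+1},\omega) - f^\star + \tfrac{1}{2\beta_k}\bbE\dist^2(Ax_{k+1},\mathcal{K})$. The saddle-point inequality for the Lagrangian at a finite dual solution $y^\star$ (which exists by strong duality) gives $\bbE\bbE_\Omega f(x_{k+1},\omega) - f^\star \geq -\norm{y^\star}\,\bbE\dist(Ax_{k+1},\mathcal{K})$, the first inequality; dropping the nonnegative $\dist^2$ term gives the second; and substituting the first into the smoothed-gap bound, together with Jensen ($\bbE\dist^2\geq(\bbE\dist)^2$), produces the scalar quadratic inequality $\tfrac{r^2}{2\beta_k} - \norm{y^\star}\, r \leq 9^{1/3}(\tfrac{C}{\delta}+\mathcal{E}_1)(\delta(k-1)+9)^{-1/3}$ in $r := \bbE\dist(Ax_{k+1},\mathcal{K})$, whose solution with $\beta_k = \beta_0/\sqrt{\delta(k-1)+9}$ is exactly the third displayed bound (the two terms there being $2\beta_k\norm{y^\star}$ and $\sqrt{2\beta_k B_k}$). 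I expect the main obstacle to be the bookkeeping in the recursion: checking that, with the three differently-scaled and differently-shifted $\delta$-dependent sequences, the weighted telescoping still closes at the claimed $(\delta(k-1)+9)^{-1/3}$ rate and that all residual cross terms collapse into the single constant $\mathcal{E}_1$ — this is where the coupling between $\rho_k$ (variance reduction) and $\beta_k$ (smoothing), already delicate at $\delta=1$, must be re-verified.
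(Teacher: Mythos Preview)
Your plan matches the paper's approach: derive the one-step recursion with the $\delta$-weakened contraction factor $(1-\delta\eta_k)$ via the multiplicative-error analogue of Lemma~\ref{lemma:linear_exact_additive}, re-run the gradient-estimator lemma with the rescaled $\rho_k$, solve the resulting recursion for the smoothed gap, and then convert to the three displayed bounds via the Lagrangian/strong-duality argument exactly as in Theorem~\ref{cor:indicator-exact}. Your treatment of the quadratic in $r=\bbE\dist(Ax_{k+1},\mathcal{K})$ using Jensen is also the paper's route.

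The one point that is off is your explanation of the constant $\mathcal{E}_1$. It is \emph{not} a residual produced by ``mismatch among the three $\delta$-shifts'' failing to telescope. In the paper $\mathcal{E}_1 := F_{\beta_1}(x_1) - F^\star$ is simply the initial smoothed gap, and it survives into the final bound for a structural reason: at $k=1$ the modified step size gives $\delta\eta_1 = \delta < 1$, so the contraction factor $1-\delta\eta_1 = 1-\delta$ does not annihilate the initial error --- whereas in the exact and additive-error analyses $\eta_1=1$ wipes $\mathcal{E}_1$ out at the first step. The paper resolves the recursion by induction rather than weighted telescoping, and the base case $\mathcal{E}_2 \leq (1-\delta)\mathcal{E}_1 + C/9^{4/3} \leq \mathcal{E}_1 + C/\delta$ is precisely where both $\mathcal{E}_1$ and $C/\delta$ enter. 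If you attempt the telescoping route you describe, be aware that the extra $\mathcal{E}_1$ will emerge from the boundary term of the sum, not from cross terms among $\eta_k,\beta_k,\rho_k$.
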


\section{Convergence Rate}

We first prove some key lemmas. This section builds on top of the analysis of \cite{mokhtari2018stochastic} and the homotopy CGM framework. 
All these results are for the inexact oracle with additive error, the exact oracle case can be obtained setting $\delta = 0$.
\begin{lemma}\label{lemma:linear_exact_additive}
For any given iteration $k\geq 1$ of Algorithm~\ref{alg:stochastic_HFW} the following relation holds:
 \begin{align*}
 \ip{ \nabla F_{\beta_k}(x_k)}{s_k - x_k}&\leq \norm{\nabla_x \bbE_\Omega f(x_{k}, \omega)- d_k}D_\mathcal{X} +  f^\star  - \bbE_\Omega f(x_{k}, \omega) + g(Ax^\star) - g_{\beta_k}(Ax_k)\\ &\qquad - \frac{\beta_k}{2} \norm{  y^\ast_{\beta_k}(Ax_k) }^2  + \delta\frac{\eta_k}{2}D_\mathcal{X}^2\left( L_f + \frac{\norm{A}^2}{\beta_k}\right)
 \end{align*}
 where $\delta \geq 0$ is the accuracy of the inexact \ref{eqn:lmo} with additive error.
 \begin{proof}
\begin{align}
\ip{ \nabla F_{\beta_k}(x_k)}{\tilde s_k - x_k}
&= 
\ip{\nabla_x \bbE_\Omega f(x_{k}, \omega)}{\tilde s_k - x_k} + \ip{A^\top \nabla g_{\beta_k}(Ax_k)}{\tilde s_k - x_k }\nonumber\\
&= \ip{\nabla_x \bbE_\Omega f(x_{k}, \omega)}{\tilde s_k - x_k} + \ip{A^\top \nabla g_{\beta_k}(Ax_k)}{\tilde s_k - x_k } + \ip{d_k}{\tilde s_k-x_k}- \ip{d_k}{\tilde s_k-x_k}\nonumber\\
&= \ip{\nabla_x \bbE_\Omega f(x_{k}, \omega) - d_k}{\tilde s_k - x_k} + \ip{d_k + A^\top \nabla g_{\beta_k}(Ax_k)}{\tilde s_k - x_k } \nonumber\\
&\leq \ip{\nabla_x \bbE_\Omega f(x_{k}, \omega) - d_k}{s_k - x_k} \nonumber\\&\qquad + \ip{d_k + A^\top \nabla g_{\beta_k}(Ax_k)}{s_k - x_k } + \delta\frac{\eta_k}{2}D_\mathcal{X}^2\left( L_f + \frac{\norm{A}^2}{\beta_k}\right)\label{eq_proof:lemma_additive_inexact_def}\\
&\leq \ip{\nabla_x \bbE_\Omega f(x_{k}, \omega) - d_k}{s_k - x_k} \nonumber\\&\qquad + \ip{d_k + A^\top \nabla g_{\beta_k}(Ax_k)}{x^\star - x_k } + \delta\frac{\eta_k}{2}D_\mathcal{X}^2\left( L_f + \frac{\norm{A}^2}{\beta_k}\right)\label{eq_proof:lemma_additive_min_oracle}\\
&= \ip{\nabla_x \bbE_\Omega f(x_{k}, \omega) - d_k}{s_k - x_k} + \ip{d_k + A^\top \nabla g_{\beta_k}(Ax_k)}{x^\star - x_k }\nonumber \\
&\qquad+ \ip{\nabla_x \bbE_\Omega f(x_{k}, \omega) }{x^\star - x_k} - \ip{\nabla_x \bbE_\Omega f(x_{k}, \omega) }{x^\star - x_k} + \delta\frac{\eta_k}{2}D_\mathcal{X}^2\left( L_f + \frac{\norm{A}^2}{\beta_k}\right)\nonumber\\
&= \ip{\nabla_x \bbE_\Omega f(x_{k}, \omega) - d_k}{s_k - x^\star} \nonumber\\&\qquad + \ip{\nabla_x \bbE_\Omega f(x_{k}, \omega) + A^\top \nabla g_{\beta_k}(Ax_k)}{x^\star - x_k } + \delta\frac{\eta_k}{2}D_\mathcal{X}^2\left( L_f + \frac{\norm{A}^2}{\beta_k}\right)\nonumber\\
&\leq \norm{\nabla_x \bbE_\Omega f(x_{k}, \omega) - d_k}\norm{s_k - x^\star} \nonumber\\&\qquad + \ip{\nabla_x \bbE_\Omega f(x_{k}, \omega)+ A^\top \nabla g_{\beta_k}(Ax_k)}{x^\star - x_k }+ \delta\frac{\eta_k}{2}D_\mathcal{X}^2\left( L_f + \frac{\norm{A}^2}{\beta_k}\right)\label{eq_proof:lemma_additive_cs} \\
&\leq \norm{\nabla_x \bbE_\Omega f(x_{k}, \omega) - d_k}D_\mathcal{X} \nonumber\\&\qquad + \ip{\nabla_x \bbE_\Omega f(x_{k}, \omega) + A^\top \nabla g_{\beta_k}(Ax_k)}{x^\star - x_k } + \delta\frac{\eta_k}{2}D_\mathcal{X}^2\left( L_f + \frac{\norm{A}^2}{\beta_k}\right)\label{eq_proof:lemma_additive_diam}
\end{align}
where Equation~\eqref{eq_proof:lemma_additive_inexact_def} is the definition of inexact oracle with additive error, Equation~\eqref{eq_proof:lemma_additive_min_oracle} is because $s_k$ is a solution of $\min_{x \in \mathcal{X}} \ip{d_k + A^\top \nabla g_{\beta_k}(Ax_k)}{x}$, Equation~\eqref{eq_proof:lemma_additive_cs} is Cauchy-Schwarz and the Equation~\eqref{eq_proof:lemma_additive_diam} is the definition of diameter.
Now, convexity of $\bbE_\Omega f(x_{k}, \omega)$ ensures $\langle\nabla_x \bbE_\Omega f(x_{k}, \omega),x^\star - x_k \rangle\leq f^\star  - \bbE_\Omega f(x_{k}, \omega)$. 
From Lemma 10 in \citep{TranDinh2017} we have that:
\begin{align}\label{eqn:smoothing-prop-2}
g(z_1) & \geq g_\beta(z_2) + \ip{\nabla g_{\beta}(z_2)}{z_1 - z_2} + \frac{\beta}{2} \norm{y^\ast_\beta(z_2)}^2.
\end{align}
Therefore: 
\begin{align*}
\ip{A^\top \nabla g_{\beta_k} (Ax_k)}{x^\star - x_k} 
&= 
\ip{\nabla g_{\beta_k} (Ax_k)}{Ax^\star - Ax_k} \\
&\leq 
g(Ax^\star) - g_{\beta_k}(Ax_k) - \frac{\beta_k}{2} \norm{  y^\ast_{\beta_k}(Ax_k) }^2. 
\end{align*}
Therefore:
\begin{align*}
\ip{ \nabla F_{\beta_k}(x_k)}{\tilde s_k - x_k}&\leq \norm{\nabla_x \bbE_\Omega f(x_{k}, \omega) - d_k}D_\mathcal{X} +  f^\star ) - \bbE_\Omega f(x_{k}, \omega) + g(Ax^\star) - g_{\beta_k}(Ax_k)\\&\qquad - \frac{\beta_k}{2} \norm{  y^\ast_{\beta_k}(Ax_k) }^2 + \delta\frac{\eta_k}{2}D_\mathcal{X}^2\left( L_f + \frac{\norm{A}^2}{\beta_k}\right)
\end{align*}
 \end{proof}
\end{lemma}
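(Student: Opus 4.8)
The plan is to expand $\langle \nabla F_{\beta_k}(x_k),\, \tilde s_k - x_k\rangle$, where $\tilde s_k$ denotes the (possibly inexact) output of the \ref{eqn:lmo} and the stated bound for $s_k$ is the specialization $\delta = 0$, using the gradient decomposition $\nabla F_{\beta_k}(x_k) = \nabla_x \bbE_\Omega f(x_k,\omega) + A^\top \nabla g_{\beta_k}(Ax_k)$, and then to route every resulting term into one of three quantities: the gradient-estimation error $\norm{\nabla_x \bbE_\Omega f(x_k,\omega) - d_k}$, the first-order convexity gap of $\bbE_\Omega f(\cdot,\omega)$ at $x_k$, and the smoothing gap of $g_{\beta_k}$. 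First I would add and subtract $\langle d_k,\, \tilde s_k - x_k\rangle$, splitting the inner product into $\langle \nabla_x \bbE_\Omega f(x_k,\omega) - d_k,\, \tilde s_k - x_k\rangle$ plus $\langle v_k,\, \tilde s_k - x_k\rangle$, where $v_k = d_k + A^\top \nabla g_{\beta_k}(Ax_k)$ is exactly the vector passed to the linear minimization oracle.

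Next I would treat $\langle v_k,\, \tilde s_k - x_k\rangle$: the additive-error oracle inequality \eqref{eqn:inexact_LMO} lets me pass from $\tilde s_k$ to the exact minimizer $s_k$ at the cost of the term $\delta \tfrac{\eta_k}{2} D_\mathcal{X}^2 (L_f + \norm{A}^2/\beta_k)$, and since $s_k$ minimizes $\langle v_k,\cdot\rangle$ over $\mathcal{X}$, which contains $x^\star$, I may further replace $s_k - x_k$ by $x^\star - x_k$. To recombine the pieces I would add and subtract $\langle \nabla_x \bbE_\Omega f(x_k,\omega),\, x^\star - x_k\rangle$; the $d_k$-dependent contributions then regroup so that the residual stochastic term becomes $\langle \nabla_x \bbE_\Omega f(x_k,\omega) - d_k,\, \tilde s_k - x^\star\rangle$, which Cauchy--Schwarz together with $\tilde s_k, x^\star \in \mathcal{X}$ bounds by $\norm{\nabla_x \bbE_\Omega f(x_k,\omega) - d_k}\, D_\mathcal{X}$.

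What remains is $\langle \nabla_x \bbE_\Omega f(x_k,\omega) + A^\top \nabla g_{\beta_k}(Ax_k),\, x^\star - x_k\rangle$. For the first summand, convexity of $\bbE_\Omega f(\cdot,\omega)$ gives $\langle \nabla_x \bbE_\Omega f(x_k,\omega),\, x^\star - x_k\rangle \le f^\star - \bbE_\Omega f(x_k,\omega)$. For the second, I would invoke the smoothing inequality \eqref{eqn:smoothing-prop-2} (Lemma~10 of \cite{TranDinh2017}) at $z_1 = Ax^\star$ and $z_2 = Ax_k$, which after rearranging yields $\langle \nabla g_{\beta_k}(Ax_k),\, Ax^\star - Ax_k\rangle \le g(Ax^\star) - g_{\beta_k}(Ax_k) - \tfrac{\beta_k}{2}\norm{y^\ast_{\beta_k}(Ax_k)}^2$, together with the identity $\langle A^\top \nabla g_{\beta_k}(Ax_k),\, x^\star - x_k\rangle = \langle \nabla g_{\beta_k}(Ax_k),\, Ax^\star - Ax_k\rangle$. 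Summing the three bounds produces the claimed inequality.

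The step I expect to demand the most care is the bookkeeping around $d_k$: one must keep the $\tilde s_k \to s_k$ substitution confined to $\langle v_k,\cdot\rangle$ --- the only place where the oracle inequality is valid --- while ensuring the leftover stochastic term retains the form $\langle \nabla_x \bbE_\Omega f(x_k,\omega) - d_k,\, (\text{a point of }\mathcal{X}) - x^\star\rangle$, so that it closes off via Cauchy--Schwarz and the diameter bound. Conceptually the key ingredient is the smoothing inequality \eqref{eqn:smoothing-prop-2}: beyond the familiar $g - g_\beta$ gap, it furnishes the extra $-\tfrac{\beta_k}{2}\norm{y^\ast_{\beta_k}(Ax_k)}^2$ term, which is precisely what downstream arguments use to control the feasibility gap through the dual variable.
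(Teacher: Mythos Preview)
Your proposal is correct and follows essentially the same route as the paper: decompose $\nabla F_{\beta_k}$, add/subtract $d_k$, apply the oracle inequality and the \ref{eqn:lmo} minimality to pass to $x^\star$, regroup the stochastic residual via another add/subtract, then close with Cauchy--Schwarz, convexity of $\bbE_\Omega f$, and the smoothing inequality \eqref{eqn:smoothing-prop-2}. The only difference is that you (correctly) keep $\tilde s_k$ in the stochastic residual term and apply the oracle inequality solely to $\langle v_k,\cdot\rangle$, whereas the paper's display \eqref{eq_proof:lemma_additive_inexact_def} replaces $\tilde s_k$ by $s_k$ in \emph{both} inner products simultaneously --- a step not literally licensed by \eqref{eqn:inexact_LMO} --- so your bookkeeping is in fact slightly more careful, though the final bound is identical since both $\tilde s_k$ and $s_k$ lie in $\mathcal{X}$.
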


\begin{lemma}\label{lemma:stoch_gradient_convergence}
For any $k\geq 1$ the estimate of the gradient computed in Algorithm~\ref{alg:stochastic_HFW} satisfies:
\begin{align*}
\mathbb{E}\left[ \norm{\nabla_x \bbE_\Omega f(x_{k}, \omega) - d_k }^2\right] &\leq \frac{Q}{( k +8)^\frac{2}{3}}
\end{align*}
where $Q = \max\left\lbrace \norm{\nabla_x \bbE_\Omega f(x_{1}, \omega) - d_1 }^2 7^\frac{2}{3},16\sigma^2 + 81L_f^2D_\mathcal{X}^2\right\rbrace$
\begin{proof}
This lemma simply applies Lemma 1 and Lemma 17 of \cite{mokhtari2018stochastic} to our different stepsizes. We report all the steps for clarity and completeness. First, we invoke Lemma 1:
\begin{align}
\mathbb{E}\left[ \norm{\nabla_x\bbE_\Omega f(x_{k}, \omega) - d_k }^2\right] &\leq \left(1-\frac{\rho_k}{2} \right) \norm{\nabla_x\bbE_\Omega f(x_{k-1}, \omega) - d_{k-1} }^2 + \rho_k^2\sigma^2 + \frac{2L_f^2D_\mathcal{X}^2\eta_{k-1}^2}{\rho_k}\nonumber\\
&\leq \left(1-\frac{2}{(k+7)^\frac{2}{3}} \right) \norm{\nabla_x \bbE_\Omega f(x_{k-1}, \omega) - d_{k-1} }^2 + \frac{16\sigma^2 + 81L_f^2D_\mathcal{X}^2}{( k+7)^\frac{4}{3}}\label{eq_proof:lemma1}
\end{align}
where we used $\rho_k = \frac{4}{(k+7)^\frac{2}{3}}$.
Now, Lemma 17 of \cite{mokhtari2018stochastic} gives the following solution:
\begin{align*}
\phi_t\leq \frac{Q}{(k+k_0+1)^\alpha}
\end{align*}
 to the recursion
\begin{align*}
\phi_k \leq \left( 1-\frac{c}{(k+k_0)^\alpha}\right)\phi_{k-1} + \frac{b}{(k+k_0)^{2\alpha}}
\end{align*}
where $b\geq 0$, $c>1$, $\alpha\leq 1$, $k_0\geq 0$ and $Q := \max\left\lbrace \phi_1 k_0^\alpha,b/(c-1)\right\rbrace$ 
Applying this lemma to Equation~\eqref{eq_proof:lemma1}  with $k_0 = 7$, $\alpha = \frac{2}{3}$, $c = 2$, $b = 16\sigma^2 + 81L_f^2D_\mathcal{X}^2$ gives:
\begin{align*}
\mathbb{E}\left[ \norm{\nabla_x \bbE_\Omega f(x_{k}, \omega) - d_k }^2\right] &\leq \frac{Q}{( k +8)^\frac{2}{3}}
\end{align*}
where $Q = \max\left\lbrace \norm{\nabla \bbE_\Omega f(x_{1}, \omega) - d_1 }^2 7^\frac{2}{3},16\sigma^2 + 81L_f^2D_\mathcal{X}^2\right\rbrace$
\end{proof}
\end{lemma}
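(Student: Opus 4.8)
The statement bounds the mean-squared error of the averaged gradient estimator $d_k$, so the natural strategy is a one-step recursion followed by unrolling it with a standard sequence lemma. First I would invoke the key one-step inequality from \cite{mokhtari2018stochastic} (their Lemma~1), which governs exactly this kind of momentum-averaged estimator: for the update $d_k = (1-\rho_k)d_{k-1} + \rho_k \nabla_x f(x_k,\omega_k)$ under the bounded-variance assumption and $L_f$-smoothness of $\bbE_\Omega f$, one has
\begin{equation*}
\mathbb{E}\big[\norm{\nabla_x\bbE_\Omega f(x_k,\omega)-d_k}^2\big] \leq \Big(1-\frac{\rho_k}{2}\Big)\norm{\nabla_x\bbE_\Omega f(x_{k-1},\omega)-d_{k-1}}^2 + \rho_k^2\sigma^2 + \frac{2L_f^2 D_\mathcal{X}^2\eta_{k-1}^2}{\rho_k}.
\end{equation*}
This inequality is not specific to the stepsize schedule; it holds for any $\rho_k\in(0,1]$, so it applies verbatim here. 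The only work is to substitute our particular choices $\rho_k = 4/(k+7)^{2/3}$ and $\eta_{k-1} = 9/(k+7)$ and bound the two error terms. Since $\eta_{k-1}^2/\rho_k = (81/(k+7)^2)\cdot((k+7)^{2/3}/4) = \tfrac{81}{4}(k+7)^{-4/3}$, the additive terms combine to $16\sigma^2/(k+7)^{4/3} + 81L_f^2 D_\mathcal{X}^2/(k+7)^{4/3}$, and $1-\rho_k/2 = 1 - 2/(k+7)^{2/3}$, which is exactly Equation~\eqref{eq_proof:lemma1}.

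Next I would solve the resulting scalar recursion. This is precisely the form handled by Lemma~17 of \cite{mokhtari2018stochastic}: a recursion $\phi_k \leq (1-c(k+k_0)^{-\alpha})\phi_{k-1} + b(k+k_0)^{-2\alpha}$ with $b\geq 0$, $c>1$, $\alpha\leq 1$ has solution $\phi_k \leq Q(k+k_0+1)^{-\alpha}$ where $Q = \max\{\phi_1 k_0^\alpha,\ b/(c-1)\}$. Matching parameters: $\alpha = 2/3$, $c = 2$, $k_0 = 7$, and $b = 16\sigma^2 + 81L_f^2 D_\mathcal{X}^2$. Since $c - 1 = 1$, we get $b/(c-1) = 16\sigma^2 + 81L_f^2 D_\mathcal{X}^2$, and $\phi_1 k_0^{2/3} = \norm{\nabla_x\bbE_\Omega f(x_1,\omega)-d_1}^2\, 7^{2/3}$, so $Q$ is exactly as claimed. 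Finally, $(k + k_0 + 1)^{-\alpha} = (k+8)^{-2/3}$, which yields the stated bound.

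There is essentially no obstacle here beyond bookkeeping: the lemma is a direct corollary of two results from \cite{mokhtari2018stochastic}, and the only care needed is to check that the particular stepsize sequences satisfy the hypotheses of those lemmas (in particular $c = 2 > 1$ and $\alpha = 2/3 \leq 1$) and that the arithmetic of combining $\rho_k^2\sigma^2$ and $2L_f^2 D_\mathcal{X}^2 \eta_{k-1}^2/\rho_k$ into a single $O((k+7)^{-4/3})$ term is done correctly. The one conceptual point worth flagging is that the presence of the smoothing/penalty term in the algorithm does \emph{not} affect this lemma at all: $d_k$ depends only on the stochastic gradients of $f$, not on $g_{\beta_k}$, so the analysis of \cite{mokhtari2018stochastic} transfers without modification. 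The interaction between $\rho_k$ and $\beta_k$ only enters later, when this bound is fed into the descent inequality via Lemma~\ref{lemma:linear_exact_additive}.
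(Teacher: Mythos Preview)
Your proposal is correct and follows essentially the same route as the paper: invoke Lemma~1 of \cite{mokhtari2018stochastic} for the one-step recursion, substitute the specific $\rho_k$ and $\eta_{k-1}$, and close with Lemma~17 of \cite{mokhtari2018stochastic} using $k_0=7$, $\alpha=2/3$, $c=2$, $b=16\sigma^2+81L_f^2D_\mathcal{X}^2$. Your added remarks (that $d_k$ is unaffected by the smoothing term and that the hypotheses $c>1$, $\alpha\le1$ hold) are accurate and only make the argument more complete.
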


\subsection{Proof of Theorem~\ref{thm:composite}}
We prove Theorem~\ref{thm:composite} with the oracle with additive error. The proof without additive error can be obtained with $\delta = 0$.
\begin{theorem}\label{thm:composite}
The sequence $x_k$ generated by Algorithm~\ref{alg:stochastic_HFW} satisfies the following bound for $k\geq 1$:
\begin{align*}
\bbE F_{\beta_k}(x_{k+1}) - F^\star 
\leq  9^\frac{1}{3} \frac{C_\delta}{(k+8)^\frac{1}{3}},
\end{align*}
where $C_\delta := \frac{81}{2}D_{\mathcal{X}}^2(L_f + \beta_0\norm{A}^2)(1+\delta) + 9D_\mathcal{X}\sqrt{Q}$, $Q = \max\left\lbrace 4\norm{\nabla \bbE_\Omega f(x_1, \omega) - d_1}^2, 16\sigma^2 + 2L_f^2D_\mathcal{X}^2 \right\rbrace$ and $\delta\geq 0$.
\end{theorem}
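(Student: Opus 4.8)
\textbf{Proof proposal for Theorem~\ref{thm:composite}.}
The plan is to run a standard Frank--Wolfe descent argument on the smoothed objective $F_{\beta_k}$, but with two extra complications: the smoothing parameter $\beta_k$ changes from iteration to iteration, and the gradient is only estimated by $d_k$. First I would start from the $1/\beta_k$-smoothness of $g_{\beta_k}$, which makes $F_{\beta_k}$ an $(L_f + \|A\|^2/\beta_k)$-smooth function. Applying the descent lemma to the update $x_{k+1} = x_k + \eta_k(\tilde s_k - x_k)$ gives
\begin{equation*}
F_{\beta_k}(x_{k+1}) \le F_{\beta_k}(x_k) + \eta_k \ip{\nabla F_{\beta_k}(x_k)}{\tilde s_k - x_k} + \frac{\eta_k^2}{2}\Bigl(L_f + \frac{\|A\|^2}{\beta_k}\Bigr) D_\mathcal{X}^2.
\end{equation*}
Then I would invoke Lemma~\ref{lemma:linear_exact_additive} to bound the inner-product term by $\|\nabla_x \bbE_\Omega f(x_k,\omega) - d_k\| D_\mathcal{X} + f^\star - \bbE_\Omega f(x_k,\omega) + g(Ax^\star) - g_{\beta_k}(Ax_k) - \tfrac{\beta_k}{2}\|y^\ast_{\beta_k}(Ax_k)\|^2$ plus the additive-error term. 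The key observation is that $f^\star + g(Ax^\star) = F^\star$ and $\bbE_\Omega f(x_k,\omega) + g_{\beta_k}(Ax_k) = F_{\beta_k}(x_k)$, so the descent inequality collapses to a recursion of the form
\begin{equation*}
F_{\beta_k}(x_{k+1}) - F^\star \le (1-\eta_k)\bigl(F_{\beta_k}(x_k) - F^\star\bigr) + \eta_k \|\nabla_x \bbE_\Omega f(x_k,\omega) - d_k\| D_\mathcal{X} - \eta_k \tfrac{\beta_k}{2}\|y^\ast_{\beta_k}(Ax_k)\|^2 + \tfrac{\eta_k^2}{2}(1+\delta)\Bigl(L_f + \tfrac{\|A\|^2}{\beta_k}\Bigr)D_\mathcal{X}^2.
\end{equation*}

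Next I would take expectations, use Lemma~\ref{lemma:stoch_gradient_convergence} to control $\bbE\|\nabla_x\bbE_\Omega f(x_k,\omega) - d_k\| \le \sqrt{\bbE\|\cdot\|^2} \le \sqrt{Q}/(k+8)^{1/3}$ via Jensen, and discard the nonpositive $-\eta_k\tfrac{\beta_k}{2}\|y^\ast_{\beta_k}(Ax_k)\|^2$ term. The remaining subtlety is that the left-hand side carries $F_{\beta_k}$ while the previous iterate's bound is on $F_{\beta_{k-1}}$; since $\beta_k < \beta_{k-1}$ and $g_\beta$ is monotone nonincreasing in $\beta$ (one sees this directly from the $\max$ definition: shrinking $\beta$ removes a subtracted nonnegative term), we have $F_{\beta_k}(x_k) \le F_{\beta_{k-1}}(x_k)$, so the bound transfers cleanly across the change of smoothing parameter. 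Plugging in $\eta_k = 9/(k+8)$ and $\beta_k = \beta_0/(k+8)^{1/2}$, the per-step ``error'' term becomes $O(k^{-2}) \cdot (k+8)^{1/2} = O(k^{-3/2})$ from the penalty part plus $O(k^{-2})$ from the $L_f$ part, and the gradient-error term is $O(k^{-4/3})$. I would then define $h_k := \bbE F_{\beta_{k-1}}(x_k) - F^\star$ (or a suitably indexed variant) and unroll the recursion $h_{k+1} \le (1-\eta_k) h_k + (\text{terms of order } k^{-4/3})$.

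The main obstacle — and the part requiring real care — is the final recursion-unrolling step: showing that $h_{k+1} \le (1 - \tfrac{9}{k+8})h_k + \tfrac{a}{(k+8)^{4/3}}$ (for an appropriate constant $a$ aggregating the $D_\mathcal{X}^2(L_f + \beta_0\|A\|^2)(1+\delta)$ and $D_\mathcal{X}\sqrt{Q}$ contributions) implies $h_{k+1} \le 9^{1/3} C_\delta/(k+8)^{1/3}$. This is an induction on $k$: assuming the bound at step $k$, one needs $(1-\tfrac{9}{k+8})\tfrac{9^{1/3}C_\delta}{(k+8)^{1/3}} + \tfrac{a}{(k+8)^{4/3}} \le \tfrac{9^{1/3}C_\delta}{(k+9)^{1/3}}$, which reduces to a scalar inequality comparing $(k+8)^{-1/3}(1 - 9/(k+8))$ with $(k+9)^{-1/3}$ after absorbing $a$ into $C_\delta$; here the choice of the shift ``$+8$'' and the coefficient $9$ in $\eta_k$ are exactly what makes the telescoping work, and one must check the base case $k=1$ separately (this is presumably why $\beta_0$ appears and why $Q$ is defined with the $\max$ against the initial gradient error). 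I expect this to be the delicate bookkeeping part; everything upstream is the routine FW descent recipe adapted to the homotopy and stochastic-gradient setting established in Lemmas~\ref{lemma:linear_exact_additive} and~\ref{lemma:stoch_gradient_convergence}.
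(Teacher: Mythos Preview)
Your overall structure is right, but there is a genuine gap at the step where you switch from $F_{\beta_k}(x_k)$ to $F_{\beta_{k-1}}(x_k)$. You correctly observe that $g_\beta$ is nonincreasing in $\beta$, but then the conclusion goes the wrong way: since $\beta_k < \beta_{k-1}$, one gets $g_{\beta_k}(Ax_k) \ge g_{\beta_{k-1}}(Ax_k)$, hence $F_{\beta_k}(x_k) \ge F_{\beta_{k-1}}(x_k)$, not $\le$. So you cannot ``transfer cleanly'' the bound across the change of smoothing parameter by monotonicity alone; the inequality you need points the other direction.

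The paper fixes this by not discarding the negative term $-\eta_k\tfrac{\beta_k}{2}\|y^\ast_{\beta_k}(Ax_k)\|^2$ prematurely. Instead it uses the sharper relation (from Lemma~10 of \cite{TranDinh2017})
\[
g_{\beta_k}(Ax_k) \le g_{\beta_{k-1}}(Ax_k) + \tfrac{\beta_{k-1}-\beta_k}{2}\|y^\ast_{\beta_k}(Ax_k)\|^2,
\]
which does give an upper bound on $F_{\beta_k}(x_k)$ in terms of $F_{\beta_{k-1}}(x_k)$, but at the cost of an extra positive $\|y^\ast_{\beta_k}(Ax_k)\|^2$ term. This extra cost is then combined with the retained negative term, yielding a coefficient $(1-\eta_k)(\beta_{k-1}-\beta_k) - \eta_k\beta_k$ in front of $\tfrac{1}{2}\|y^\ast_{\beta_k}(Ax_k)\|^2$. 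The specific choice $\eta_k = 9/(k+8)$, $\beta_k = \beta_0/(k+8)^{1/2}$ is exactly what makes this coefficient negative, so the whole $\|y^\ast_{\beta_k}\|^2$ contribution can then be dropped. In short: the negative term you threw away is precisely the budget that pays for the change of smoothing parameter. Once you make this correction, the rest of your plan (Jensen plus Lemma~\ref{lemma:stoch_gradient_convergence}, then the scalar induction on $\mathcal{E}_{k+1} \le (1-\tfrac{9}{k+8})\mathcal{E}_k + C_\delta/(k+8)^{4/3}$) matches the paper.
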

\begin{proof}
Note that Theorem~\ref{thm:composite} can be obtained as a special case setting $\delta = 0$.
First, we use the smoothness of $F_{\beta_k}$ to upper bound the progress. 
Note that $F_{\beta_k}$ is $(L_f + \norm{A}^2/\beta_k)$-smooth. 
\begin{align}\label{eq_proof:upper_bound_smooth}
F_{\beta_k}(x_{k+1}) &\leq F_{\beta_k}(x_k)  +\eta_k 
\ip{\nabla F_{\beta_k}(x_k) }{\tilde s_k - x_k}
+ \frac{\eta_k^2}{2}\norm{\tilde s_k - x_k}^2(L_f + \frac{\norm{A}^2}{\beta_k}) \nonumber \\
&\leq F_{\beta_k}(x_k)  +\eta_k 
\ip{\nabla F_{\beta_k}(x_k) }{\tilde s_k - x_k}
+ \frac{\eta_k^2}{2}D_{\mathcal{X}}^2(L_f + \frac{\norm{A}^2}{\beta_k}),
\end{align}
where $s_k$ denotes the atom selected by the \ref{eqn:lmo}, and the second inequality follows since $s_k\in\mathcal{X}$.  
We now apply Lemma~\ref{lemma:linear_exact_additive} and obtain:
\begin{align}
F_{\beta_k}(x_{k+1}) 
&\leq 
F_{\beta_k}(x_k) +\eta_k \left( f^\star  - \bbE_\Omega f(x_k, \omega) + g(Ax^\star) - g_{\beta_k}(Ax_k) - \frac{\beta_k}{2} \norm{ \nabla y^\ast_{\beta_k}(Ax_k) }^2 \right) \nonumber \\
& \qquad + \frac{\eta_k^2}{2}D_{\mathcal{X}}^2(L_f + \frac{\norm{A}^2}{\beta_k})(1+\delta) + \eta_k \norm{\nabla_x \bbE_\Omega f(x_k, \omega) - d_k}D_\mathcal{X} \label{eqn:proof-recursion-1} \\
& = 
(1 - \eta_k) F_{\beta_k}(x_k) +\eta_k  F^\star  - \frac{\eta_k \beta_k}{2} \norm{ \nabla y^\ast_{\beta_k}(Ax_k) }^2 
+ \frac{\eta_k^2}{2}D_{\mathcal{X}}^2(L_f + \frac{\norm{A}^2}{\beta_k})(1+\delta) \nonumber \\
&\qquad + \eta_k \norm{\nabla_x \bbE_\Omega f(x_k, \omega) - d_k}D_\mathcal{X} . \nonumber
\end{align}

Now, using Lemma 10 of~\cite{TranDinh2017} we get:
\begin{equation}
g_{\beta} (z_1) \leq g_{\gamma}(z_1) + \frac{\gamma - \beta}{2} \norm{y^\ast_\beta(z_1)}^2 \label{eqn:smoothing-prop-3}
\end{equation}
 and therefore:
\begin{align*}
F_{\beta_k}(x_k) 
& = 
\bbE_\Omega f(x_k, \omega) + g_{\beta_k}(Ax_k) \\
& \leq 
\bbE_\Omega f(x_k, \omega) + g_{\beta_{k-1}}(Ax_k) + \frac{\beta_{k-1}-\beta_k}{2}\norm{y^\ast_{\beta_k}(Ax_k)}^2 \\
& = 
F_{\beta_{k-1}}(x_k) + \frac{\beta_{k-1}-\beta_k}{2}\norm{y^\ast_{\beta_k}(Ax_k)}^2.
\end{align*}
We combine this with \eqref{eqn:proof-recursion-1} and subtract $F^\star $ from both sides to get
\begin{align*}
F_{\beta_k}(x_{k+1}) -F^\star 
&\leq 
(1 - \eta_k) \big( F_{\beta_{k-1}}(x_k) -  F^\star  \big) + \frac{\eta_k^2}{2}D_{\mathcal{X}}^2(L_f + \frac{\norm{A}^2}{\beta_k})(1+\delta) \\ 
& \qquad +  \big( (1-\eta_k) (\beta_{k-1} - \beta_k) - \eta_k \beta_k \big) \frac{1}{2} \norm{ y^\ast_{\beta_k}(Ax_k) }^2 + \eta_k \norm{\nabla_x \bbE_\Omega f(x_k, \omega) - d_k}D_\mathcal{X} .
\end{align*}

Let us choose $\eta_k$ and $\beta_k$ in a way to vanish the last term. 
By choosing $\eta_k = \frac{9}{k+8}$ and $\beta_k = \frac{\beta_0}{(k+8)^{\frac{1}{2}}}$ for $k \geq 1$ with some $\beta_0 > 0$, we get $(1-\eta_k)(\beta_{k-1}-\beta_k) - \eta_k\beta_k < 0$. Hence, we end up with 
\begin{align*}
F_{\beta_k}(x_{k+1}) - F^\star 
&\leq 
(1 - \eta_k) \big( F_{\beta_{k-1}}(x_k) -  F^\star  \big) + \frac{\eta_k^2}{2}D_{\mathcal{X}}^2(L_f + \frac{\norm{A}^2}{\beta_k})(1+\delta) \\
&\qquad + \eta_k \norm{\nabla_x \bbE_\Omega f(x_k, \omega) - d_k}D_\mathcal{X} .
\end{align*}

We now compute the expectation, use Jensen inequality and use Lemma~\ref{lemma:stoch_gradient_convergence} to obtain the final recursion:
\begin{align*}
\mathbb{E} F_{\beta_k}(x_{k+1}) - F^\star &\leq (1 - \eta_k) \big( \mathbb{E}F_{\beta_{k-1}}(x_k) -  F^\star  \big) + \frac{\eta_k^2}{2}D_{\mathcal{X}}^2(L_f + \frac{\norm{A}^2}{\beta_k})(1+\delta)\\
&\qquad + \eta_k \mathbb{E}\norm{\nabla_x \bbE_\Omega f(x_k, \omega) - d_k}D_\mathcal{X}  \\
&\leq (1 - \eta_k) \big( \mathbb{E}F_{\beta_{k-1}}(x_k) -  F^\star  \big) + \frac{\eta_k^2}{2}D_{\mathcal{X}}^2(L_f + \frac{\norm{A}^2}{\beta_k})(1+\delta)\\
&\qquad+ \eta_k \sqrt{\mathbb{E}\norm{\nabla \bbE_\Omega f(x_k, \omega) - d_k}^2}D_\mathcal{X}  \\
&\leq (1 - \eta_k) \big(\mathbb{E} F_{\beta_{k-1}}(x_k) -  F^\star  \big) + \frac{\eta_k^2}{2}D_{\mathcal{X}}^2(L_f + \frac{\norm{A}^2}{\beta_k})(1+\delta)\\
&\qquad + \frac{9D_\mathcal{X}\sqrt{Q}}{(k+8)^{\frac{4}{3}}}
\end{align*}

Now, note that: 
\begin{align*}
\frac{\eta_k^2}{2}D_{\mathcal{X}}^2(L_f + \frac{\norm{A}^2}{\beta_k}) &= \frac{\eta_k^2}{2}D_{\mathcal{X}}^2L_f + \frac{\eta_k^2}{2}D_{\mathcal{X}}^2\frac{\norm{A}^2}{\beta_k}\\
&=\frac{\frac{81}{2}}{(k+8)^2}D_{\mathcal{X}}^2L_f + \frac{\frac{81}{2}}{(k+8)^{\frac{3}{2}}}\beta_0 D_{\mathcal{X}}^2\norm{A}^2\\
&\leq\frac{\frac{81}{2}}{(k+8)^{\frac{4}{3}}}D_{\mathcal{X}}^2L_f + \frac{\frac{81}{2}}{(k+8)^{\frac{4}{3}}}\beta_0 D_{\mathcal{X}}^2\norm{A}^2
\end{align*}
Therefore:
\begin{align*}
\mathbb{E} F_{\beta_k}(x_{k+1}) - F^\star 
&\leq \left(1 - \frac{9}{k+8}\right) \big(\mathbb{E} F_{\beta_{k-1}}(x_k) -  F^\star  \big) + \frac{\frac{81}{2}D_{\mathcal{X}}^2(L_f + \beta_0\norm{A}^2)(1+\delta) + 9D_\mathcal{X}\sqrt{Q}}{(k+8)^{\frac{4}{3}}}
\end{align*}
For simplicity, let $C_\delta := \frac{81}{2}D_{\mathcal{X}}^2(L_f + \beta_0\norm{A}^2)(1+\delta) + 9D_\mathcal{X}\sqrt{Q}$ and $\mathcal{E}_{k+1}:= \mathbb{E} F_{\beta_k}(x_{k+1}) - F^\star $. Then, we need to solve the following recursive equation:
\begin{align}\label{eq_proof:induction_rec}
\mathcal{E}_{k+1}\leq \left(1 - \frac{9}{k+8}\right)\mathcal{E}_{k} + \frac{C_\delta}{(k+8)^{\frac{4}{3}}}
\end{align}
Let the induction hypothesis for $k\geq 1$ be:
\begin{align*}
\mathcal{E}_{k+1}\leq 9^\frac{1}{3} \frac{C_\delta}{(k+8)^\frac{1}{3}}
\end{align*}
For the base case $k=1$ we need to prove $\mathcal{E}_{2}\leq C_\delta$.
From Equation~\eqref{eq_proof:induction_rec} we have $\mathcal{E}_{2}\leq  \frac{C_\delta}{(9)^{\frac{4}{3}}}< C_\delta$ as $9^{\frac{4}{3}}>1$
Now:
\begin{align*}
\mathcal{E}_{k+1}&\leq \left(1 - \frac{9}{k+8}\right)\mathcal{E}_{k} + \frac{C_\delta}{(k+8)^{\frac{4}{3}}}\\
&\leq \left(1 - \frac{9}{k+8}\right)9^\frac{1}{3} \frac{C_\delta}{(k+7)^\frac{1}{3}}+ \frac{C_\delta}{(k+8)^{\frac{4}{3}}}\\
&\leq \left(1 - \frac{9}{k+8}\right)9^\frac{1}{3} \frac{C_\delta}{(k+7)^\frac{1}{3}}+ 9^\frac{1}{3}\frac{C_\delta}{(k+8)^{\frac{4}{3}}}\\
&\leq \left(1 - \frac{9}{k+8}\right)9^\frac{1}{3} \frac{C_\delta}{(k+7)^\frac{1}{3}}+ 9^\frac{1}{3}\frac{C_\delta}{(k+7)^{\frac{1}{3}}(k+8)}\\
&= 9^\frac{1}{3}\frac{C_\delta}{(k+8)(k+7)^{\frac{1}{3}}}(k-1)\\
&\leq 9^\frac{1}{3}\frac{C_\delta}{(k+8)^{\frac{1}{3}}}\\
\end{align*}
\end{proof}

\begin{repcorollary}{cor:g_lip}
Assume that $g: \R^d \to \R$ is $L_g$-Lipschitz continuous. 
Then, the sequence $x_k$ generated by Algorithm~\ref{alg:stochastic_HFW} satisfies the following convergence bound for $k\geq1$:
\begin{align*}
\bbE F(x_{k+1}) - F^\star 
\leq 9^\frac{1}{3} \frac{C_\delta}{(k+8)^\frac{1}{3}} + \frac{\beta_0 L_g^2}{2\sqrt{k+8}}.
\end{align*}
\begin{proof}
The proof is trivial and the technique comes from  \cite{yurtsever2018conditional}. We report it for completeness. If $g:\R^d \to \R \cup \{+\infty\}$ is $L_g$-Lipschitz continuous from equation~(2.7) in \cite{Nesterov2005} and the duality between Lipshitzness and bounded support (\textit{cf.} Lemma~5 in \cite{Dunner2016}) we have:
\begin{equation}\label{eqn:smoothing-sandwich}
g_{\beta} (z) \leq g (z) \leq g_{\beta} (z) + \frac{\beta}{2} L_g^2
\end{equation}
Using this fact, we write:
\begin{align*}
g(Ax_{k+1}) &\leq g_{\beta_k}(Ax_{k+1}) + \frac{\beta_kL_g^2 }{2} \\
&= g_{\beta_k}(Ax_{k+1}) + \frac{\beta_0 L_g^2}{2\sqrt{k+8}} . 
\end{align*}
We complete the proof by adding $\bbE \bbE_\Omega f(x_{k+1}, \omega) - F^\star $ to both sides:
\begin{align*}
\bbE F(x_{k+1}) - F^\star 
&\leq 
\bbE F_{\beta_k}(x_{k+1}) - F^\star  + \frac{\beta_0 L_g^2}{2\sqrt{k+8}}\\
&\leq 
 9^\frac{1}{3} \frac{C_\delta}{(k+8)^\frac{1}{3}} + \frac{\beta_0 L_g^2}{2\sqrt{k+8}}.
\end{align*}
\end{proof}
\end{repcorollary}

\begin{repcorollary}{cor:indicator-exact}
Assume that $g: \R^d \to \R$ is the indicator function of a simple convex set $\mathcal{K}$. 
Then, the sequence $x_k$ generated by Algorithm~\ref{alg:stochastic_HFW} satisfies:
\begin{align*}
\bbE \bbE_\Omega f(x_{k}, \omega) - f^\star  & \geq -\norm{y^\star} ~ \bbE \mathrm{dist}(Ax_k,\mathcal{K}) \\
\bbE \bbE_\Omega f(x_{k}, \omega) - f^\star  & \leq  9^\frac{1}{3} \frac{C_\delta}{(k+8)^\frac{1}{3}} \\
\bbE \mathrm{dist}(Ax_{k}, \mathcal{K}) & \leq \frac{2 \beta_0 \norm{y^\star} }{\sqrt{k+8}} +\frac{2\sqrt{2\cdot9^\frac13 C_\delta\beta_0}}{(k+8)^{\frac{5}{12}}}
\end{align*}
\begin{proof}
We adapt to our rate the proof technique of Theorem 4.3 in \cite{yurtsever2018conditional}.
From the Lagrange saddle point theory, we know that the following bound holds $\forall x \in \mathcal{X}$ and $\forall r \in \mathcal{K}$:
\begin{align*}
f^\star  \leq \mathcal{L}(x,r,y^\star) &= \bbE_\Omega f(x, \omega) + \ip{y_\star}{Ax - r} \\
&\leq \bbE_\Omega f(x, \omega) + \norm{y_\star}\norm{Ax - r},
\end{align*}
Since $x_{k+1} \in \mathcal{X}$ and taking the expectation, we get
\begin{align}
\bbE \bbE_\Omega f(x_{k+1}, \omega) - f^\star  &\geq - \bbE \min_{r\in\mathcal{K}}\norm{y^\star}\norm{Ax_{k+1} - r} \nonumber\\
&=  - \norm{y^\star}\bbE \mathrm{dist}(Ax_{k+1}, \mathcal{K}). \label{eqn:obj-lower-bound}
\end{align}
This proves the first bound in Corollary~\ref{cor:indicator-exact}.

The second bound directly follows by Theorem~\ref{thm:composite} as
\begin{align*}
\bbE \bbE_\Omega f(x_{k+1}, \omega) - f^\star  &\bbE \leq \bbE \bbE_\Omega f(x_{k+1}, \omega) - f^\star  +  \frac{1}{2\beta_k}\bbE \left[\mathrm{dist}(Ax_{k+1}, \mathcal{K})\right]^2\\
&\leq\bbE  F_{\beta_k}(x_{k+1}) - F^\star \\
&\leq  9^\frac{1}{3} \frac{C_\delta}{(k+8)^\frac{1}{3}}.
\end{align*}
Now, we combine this with \eqref{eqn:obj-lower-bound}, and we get
\begin{align*}
- \norm{y^\star}\bbE \mathrm{dist}(Ax_{k+1}, \mathcal{K}) + \frac{1}{2\beta_k} \bbE \left[\mathrm{dist}(Ax_{k+1}, \mathcal{K}) \right]^2
&\leq  9^\frac{1}{3} \frac{C_\delta}{(k+8)^\frac{1}{3}}\\
\end{align*}
This is a second order inequality in terms of $\bbE \mathrm{dist}(Ax_k, \mathcal{K})$. Solving this inequality, we get
\begin{align*}
\bbE \mathrm{dist}(Ax_{k+1}, \mathcal{K})  
& \leq \frac{2 \beta_0 \norm{y^\star} }{\sqrt{k+8}} +\frac{2\sqrt{2\cdot9^\frac13 C_\delta\beta_0}}{(k+8)^{\frac{5}{12}}}.
\end{align*}
\end{proof}
\end{repcorollary}
\section{Inexact Oracle with Multiplicative Error}
\begin{lemma}\label{lemma:linear_exact_mult}
For any given iteration $k\geq 1$ of Algorithm~\ref{alg:stochastic_HFW} the following relation holds:
 \begin{align*}
 \ip{ \nabla F_{\beta_k}(x_k)}{\tilde s_k - x_k}&\leq \norm{\nabla_x\bbE_\Omega f(x_{k}, \omega) - d_k}D_\mathcal{X} \\&\qquad +  \delta\left[f^\star  - \bbE_\Omega f(x_{k}, \omega) + g(Ax^\star) - g_{\beta_k}(Ax_k) - \frac{\beta_k}{2} \norm{  y^\ast_{\beta_k}(Ax_k) }^2 \right]
 \end{align*}
 where $\delta \in (0,1]$ is the accuracy of the inexact \ref{eqn:lmo} with multiplicative error.
 \begin{proof}
\begin{align}
\ip{ \nabla F_{\beta_k}(x_k)}{\tilde s_k - x_k}
&= 
\ip{\nabla_x \bbE_\Omega f(x_{k}, \omega)}{\tilde s_k - x_k} + \ip{A^\top \nabla g_{\beta_k}(Ax_k)}{\tilde s_k - x_k }\nonumber\\
&= \ip{\nabla_x\bbE_\Omega f(x_{k}, \omega)}{\tilde s_k - x_k} + \ip{A^\top \nabla g_{\beta_k}(Ax_k)}{\tilde s_k - x_k } + \ip{d_k}{\tilde s_k-x_k}- \ip{d_k}{\tilde s_k-x_k}\nonumber\\
&= \ip{\nabla_x\bbE_\Omega f(x_{k}, \omega) - d_k}{\tilde s_k - x_k} + \ip{d_k + A^\top \nabla g_{\beta_k}(Ax_k)}{\tilde s_k - x_k } \nonumber\\
&\leq \ip{\nabla_x\bbE_\Omega f(x_{k}, \omega) - d_k}{\tilde s_k - x_k} + \delta\ip{d_k + A^\top \nabla g_{\beta_k}(Ax_k)}{s_k - x_k }\label{eq_proof:lemma_mult_inexact_def} \\
&\leq \ip{\nabla_x \bbE_\Omega f(x_{k}, \omega) - d_k}{\tilde s_k - x_k} + \delta \ip{d_k + A^\top \nabla g_{\beta_k}(Ax_k)}{x^\star - x_k }\label{eq_proof:lemma_mult_min_oracle} \\
&= \ip{\nabla_x \bbE_\Omega f(x_{k}, \omega) - d_k}{\tilde s_k - x_k} + \delta \ip{d_k + A^\top \nabla g_{\beta_k}(Ax_k)}{x^\star - x_k } \nonumber\\
&\quad+ \delta\ip{\nabla_x \bbE_\Omega f(x_{k}, \omega) }{x^\star - x_k} - \delta\ip{\nabla_x\bbE_\Omega f(x_{k}, \omega) }{x^\star - x_k}\nonumber\\
&= \ip{\nabla_x \bbE_\Omega f(x_{k}, \omega) - d_k}{\tilde s_k - x_k -\delta x^\star +\delta x_k}  \nonumber\\&\qquad + \delta \ip{\nabla_x \bbE_\Omega f(x_{k}, \omega) + A^\top \nabla g_{\beta_k}(Ax_k)}{x^\star - x_k }\nonumber \\
&\leq \norm{\nabla_x \bbE_\Omega f(x_{k}, \omega) - d_k}\norm{\tilde s_k - ((1- \delta )x_k+\delta x^\star)} \nonumber\\&\qquad + \delta\ip{\nabla_x \bbE_\Omega f(x_{k}, \omega) + A^\top \nabla g_{\beta_k}(Ax_k)}{x^\star - x_k }\label{eq_proof:lemma_mult_cs} \\
&\leq \norm{\nabla_x \bbE_\Omega f(x_{k}, \omega)- d_k}D_\mathcal{X} + \delta \ip{\nabla_x \bbE_\Omega f(x_{k}, \omega) + A^\top \nabla g_{\beta_k}(Ax_k)}{x^\star - x_k } \label{eq_proof:lemma_mult_diam}
\end{align} 
where the Equation~\eqref{eq_proof:lemma_mult_inexact_def} is the definition of inexact oracle with multiplicative error, Equation~\eqref{eq_proof:lemma_mult_min_oracle} is because $s_k$ is a solution of $\min_{x \in \mathcal{X}} \ip{d_k + A^\top \nabla g_{\beta_k}(Ax_k)}{x}$, Equation~\eqref{eq_proof:lemma_mult_cs} is cauchy-schwarz and Equation~\eqref{eq_proof:lemma_mult_diam} is the diameter definition noting that $(1- \delta )x_k+\delta x^\star\in\mathcal{X}$ as it is a convex combination of elements in $\mathcal{X}$.

Now, convexity of $\bbE_\Omega f(x_{k}, \omega)$ ensures $\langle\nabla \bbE_\Omega f(x_{k}, \omega),x^\star - x_k \rangle\leq f^\star  - \bbE_\Omega f(x_{k}, \omega)$. 
Using property \eqref{eqn:smoothing-prop-2}, we have 
\begin{align*}
\ip{A^\top \nabla g_{\beta_k} (Ax_k)}{x^\star - x_k} 
&= 
\ip{\nabla g_{\beta_k} (Ax_k)}{Ax^\star - Ax_k} \\
&\leq 
g(Ax^\star) - g_{\beta_k}(Ax_k) - \frac{\beta_k}{2} \norm{  y^\ast_{\beta_k}(Ax_k) }^2. 
\end{align*}
Therefore:
\begin{align*}
\ip{ \nabla F_{\beta_k}(x_k)}{\tilde s_k - x_k}&\leq \norm{\nabla_x\bbE_\Omega f(x_{k}, \omega) - d_k}D_\mathcal{X}  \nonumber\\&\qquad +  \delta\left[f^\star  - \bbE_\Omega f(x_{k}, \omega) + g(Ax^\star) - g_{\beta_k}(Ax_k) - \frac{\beta_k}{2} \norm{  y^\ast_{\beta_k}(Ax_k) }^2 \right]\\
\end{align*}
 \end{proof}
\end{lemma}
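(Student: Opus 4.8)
The plan is to mirror the proof of Lemma~\ref{lemma:linear_exact_additive}, replacing the additive-error step with the multiplicative-error guarantee \eqref{eqn:inexact_LMO_mult} and handling carefully the convex combination that this substitution introduces in the gradient-error term. First I would split the gradient of the smoothed objective as $\nabla F_{\beta_k}(x_k) = \nabla_x \bbE_\Omega f(x_k,\omega) + A^\top \nabla g_{\beta_k}(Ax_k)$, then add and subtract the running estimator $d_k$ to separate a \emph{gradient-error} inner product $\ip{\nabla_x \bbE_\Omega f(x_k,\omega) - d_k}{\tilde s_k - x_k}$ from the \emph{oracle} inner product $\ip{v_k}{\tilde s_k - x_k}$, where $v_k = d_k + A^\top \nabla g_{\beta_k}(Ax_k)$ is exactly the direction fed to the \ref{eqn:lmo}.

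Next I would invoke the multiplicative-error condition \eqref{eqn:inexact_LMO_mult}, $\ip{v_k}{\tilde s_k - x_k} \le \delta \ip{v_k}{s_k - x_k}$, and then use that $s_k$ minimizes $\ip{v_k}{\cdot}$ over $\mathcal{X}$ together with feasibility of $x^\star$ to replace $s_k$ by $x^\star$ (since $\delta > 0$ the inequality is preserved): $\delta \ip{v_k}{s_k - x_k} \le \delta \ip{v_k}{x^\star - x_k}$. Substituting $v_k = d_k + A^\top \nabla g_{\beta_k}(Ax_k)$ and adding and subtracting $\delta \ip{\nabla_x \bbE_\Omega f(x_k,\omega)}{x^\star - x_k}$ turns this into $\delta \ip{\nabla_x \bbE_\Omega f(x_k,\omega) + A^\top \nabla g_{\beta_k}(Ax_k)}{x^\star - x_k}$ plus a correction $\delta \ip{d_k - \nabla_x \bbE_\Omega f(x_k,\omega)}{x^\star - x_k}$. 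Combining this correction with the gradient-error term from the first step gives $\ip{\nabla_x \bbE_\Omega f(x_k,\omega) - d_k}{\tilde s_k - x_k - \delta x^\star + \delta x_k} = \ip{\nabla_x \bbE_\Omega f(x_k,\omega) - d_k}{\tilde s_k - ((1-\delta)x_k + \delta x^\star)}$.

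Now I would apply Cauchy--Schwarz to this last inner product and observe that $(1-\delta)x_k + \delta x^\star$ is a convex combination of points of $\mathcal{X}$, hence lies in $\mathcal{X}$, so that $\norm{\tilde s_k - ((1-\delta)x_k + \delta x^\star)} \le D_{\mathcal{X}}$; this bounds the error contribution by $\norm{\nabla_x \bbE_\Omega f(x_k,\omega) - d_k}D_{\mathcal{X}}$. Finally, for the remaining $\delta(\cdots)$ term I would bound $\ip{\nabla_x \bbE_\Omega f(x_k,\omega)}{x^\star - x_k} \le f^\star - \bbE_\Omega f(x_k,\omega)$ by convexity of $\bbE_\Omega f(\,\cdot\,,\omega)$, and $\ip{A^\top \nabla g_{\beta_k}(Ax_k)}{x^\star - x_k} = \ip{\nabla g_{\beta_k}(Ax_k)}{Ax^\star - Ax_k} \le g(Ax^\star) - g_{\beta_k}(Ax_k) - \frac{\beta_k}{2}\norm{y^\ast_{\beta_k}(Ax_k)}^2$ using the smoothing inequality \eqref{eqn:smoothing-prop-2}; adding these two bounds and multiplying through by $\delta$ produces the claimed estimate.

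The main obstacle is the bookkeeping in the second step: unlike the additive case — where the gradient-error term stays paired with $\tilde s_k - x_k$ and directly yields $D_{\mathcal{X}}$ — here the factor $\delta$ in front of the substituted term forces the gradient-error inner product to be taken against $\tilde s_k - ((1-\delta)x_k + \delta x^\star)$, and one must recognize this as a difference of two points of $\mathcal{X}$ so that the diameter bound still applies with no extra $\delta$-dependent loss. The rest is a routine transcription of the additive-error argument.
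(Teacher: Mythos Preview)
Your proposal is correct and follows essentially the same route as the paper's own proof: the same add/subtract of $d_k$, the same use of \eqref{eqn:inexact_LMO_mult} followed by optimality of $s_k$ to pass to $x^\star$, the same regrouping so that the gradient-error term is paired with $\tilde s_k - ((1-\delta)x_k + \delta x^\star)$, the Cauchy--Schwarz plus convex-combination-in-$\mathcal{X}$ argument to get the $D_{\mathcal{X}}$ bound, and then convexity of $\bbE_\Omega f$ together with \eqref{eqn:smoothing-prop-2} for the bracketed term. Your identification of the convex-combination step as the only nontrivial difference from the additive case is exactly the point the paper makes as well.
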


\begin{lemma}\label{lemma:stoch_gradient_convergence_mult}
For any $k\geq 1$ the estimate of the gradient computed in Algorithm~\ref{alg:stochastic_HFW} satisfies:
\begin{align*}
\mathbb{E}\left[ \norm{\nabla_x \bbE_\Omega f(x_{k}, \omega) - d_k }^2\right] &\leq \frac{Q}{( \delta(k-1) + 9)^\frac{2}{3}}
\end{align*}
where $Q = \max\left\lbrace \norm{\nabla_x \bbE_\Omega f(x_{1}, \omega) - d_1 }^2 7^\frac{2}{3},16\sigma^2 + 81L_f^2D_\mathcal{X}^2\right\rbrace$
\begin{proof}
This lemma simply applies Lemma 1 and Lemma 17 of \cite{mokhtari2018stochastic} to our different stepsizes. We report all the steps for clarity and completeness. First, we invoke Lemma 1:
\begin{align}
\mathbb{E}\left[ \norm{\nabla_x \bbE_\Omega f(x_{k}, \omega) - d_k }^2\right] &\leq \left(1-\frac{\rho_k}{2} \right) \norm{\nabla_x \bbE_\Omega f(x_{k-1}, \omega) - d_{k-1} }^2 + \rho_k^2\sigma^2 + \frac{2L_f^2D_\mathcal{X}^2\eta_{k-1}^2}{\rho_k}\nonumber\\
&\leq \left(1-\frac{2}{(\delta(k-2)+9)^\frac{2}{3}} \right) \norm{\nabla_x \bbE_\Omega f(x_{k-1}, \omega) - d_{k-1} }^2 + \frac{16\sigma^2 + 81L_f^2D_\mathcal{X}^2}{( \delta(k-2)+ 9)^\frac{4}{3}}\label{eq_proof:lemma1_mult}
\end{align}
where we used $\rho_k = \frac{4}{(\delta(k-2)+9)^\frac{2}{3}}$.
Now, Lemma 17 of \cite{mokhtari2018stochastic} gives the following solution:
\begin{align*}
\phi_t\leq \frac{Q}{(k+k_0+1)^\alpha}
\end{align*}
 to the recursion
\begin{align*}
\phi_k \leq \left( 1-\frac{c}{(k+k_0)^\alpha}\right)\phi_{k-1} + \frac{b}{(k+k_0)^{2\alpha}}
\end{align*}
where $b\geq 0$, $c>1$, $\alpha\leq 1$, $k_0\geq 0$ and $\tilde Q := \max\left\lbrace \phi_1 k_0^\alpha,b/(c-1)\right\rbrace$ 
Applying this lemma to Equation~\eqref{eq_proof:lemma1_mult}  with $k_0 = \frac{9}{\delta} - 2$, $\alpha = \frac{2}{3}$, $c = \frac{2}{\delta^\frac{2}{3}}$, $b = \frac{16\sigma^2 + 81L_f^2D_\mathcal{X}^2}{\delta^\frac{4}{3}}$ gives:
\begin{align*}
\mathbb{E}\left[ \norm{\nabla_x\bbE_\Omega f(x_{k}, \omega) - d_k }^2\right] &\leq \frac{Q}{(\delta (k-1) +9)^\frac{2}{3}}
\end{align*}
where $ Q = \max\left\lbrace \norm{\nabla_x \bbE_\Omega f(x_{1}, \omega)- d_1 }^2 (9 - 2\delta)^\frac{2}{3},\frac{16\sigma^2 + 81L_f^2D_\mathcal{X}^2}{(2-\delta^\frac{2}{3})}\right\rbrace$
\end{proof}
\end{lemma}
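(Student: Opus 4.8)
The plan is to reuse, essentially verbatim, the argument already carried out for the exact-oracle version in Lemma~\ref{lemma:stoch_gradient_convergence}, adapting only the index arithmetic to the $\delta$-reparametrized schedules $\eta_k=\frac{9}{\delta(k-1)+9}$, $\beta_k=\frac{\beta_0}{\sqrt{\delta(k-1)+9}}$, $\rho_k=\frac{4}{(\delta(k-2)+9)^{2/3}}$. First I would invoke Lemma~1 of \cite{mokhtari2018stochastic}, which bounds the one-step progress of the running gradient estimator~\eqref{eqn:grad-est}:
\begin{equation*}
\begin{aligned}
\bbE\big[\norm{\nabla_x \bbE_\Omega f(x_k,\omega)-d_k}^2\big]
&\le\big(1-\tfrac{\rho_k}{2}\big)\bbE\big[\norm{\nabla_x \bbE_\Omega f(x_{k-1},\omega)-d_{k-1}}^2\big]\\
&\quad+\rho_k^2\sigma^2+\tfrac{2L_f^2D_\mathcal{X}^2\eta_{k-1}^2}{\rho_k}.
\end{aligned}
\end{equation*}
This inequality only uses $L_f$-smoothness of $\bbE_\Omega f$, the bounded-variance assumption, and the CGM step bound $\norm{x_k-x_{k-1}}\le\eta_{k-1}D_\mathcal{X}$, so it is insensitive to whether the \ref{eqn:lmo} is solved exactly or with multiplicative error and transfers without change.

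Next I would substitute $\rho_k$ and $\eta_{k-1}=\frac{9}{\delta(k-2)+9}$ and collapse the last two terms into a single fraction with denominator $(\delta(k-2)+9)^{4/3}$: one has $\rho_k^2\sigma^2=\frac{16\sigma^2}{(\delta(k-2)+9)^{4/3}}$ and $\frac{2L_f^2D_\mathcal{X}^2\eta_{k-1}^2}{\rho_k}=\frac{(81/2)L_f^2D_\mathcal{X}^2}{(\delta(k-2)+9)^{4/3}}$, both dominated by $\frac{16\sigma^2+81L_f^2D_\mathcal{X}^2}{(\delta(k-2)+9)^{4/3}}$. I would then rescale indices by writing $\delta(k-2)+9=\delta(k+k_0)$ with $k_0=\frac{9}{\delta}-2$, which puts the recursion for $\phi_k:=\bbE\big[\norm{\nabla_x \bbE_\Omega f(x_k,\omega)-d_k}^2\big]$ into the canonical form $\phi_k\le\big(1-\frac{c}{(k+k_0)^\alpha}\big)\phi_{k-1}+\frac{b}{(k+k_0)^{2\alpha}}$ with $\alpha=\tfrac23$, $c=2\delta^{-2/3}$, $b=(16\sigma^2+81L_f^2D_\mathcal{X}^2)\delta^{-4/3}$.

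Finally I would apply Lemma~17 of \cite{mokhtari2018stochastic}: its hypotheses $b\ge0$, $\alpha\le1$, $k_0\ge0$ are immediate, and $c>1$ holds precisely because $\delta\in(0,1]$ (the standing restriction for multiplicative error), so the lemma yields $\phi_k\le\tilde Q/(k+k_0+1)^\alpha$ with $\tilde Q=\max\{\phi_1k_0^\alpha,\,b/(c-1)\}$. Using $k+k_0+1=\frac{\delta(k-1)+9}{\delta}$ and simplifying $\tilde Q\,\delta^{2/3}$ via $k_0^{2/3}\delta^{2/3}=(9-2\delta)^{2/3}$ and $\frac{b\,\delta^{2/3}}{c-1}=\frac{16\sigma^2+81L_f^2D_\mathcal{X}^2}{2-\delta^{2/3}}$ gives the claimed bound $\frac{Q}{(\delta(k-1)+9)^{2/3}}$ with $Q=\max\{\norm{\nabla_x\bbE_\Omega f(x_1,\omega)-d_1}^2(9-2\delta)^{2/3},\,\frac{16\sigma^2+81L_f^2D_\mathcal{X}^2}{2-\delta^{2/3}}\}$ (this specializes to the expression displayed in the statement at $\delta=1$).

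I expect the only delicate point to be the index bookkeeping: the schedules $\eta,\beta,\rho$ are staggered by one step, so after the $\delta$-rescaling one must check that the single shift $k_0=9/\delta-2$ is consistent simultaneously across the contraction factor, the $\rho_k^2$ term and the $\eta_{k-1}^2/\rho_k$ term, and one must verify $c=2\delta^{-2/3}>1$, the only place where the hypothesis $\delta\le1$ is actually used. Everything else is the routine substitution already performed for the exact-oracle case in Lemma~\ref{lemma:stoch_gradient_convergence}.
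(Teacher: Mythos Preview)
Your proposal is correct and follows essentially the same route as the paper: invoke Lemma~1 of \cite{mokhtari2018stochastic}, substitute the $\delta$-reparametrized schedules to obtain the recursion with denominator $(\delta(k-2)+9)$, then apply Lemma~17 of \cite{mokhtari2018stochastic} with $k_0=\tfrac{9}{\delta}-2$, $\alpha=\tfrac23$, $c=2\delta^{-2/3}$, $b=(16\sigma^2+81L_f^2D_\mathcal{X}^2)\delta^{-4/3}$. Your observation that the resulting constant $Q=\max\{\norm{\nabla_x\bbE_\Omega f(x_1,\omega)-d_1}^2(9-2\delta)^{2/3},\,(16\sigma^2+81L_f^2D_\mathcal{X}^2)/(2-\delta^{2/3})\}$ reduces to the displayed $Q$ only at $\delta=1$ matches exactly what the paper's own proof produces (the statement and proof in the paper indeed list two slightly different $Q$'s).
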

\begin{theorem}\label{thm:composite-inexact-mult}
The sequence $x_k$ generated by Algorithm~\ref{alg:stochastic_HFW} with approximate \ref{eqn:lmo} of the form \eqref{eqn:inexact_LMO_mult}, and modifying $\eta_k = \frac{9}{\delta (k-1)+9}$, $\beta_k = \frac{\beta_0}{\sqrt{\delta (k-1)+9}}$ and $\rho_k = \frac{4}{(\delta (k-2)+9)^\frac{2}{3}}$ satisfies:
\begin{align*}
\bbE F_{\beta_k}(x_{k+1}) -F^\star 
&\leq 9^\frac{1}{3} \frac{\frac{C}{\delta} + \mathcal{E}_{1}}{(\delta (k-1)+9)^\frac{1}{3}}
\end{align*}
where $\mathcal{E}_1 := F_{\frac{\beta_0}{\sqrt{9}}}(x_1) - F^\star $.
\end{theorem}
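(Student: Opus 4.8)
The plan is to run the proof of Theorem~\ref{thm:composite} essentially unchanged, but with Lemma~\ref{lemma:linear_exact_additive} replaced by its multiplicative analogue Lemma~\ref{lemma:linear_exact_mult}, Lemma~\ref{lemma:stoch_gradient_convergence} replaced by Lemma~\ref{lemma:stoch_gradient_convergence_mult}, and the final telescoping re-tuned to the $\delta$-dilated schedule. It is convenient to set $a_k := \delta(k-1)+9$, so that $\eta_k = 9/a_k$, $\beta_k = \beta_0/\sqrt{a_k}$, $\rho_k = 4/(a_k-\delta)^{2/3}$, and $a_{k-1} = a_k - \delta$; the target bound then reads $\bbE F_{\beta_k}(x_{k+1}) - F^\star \leq 9^{1/3}(C/\delta + \mathcal{E}_1)/a_k^{1/3}$.

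First I would combine the $(L_f + \norm{A}^2/\beta_k)$-smoothness of $F_{\beta_k}$ with the update $x_{k+1} = x_k + \eta_k(\tilde s_k - x_k)$ and $\tilde s_k\in\mathcal{X}$ to get $F_{\beta_k}(x_{k+1}) \leq F_{\beta_k}(x_k) + \eta_k\ip{\nabla F_{\beta_k}(x_k)}{\tilde s_k - x_k} + \tfrac{\eta_k^2}{2}D_\mathcal{X}^2(L_f + \norm{A}^2/\beta_k)$, and then substitute Lemma~\ref{lemma:linear_exact_mult}. The one structural difference from the additive case is that the descent bracket $f^\star - \bbE_\Omega f(x_k,\omega) + g(Ax^\star) - g_{\beta_k}(Ax_k) - \tfrac{\beta_k}{2}\norm{y^\ast_{\beta_k}(Ax_k)}^2$ now carries a factor $\delta$ (and there is no extra $\delta\eta_k/2$ smoothing term). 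Using $f^\star + g(Ax^\star) = F^\star$ and $\bbE_\Omega f(x_k,\omega) + g_{\beta_k}(Ax_k) = F_{\beta_k}(x_k)$, the bracket is $F^\star - F_{\beta_k}(x_k) - \tfrac{\beta_k}{2}\norm{y^\ast_{\beta_k}(Ax_k)}^2$, so after subtracting $F^\star$ the recursion contracts with factor $(1-\eta_k\delta)$ rather than $(1-\eta_k)$:
\[
F_{\beta_k}(x_{k+1}) - F^\star \leq (1-\eta_k\delta)\big(F_{\beta_k}(x_k) - F^\star\big) - \tfrac{\eta_k\delta\beta_k}{2}\norm{y^\ast_{\beta_k}(Ax_k)}^2 + \tfrac{\eta_k^2}{2}D_\mathcal{X}^2\Big(L_f + \tfrac{\norm{A}^2}{\beta_k}\Big) + \eta_k\norm{\nabla_x\bbE_\Omega f(x_k,\omega) - d_k}D_\mathcal{X}.
\]

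Next I would use the smoothing-monotonicity bound~\eqref{eqn:smoothing-prop-3} to write $F_{\beta_k}(x_k) \leq F_{\beta_{k-1}}(x_k) + \tfrac{\beta_{k-1}-\beta_k}{2}\norm{y^\ast_{\beta_k}(Ax_k)}^2$, after which the two $\norm{y^\ast_{\beta_k}(Ax_k)}^2$ terms merge with coefficient $(1-\eta_k\delta)(\beta_{k-1}-\beta_k) - \eta_k\delta\beta_k$. The first thing to check is that this coefficient is nonpositive for the new schedule; bounding $(1-\eta_k\delta)\leq 1$ and inserting the closed forms, this reduces to $a_k \leq 9\sqrt{a_k-\delta}\,(\sqrt{a_k}+\sqrt{a_k-\delta})$, which is immediate since $\sqrt{a_k}\geq\sqrt{a_k-\delta}$, $a_k\geq 9$ and $\delta\leq 1$. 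Dropping that (now nonpositive) term, taking expectations, applying Jensen and Lemma~\ref{lemma:stoch_gradient_convergence_mult} to bound $\bbE\norm{\nabla_x\bbE_\Omega f(x_k,\omega) - d_k}\leq \sqrt{Q}/a_k^{1/3}$, and bounding the remaining $O(1/a_k^2)$ and $O(1/a_k^{3/2})$ terms by $O(1/a_k^{4/3})$ (using $a_k\geq 1$) gives the scalar recursion $\mathcal{E}_{k+1} \leq (1 - 9\delta/a_k)\,\mathcal{E}_k + C'/a_k^{4/3}$, where $\mathcal{E}_k := \bbE F_{\beta_{k-1}}(x_k) - F^\star$ ($\mathcal{E}_1$ as in the statement) and $C'$ matches the constant $C$ up to universal factors.

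The remaining, genuinely new, step is to solve this recursion. When $\delta = 1$ one has $a_k = k+8$ and the $k=1$ contraction $1-\eta_1 = 0$ annihilates the initial error, which is why $\mathcal{E}_1$ is absent from Theorem~\ref{thm:composite}; for $\delta < 1$ the schedule advances $a_k$ only by $\delta$ and $1-\eta_1\delta = 1-\delta\neq 0$, so $\mathcal{E}_1$ persists and must be carried through. I would prove by induction that $\mathcal{E}_{k+1} \leq 9^{1/3}(C/\delta + \mathcal{E}_1)/a_k^{1/3}$: the base case $k=1$ is $\mathcal{E}_2 \leq (1-\delta)\mathcal{E}_1 + C'/9^{4/3} \leq \mathcal{E}_1 + C/\delta$, and for the inductive step one uses $a_{k-1} = a_k - \delta$ together with the concavity estimate $(a_k/(a_k-\delta))^{1/3} \leq 1 + \delta/(3(a_k-\delta))$, which after clearing $a_k^{4/3}$ reduces the required inequality to $C' \leq \tfrac{26}{3}\delta M$ with $M := 9^{1/3}(C/\delta + \mathcal{E}_1) \geq 9^{1/3}C/\delta$, so that $\tfrac{26}{3}\delta M \geq \tfrac{26}{3}\,9^{1/3}C \geq C'$. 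I expect the main obstacle to be exactly this bookkeeping — placing the $1/\delta$ in the constant so that the $O(a_k^{-1/3})$ rate is preserved uniformly in $\delta\in(0,1]$ — rather than any single estimate.
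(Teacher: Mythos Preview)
Your proposal is correct and follows essentially the same route as the paper: smoothness of $F_{\beta_k}$, Lemma~\ref{lemma:linear_exact_mult} to get the $(1-\delta\eta_k)$ contraction, the smoothing-monotonicity bound~\eqref{eqn:smoothing-prop-3} and the sign check on $(1-\delta\eta_k)(\beta_{k-1}-\beta_k)-\delta\eta_k\beta_k$, then expectation, Jensen, Lemma~\ref{lemma:stoch_gradient_convergence_mult}, and an induction on the resulting scalar recursion. The only cosmetic differences are that the paper obtains $C'=C$ exactly (not merely ``up to universal factors'') and closes the induction by directly bounding $C/K^{4/3}\le 9^{1/3}\delta(C/\delta+\mathcal{E}_1)/(K(K-\delta)^{1/3})$ rather than via your concavity estimate on $(a_k/(a_k-\delta))^{1/3}$; your observation that $\mathcal{E}_1$ survives because $1-\eta_1\delta=1-\delta\neq 0$ is precisely the point.
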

\begin{proof}
First, we use the smoothness of $F_{\beta_k}$ to upper bound the progress. 
Note that $F_{\beta_k}$ is $(L_f + \norm{A}^2/\beta_k)$-smooth. 
\begin{align}\label{eq_proof:upper_bound_smooth_mult}
F_{\beta_k}(x_{k+1}) &\leq F_{\beta_k}(x_k)  +\eta_k 
\ip{\nabla F_{\beta_k}(x_k) }{\tilde s_k - x_k}
+ \frac{\eta_k^2}{2}\norm{\tilde s_k - x_k}^2(L_f + \frac{\norm{A}^2}{\beta_k}) \nonumber \\
&\leq F_{\beta_k}(x_k)  +\eta_k 
\ip{\nabla F_{\beta_k}(x_k) }{\tilde s_k - x_k}
+ \frac{\eta_k^2}{2}D_{\mathcal{X}}^2(L_f + \frac{\norm{A}^2}{\beta_k}),
\end{align}
where $\tilde s_k$ denotes the atom selected by the approximate \ref{eqn:lmo} with multiplicative accuracy, and the second inequality follows since $\tilde s_k\in\mathcal{X}$.  

Using Lemma~\ref{lemma:linear_exact_mult} we get:
\begin{align}
F_{\beta_k}(x_{k+1}) 
&\leq 
F_{\beta_k}(x_k) +\eta_k \delta \left( f^\star  - \bbE_\Omega f(x_{k}, \omega) + g(Ax^\star) - g_{\beta_k}(Ax_k) - \frac{\beta_k}{2} \norm{ \nabla y^\ast_{\beta_k}(Ax_k) }^2 \right) \nonumber \\
& \qquad + \frac{\eta_k^2}{2}D_{\mathcal{X}}^2(L_f + \frac{\norm{A}^2}{\beta_k}) + \eta_k \norm{\nabla_x \bbE_\Omega f(x_{k}, \omega)- d_k}D_\mathcal{X} \label{eqn:proof-recursion-1-mult} \\
& = 
(1 - \delta\eta_k) F_{\beta_k}(x_k) +\delta\eta_k  F^\star - \frac{\delta\eta_k \beta_k}{2} \norm{ \nabla y^\ast_{\beta_k}(Ax_k) }^2 
 \nonumber\\&\qquad + \frac{\eta_k^2}{2}D_{\mathcal{X}}^2(L_f + \frac{\norm{A}^2}{\beta_k}) + \eta_k \norm{\nabla_x \bbE_\Omega f(x_{k}, \omega) - d_k}D_\mathcal{X} . \nonumber
\end{align}

Now, using \eqref{eqn:smoothing-prop-3}, we get
\begin{align*}
F_{\beta_k}(x_k) 
& = 
\bbE_\Omega f(x_{k}, \omega) + g_{\beta_k}(Ax_k) \\
& \leq 
\bbE_\Omega f(x_{k}, \omega) + g_{\beta_{k-1}}(Ax_k) + \frac{\beta_{k-1}-\beta_k}{2}\norm{y^\ast_{\beta_k}(Ax_k)}^2 \\
& = 
F_{\beta_{k-1}}(x_k) + \frac{\beta_{k-1}-\beta_k}{2}\norm{y^\ast_{\beta_k}(Ax_k)}^2.
\end{align*}
We combine this with \eqref{eqn:proof-recursion-1-mult} and subtract $F^\star $ from both sides to get
\begin{align*}
F_{\beta_k}(x_{k+1}) - F^\star 
&\leq 
(1 - \delta\eta_k) \big( F_{\beta_{k-1}}(x_k) -  F^\star \big) + \frac{\eta_k^2}{2}D_{\mathcal{X}}^2(L_f + \frac{\norm{A}^2}{\beta_k}) \\ 
& \qquad +  \big( (1-\delta\eta_k) (\beta_{k-1} - \beta_k) - \delta\eta_k \beta_k \big) \frac{1}{2} \norm{ y^\ast_{\beta_k}(Ax_k) }^2 + \eta_k \norm{\nabla_x\bbE_\Omega f(x_{k}, \omega) - d_k}D_\mathcal{X} .
\end{align*}
Let us choose $\eta_k$ and $\beta_k$ in a way to vanish the last term. 
By choosing $\eta_k = \frac{9}{\delta(k-1)+9}$ and $\beta_k = \frac{\beta_0}{(\delta (k-1)+9)^{\frac{1}{2}}}$ for $k \geq 1$ with some $\beta_0 > 0$, we get $(1-\delta\eta_k)(\beta_{k-1}-\beta_k) - \delta\eta_k\beta_k < 0$. Hence, we end up with 
\begin{align*}
F_{\beta_k}(x_{k+1}) - F^\star 
\leq 
(1 - \delta \eta_k) \big( F_{\beta_{k-1}}(x_k) -  F^\star  \big) + \frac{\eta_k^2}{2}D_{\mathcal{X}}^2(L_f + \frac{\norm{A}^2}{\beta_k})+ \eta_k \norm{\nabla_x \bbE_\Omega f(x_{k}, \omega) - d_k}D_\mathcal{X} .
\end{align*}

We now compute the expectation, use Jensen inequality and use Lemma~\ref{lemma:stoch_gradient_convergence} to obtain the final recursion:
\begin{align*}
\mathbb{E} F_{\beta_k}(x_{k+1}) - F^\star &\leq (1 - \delta\eta_k) \big( \mathbb{E}F_{\beta_{k-1}}(x_k) - F^\star  \big) + \frac{\eta_k^2}{2}D_{\mathcal{X}}^2(L_f + \frac{\norm{A}^2}{\beta_k})  \nonumber\\&\qquad + \eta_k \mathbb{E}\norm{\nabla_x \bbE_\Omega f(x_{k}, \omega) - d_k}D_\mathcal{X}  \\
&\leq (1 - \delta\eta_k) \big( \mathbb{E}F_{\beta_{k-1}}(x_k) -  F^\star  \big) + \frac{\eta_k^2}{2}D_{\mathcal{X}}^2(L_f + \frac{\norm{A}^2}{\beta_k})  \nonumber\\&\qquad +\eta_k \sqrt{\mathbb{E}\norm{\nabla_x\bbE_\Omega f(x_{k}, \omega) - d_k}^2}D_\mathcal{X}  \\
&\leq (1 - \delta\eta_k) \big(\mathbb{E} F_{\beta_{k-1}}(x_k) -  F^\star \big) + \frac{\eta_k^2}{2}D_{\mathcal{X}}^2(L_f + \frac{\norm{A}^2}{\beta_k}) + \frac{9D_\mathcal{X}\sqrt{Q}}{(\delta(k-1)+9)^{\frac{4}{3}}}
\end{align*}

Now, note that: 
\begin{align*}
\frac{\eta_k^2}{2}D_{\mathcal{X}}^2(L_f + \frac{\norm{A}^2}{\beta_k}) &= \frac{\eta_k^2}{2}D_{\mathcal{X}}^2L_f + \frac{\eta_k^2}{2}D_{\mathcal{X}}^2\frac{\norm{A}^2}{\beta_k}\\
&=\frac{81/2}{(\delta(k-1)+9)^2}D_{\mathcal{X}}^2L_f + \frac{81/2}{(\delta(k-1)+9)^{\frac{3}{2}}}\beta_0 D_{\mathcal{X}}^2\norm{A}^2\\
&\leq\frac{81/2}{(\delta(k-1)+9)^{\frac{4}{3}}}D_{\mathcal{X}}^2L_f + \frac{81/2}{(\delta(k-1)+9)^{\frac{4}{3}}}\beta_0 D_{\mathcal{X}}^2\norm{A}^2
\end{align*}
Therefore:
\begin{align*}
\mathbb{E} F_{\beta_k}(x_{k+1}) - F^\star 
&\leq \left(1 - \frac{9\delta}{\delta(k-1)+9}\right) \big( F_{\beta_{k-1}}(x_k) -  F^\star  \big) + \frac{\frac{81}{2} D_{\mathcal{X}}^2(L_f + \beta_0\norm{A}^2) + 9D_\mathcal{X}\sqrt{Q}}{(\delta(k-1)+9)^{\frac{4}{3}}}
\end{align*}

For simplicity, let $C := \frac{81}{2}D_{\mathcal{X}}^2(L_f + \beta_0\norm{A}^2) + 9D_\mathcal{X}\sqrt{Q}$ and $\mathcal{E}_{k+1}:= \mathbb{E} F_{\beta_k}(x_{k+1}) - F^\star $. Then, we need to solve the following recursive equation:
\begin{align}\label{eq_proof:induction_rec_mult}
\mathcal{E}_{k+1}\leq \left(1 - \frac{9\delta}{\delta(k-1)+9}\right)\mathcal{E}_{k} + \frac{C}{(\delta(k-1)+9)^{\frac{4}{3}}}
\end{align}
Let the induction hypothesis for $k\geq 1$ be:
\begin{align*}
\mathcal{E}_{k+1}\leq 9^\frac{1}{3} \frac{\frac{C}{\delta} + \mathcal{E}_{1}}{(\delta (k-1)+9)^\frac{1}{3}}
\end{align*}
The base case $k=1$ is trivial as from~\eqref{eq_proof:induction_rec_mult} we have $\mathcal{E}_{2}\leq \left(1 - {\delta}\right)\mathcal{E}_{1} + \frac{C}{9^{\frac{4}{3}}}\leq \mathcal{E}_{1} + \frac{C}{9^{\frac{4}{3}}} \leq \mathcal{E}_{1} + \frac{C}{\delta}$

For simplicity let $K:= \delta (k-1)+9$.
From Equation~\eqref{eq_proof:induction_rec} we have $\mathcal{E}_{2}\leq  \frac{C}{(9)^{\frac{4}{3}}}< C$ as $9^{\frac{4}{3}}>1$
Now:
\begin{align*}
\mathcal{E}_{k+1}&\leq \left(1 - \frac{9\delta}{K}\right)\mathcal{E}_{k} + \frac{C}{(K)^{\frac{4}{3}}}\\
&\leq \left(1 - \frac{9\delta}{K}\right)9^\frac{1}{3} \frac{\frac{C}{\delta} + \mathcal{E}_{1}}{(K-\delta)^\frac{1}{3}}+ \frac{C}{(K)^{\frac{4}{3}}}\\
&\leq \left(1 - \frac{9\delta}{K}\right)9^\frac{1}{3} \frac{\frac{C}{\delta} + \mathcal{E}_{1}}{(K-\delta)^\frac{1}{3}}+ 9^\frac{1}{3}\delta\frac{\frac{C}{\delta} + \mathcal{E}_{1}}{K(K-\delta)^{\frac{1}{3}}}\\
&\leq \left(1 - \frac{8\delta}{K}\right)9^\frac{1}{3} \frac{\frac{C}{\delta} + \mathcal{E}_{1}}{(K-\delta)^\frac{1}{3}}\\
&\leq 9^\frac{1}{3} \frac{\frac{C}{\delta} + \mathcal{E}_{1}}{(K+\delta)^\frac{1}{3}}
\end{align*}
\end{proof}

\begin{repcorollary}{cor:lipschitz-inexact-mult}
Assume that $g$ is $L_g$-Lipschitz continuous. 
Then, the sequence $x_k$ generated by Algorithm~\ref{alg:stochastic_HFW} with approximate \ref{eqn:lmo} \eqref{eqn:inexact_LMO_mult}, and modifying $\eta_k = \frac{9}{\delta (k-1)+9}$, $\beta_k = \frac{\beta_0}{\sqrt{\delta (k-1)+9}}$ and $\rho_k = \frac{4}{(\delta (k-2)+9)^\frac{2}{3}}$ satisfies:\\
\begin{align*}
\bbE F(x_{k+1}) \!-\! F^\star  
\leq 9^\frac{1}{3} \frac{\frac{C}{\delta} + \mathcal{E}_{1}}{(\delta (k-1)+9)^\frac{1}{3}}\!+\! \frac{\beta_0 L_g^2}{2 \sqrt{\delta (k-1) \!+\! 9}},
\end{align*}
We can optimize $\beta_0$ from this bound if $\delta$ is known.
\begin{proof}
If $g:\R^d \to \R \cup \{+\infty\}$ is $L_g$-Lipschitz continuous we get from \eqref{eqn:smoothing-sandwich}:
\begin{align*}
g(Ax_{k+1}) &\leq g_{\beta_k}(Ax_{k+1}) + \frac{\beta_kL_g^2 }{2} \\
&= g_{\beta_k}(Ax_{k+1}) + \frac{\beta_0 L_g^2}{2\sqrt{\delta(k-1)+9}} . 
\end{align*}
We complete the proof by adding $\bbE \bbE_\Omega f(x_{k+1}, \omega) - F^\star $ to both sides:
\begin{align*}
\bbE F(x_{k+1}, \omega) - F^\star 
&\leq 
\bbE F_{\beta_k}(x_{k+1}) - F^\star  + \frac{\beta_0 L_g^2}{2\sqrt{k+8}}\\
&\leq 
9^\frac{1}{3} \frac{\frac{C}{\delta} + \mathcal{E}_{1}}{(\delta (k-1)+9)^\frac{1}{3}} + \frac{\beta_0 L_g^2}{2\sqrt{\delta(k-1)+9}}.
\end{align*}
\end{proof}
\end{repcorollary}

\begin{repcorollary}{thm:indicator-inexact-mult}
Assume that $g$ is the indicator function of a simple convex set $\mathcal{K}$. 
Then, the sequence $x_k$ generated by Algorithm~\ref{alg:stochastic_HFW} with approximate \ref{eqn:lmo} \eqref{eqn:inexact_LMO_mult}, and modifying $\eta_k = \frac{9}{\delta (k-1)+9}$, $\beta_k = \frac{\beta_0}{\sqrt{\delta (k-1)+9}}$ and $\rho_k = \frac{4}{(\delta (k-2)+9)^\frac{2}{3}}$ satisfies:
\begin{align*}
\bbE \bbE_\Omega f(x_{k+1}, \omega) - f^\star  & \geq -\norm{y^\star} ~ \bbE \mathrm{dist}(Ax_{k+1},\mathcal{K}) \\
\bbE \bbE_\Omega f(x_{k+1}, \omega) - f^\star  & \leq  9^\frac{1}{3} \frac{\frac{C}{\delta} + \mathcal{E}_{1}}{(\delta (k-1)+9)^\frac{1}{3}} \\
\bbE \mathrm{dist}(Ax_{k+1}, \mathcal{K}) & \leq \frac{2 \beta_0 \norm{y^\star} }{\sqrt{\delta(k-1)+9}} +\frac{2\sqrt{2\cdot9^\frac13 (\frac{C}{\delta} +\mathcal{E}_1)\beta_0}}{(\delta(k-1)+9)^{\frac{5}{12}}}
\end{align*}
\begin{proof}
We adapt to our rate the proof technique of Theorem 4.3 in \cite{yurtsever2018conditional}.
From the Lagrange saddle point theory, we know that the following bound holds $\forall x \in \mathcal{X}$ and $\forall r \in \mathcal{K}$:
\begin{align*}
f^\star  \leq \mathcal{L}(x,r,y^\star) &= \bbE_\Omega f(x, \omega) + \ip{y_\star}{Ax - r} \\
&\leq  \bbE_\Omega f(x, \omega) + \norm{y_\star}\norm{Ax - r},
\end{align*}
Since $x_{k+1} \in \mathcal{X}$, we get after taking the expectation
\begin{align}
\bbE \bbE_\Omega f(x_{k+1}, \omega) - f^\star  &\geq - \bbE \min_{r\in\mathcal{K}}\norm{y^\star}\norm{Ax_{k+1} - r} \nonumber\\
&=  - \norm{y^\star}\bbE \mathrm{dist}(Ax_{k+1}, \mathcal{K}). \label{eqn:obj-lower-bound-mult}
\end{align}
This proves the first bound in Corollary~\ref{cor:indicator-exact}.

The second bound directly follows by Theorem~\ref{thm:composite} as
\begin{align*}
\bbE \bbE_\Omega f(x_{k+1}, \omega) - f^\star  &\leq \bbE \bbE_\Omega f(x_{k+1}, \omega) - f^\star  + \frac{1}{2\beta_k} \bbE \left[\mathrm{dist}(Ax_{k+1}, \mathcal{K})\right]^2\\
&\leq\bbE  F_{\beta_k}(x_{k+1}) - F^\star \\
&\leq  9^\frac{1}{3} \frac{\frac{C}{\delta} + \mathcal{E}_{1}}{(\delta (k-1)+9)^\frac{1}{3}}.
\end{align*}
Now, we combine this with \eqref{eqn:obj-lower-bound-mult}, and we get
\begin{align*}
- \norm{y^\star}\bbE \mathrm{dist}(Ax_{k+1}, \mathcal{K}) + \frac{1}{2\beta_k} \bbE \left[\mathrm{dist}(Ax_{k+1}, \mathcal{K}) \right]^2
&\leq 9^\frac{1}{3} \frac{\frac{C}{\delta} + \mathcal{E}_{1}}{(\delta (k-1)+9)^\frac{1}{3}}\\
\end{align*}
This is a second order inequality in terms of $\bbE \mathrm{dist}(Ax_k, \mathcal{K})$. Solving this inequality, we get
\begin{align*}
\bbE \mathrm{dist}(Ax_{k+1}, \mathcal{K}) & \leq \frac{2 \beta_0 \norm{y^\star} }{\sqrt{\delta(k-1)+9}} +\frac{2\sqrt{2\cdot9^\frac13 (\frac{C}{\delta} +\mathcal{E}_1)\beta_0}}{(\delta(k-1)+9)^{\frac{5}{12}}}.
\end{align*}
\end{proof}
\end{repcorollary}

\end{document}